\newtheorem{thm}{Theorem}[section]
\newtheorem{lemma}[thm]{Lemma}
\theoremstyle{plain}
\numberwithin{equation}{section}
\theoremstyle{remark}
\newcommand{\Comment}[1]{}
\newcommand{\rbr}[1]{\left( {#1} \right)}
\newcommand{\cbr}[1]{\left\{ {#1} \right\}}
\newcommand{\norm}[1]{\left\|#1\right\|}
\newcommand{\eq}[1]{(\ref{#1})}
\newcommand{\bcbr}[1]{\Bigg\{ {#1} \Bigg\}}
\def\CC{\mathbb{C}}
\def\RR{\mathbb{R}}
\def\ZZ{\mathbb{Z}}
\def\NN{\mathbb{N}}
\begin{document}
\title{Explicit Salem sets and applications to metrical Diophantine approximation}
\author{Kyle Hambrook}
%\date{May 29, 2014}

\begin{abstract}
Let $Q$ be an infinite subset of $\ZZ$, let $\Psi: \ZZ \rightarrow [0,\infty)$ be positive on $Q$, and let $\theta \in \RR$.
Define 
$$
E(Q,\Psi,\theta) = \{ x \in \RR : \| q x - \theta \| \leq \Psi(q) \text{ for infinitely many $q \in Q$} \}.
$$
We prove a lower bound on the Fourier dimension of $E(Q,\Psi,\theta)$. This generalizes theorems of Kaufman and Bluhm and yields new explicit examples of Salem sets. We give applications to metrical Diophantine approximation, including determining the Hausdorff dimension of $E(Q,\Psi,\theta)$ in new cases. We also prove a higher-dimensional analog of our result.
\end{abstract}

\maketitle

\section{Main Result}\label{intro}

For $x \in \RR^d$, write $|x| = \max_{1 \leq i \leq d} |x_i|$ and $|x|_2 = (\sum_{i=1}^{d} |x_i|^2)^{1/2}$. 
For $x \in \RR$, $\norm{x} = \min_{k \in \ZZ}|x-k|$ is the distance from $x$ to the nearest integer. 
If $A$ is a finite set, $|A|$ is the cardinality of $A$. 
The expression $X \lesssim Y$ stands for ``there is a constant $C > 0$ such that $X \leq CY$.'' 
The expression $X \gtrsim Y$ is analogous.
The expression $X \approx Y$ means ``there are constants $C > c > 0$ such that $cY \leq X \leq CY$.'' 
%If there is a constant $C > 0$ such that $|f(x)| \leq Cg(x)$ for all sufficiently large $x \in \RR^n$, we write $f(x) = O(g(x))$. 
%If $\lim_{x \rightarrow \infty}(|f(x)|/g(x)) =  0$, we write $f(x) = o(g(x))$.

Let $Q$ be an infinite subset of $\ZZ$, let $\Psi: \ZZ \rightarrow [0,\infty)$ be a function with $\Psi(q) > 0$ for all $q \in Q$, and let $\theta \in \RR$.
Define $E(Q,\Psi,\theta)$ to be the set of all $x \in \RR$ such that
$$
\| q x - \theta \| \leq \Psi(q) \text{ for infinitely many $q \in Q$}.
$$
We will always assume $\Psi$ is bounded. Since $\norm{x} \leq 1/2$ for all $x \in \RR$, assuming $\Psi$ is bounded results in no loss of generality. We will also always assume $\Psi(0)=1$. This assumption is imposed only to avoid tedious notation. Since redefining $\Psi$ at finitely many points does not change the set $E(Q,\Psi,\theta)$, assuming $\Psi(0)=1$ results in no loss of generality. 

For $M > 0$, define 
\begin{align*}
Q(M) &= \cbr{q \in Q : M/2 < |q| \leq M }, \\
\epsilon(M) &= \min_{q \in Q(M)} \Psi(q).
\end{align*}
The main result of this paper is the following theorem.

\begin{thm}\label{main-thm-1}
Suppose there is a number $a \geq 0$, an increasing function \mbox{$h:(0,\infty) \rightarrow (0,\infty)$}, and an unbounded set $\mathcal{M} \subseteq (0,\infty)$ such that
\begin{align}\label{main-thm-1 e1}
|Q(M)|\epsilon(M)^{a} h(M) \geq M^{a} \quad \forall M \in \mathcal{M}.
\end{align}
Then there is a Borel probability measure $\mu$ supported on $E(Q,\Psi,\theta)$ such that
\begin{align}\label{main-thm-1 e2}
|\widehat{\mu}(\xi)| \lesssim |\xi|^{-a} \exp\rbr{\frac{\ln |\xi|}{\ln \ln |\xi|}} h(4|\xi|) \quad \forall \xi \in \RR, |\xi| > e.
\end{align}
%Moreover, if 
%$$
%\lim_{x \rightarrow \infty} \frac{\ln h(x)}{\ln x} = 0,
%$$
%then $E(Q,\Psi,\theta)$ is a Salem set of dimension
%$$
%\dim_{H} E(Q,\Psi,\theta) = \dim_{F} E(Q,\Psi,\theta) = \min\cbr{2a,1}.
%$$
\end{thm}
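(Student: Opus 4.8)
The plan is to run the Kaufman--Bluhm construction: for each scale $M$ allowed by \eqref{main-thm-1 e1} I build a smooth probability density on $\TT=\RR/\ZZ$ whose mass sits on the $M$-th approximation set and whose nonzero Fourier coefficients are supported above $M/2$ and decay like $|n|^{-a}$, and I then multiply together a rapidly lacunary subsequence of these densities to get a measure on $E(Q,\Psi,\theta)$ with the required decay. Writing $e(t)=e^{2\pi i t}$: fix $\phi\in C_c^\infty(-1,1)$ with $\phi\ge 0$, $\int\phi=1$, so $\widehat\phi$ is Schwartz with $\widehat\phi(0)=1$. For $q\in\ZZ\setminus\{0\}$ put $g_q(x)=\sum_{k\in\ZZ}\Psi(q)^{-1}\phi\rbr{\tfrac{qx-k-\theta}{\Psi(q)}}$; since we may assume $\Psi(q)\le 1/2$, this is a $|q|^{-1}$-periodic probability density supported on $\cbr{x\in\TT:\norm{qx-\theta}\le\Psi(q)}$, with $\widehat{g_q}(n)=0$ unless $|q|\mid n$ and $\abs{\widehat{g_q}(n)}=\abs{\widehat\phi\rbr{\tfrac{n}{|q|}\Psi(q)}}$ otherwise. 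For $M\in\mathcal{M}$ set $f_M=|Q(M)|^{-1}\sum_{q\in Q(M)}g_q$, a probability density supported on $E_M:=\cbr{x\in\TT:\norm{qx-\theta}\le\Psi(q)\text{ for some }q\in Q(M)}$, with $\widehat{f_M}(n)=0$ for $0<|n|\le M/2$. Using that $\widehat{f_M}(n)$ is a sum of at most $d(n)$ (the number of divisors of $n$) terms of size $|Q(M)|^{-1}\abs{\widehat\phi(\tfrac n{|q|}\Psi(q))}$ with $\tfrac{|m|}{|q|}\Psi(q)\ge\tfrac{|m|}{M}\epsilon(M)$, the Schwartz decay of $\widehat\phi$ together with \eqref{main-thm-1 e1} gives, for every $K\ge 0$, all $n\ne 0$ and all $M\in\mathcal{M}$,
\[
\abs{\widehat{f_M}(n)}\;\lesssim_K\; d(n)\,h(M)\,|n|^{-a}\,\rbr{1+\tfrac{|n|}{M}\epsilon(M)}^{-K}.
\]
Since $\widehat{f_M}(n)\ne 0$ forces some $q\in Q(M)$ with $|q|\mid n$, hence $M<2|n|$, taking $K=0$ yields in particular $\abs{\widehat{f_M}(n)}\lesssim d(n)\,h(2|n|)\,|n|^{-a}$.

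Next I choose recursively $M_1<M_2<\cdots$ in $\mathcal{M}$ with $M_{j+1}>2M_j$ and with $M_{j+1}$ large enough --- possible since $\mathcal{M}$ is unbounded and $F_j:=f_{M_1}\cdots f_{M_j}$ is, at each stage, an already fixed smooth density with rapidly decaying coefficients --- that $\sum_{|n|>M_{j+1}/2}\abs{\widehat{F_j}(n)}<4^{-j}$, and large enough for the tail estimate below. Each $F_j\,dx$ is a probability measure on the decreasing compact sets $\bigcap_{i\le j}E_{M_i}$. Expanding $F_j=F_{j-1}\rbr{1+(f_{M_j}-1)}$ and applying Parseval together with the vanishing of the coefficients of $f_{M_j}-1$ below $M_j/2$ gives $\abs{\int F_j-\int F_{j-1}}<4^{-(j-1)}$, so $\int F_j\to c\in(2/3,2)$; likewise, for each fixed $n$, $\widehat{F_j}(n)$ is Cauchy. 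Hence $F_j\,dx$ converges weak-$*$ to $c\mu$ for a Borel probability measure $\mu$, and by the portmanteau theorem $\supp\mu\subseteq\bigcap_j E_{M_j}$. A point of $\bigcap_j E_{M_j}$ satisfies $\norm{q_jx-\theta}\le\Psi(q_j)$ for some $q_j\in Q(M_j)$ for every $j$; since $|q_j|>M_j/2\to\infty$ these are infinitely many distinct integers, so $\supp\mu\subseteq E(Q,\Psi,\theta)$.

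It remains to bound $\widehat\mu(\xi)$. For $n\ne 0$ let $k$ be the largest index with $M_k<2|n|$. From $\widehat\mu(n)=c^{-1}\lim_m(\widehat{f_{M_1}}*\cdots*\widehat{f_{M_m}})(n)$ (a sum over tuples $(m_i)$ with $\sum m_i=n$ and all but finitely many $m_i=0$) and the support of each $\widehat{f_{M_i}}$ in $\{0\}\cup\{|\cdot|>M_i/2\}$, with $M_i/2\ge M_{k+1}/2\ge|n|$ for $i>k$, the sum splits into a main part, in which every index $i>k$ contributes frequency $0$; this part equals $c^{-1}\widehat{F_k}(n)$, and a further expansion of $\widehat{F_k}$ isolates $c^{-1}\widehat{f_{M_k}}(n)$ --- the other factors contributing $\widehat{f_{M_i}}(0)=1$ --- which by the single-scale bound is $\lesssim d(n)\,h(2|n|)\,|n|^{-a}$, plus $O(k)$ further terms of the same type from stages $i<k$. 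The remaining (tail) part consists of tuples with at least two nonzero entries; each such product carries a factor $\widehat{f_{M_i}}(m_i)$ with $|m_i|$ large compared with $|n|$ and with $M_i$, on which the refined single-scale bound forces very rapid smallness, and the supports of the $\widehat{f_{M_i}}$ are sparse (unions of $\le M_i$ progressions of gap $>M_i/2$, their density controlled through \eqref{main-thm-1 e1}); with the $M_j$ grown fast enough in the previous step these terms sum to $\le |n|^{-a}$, dominated by the leading term. Since $k\lesssim\ln|n|$ and $d(n)\le\exp(\ln|n|/\ln\ln|n|)$ for large $|n|$, this yields $\abs{\widehat\mu(n)}\lesssim |n|^{-a}\exp(\ln|n|/\ln\ln|n|)\,h(2|n|)$ for all $n\ne 0$. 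Finally, since $E(Q,\Psi,\theta)$ is $1$-periodic I translate so that $\supp\mu$ lies in an open unit interval, fix $\chi\in C_c^\infty$ of that interval with $\chi\equiv 1$ on $\supp\mu$, and write $\widehat\mu(\xi)=\int\chi(x)e(-\xi x)\,d\mu(x)=\sum_{n\in\ZZ}\widehat\chi(\xi+n)\,\widehat\mu(n)$; splitting the sum at $|\xi+n|=|\xi|/2$ and using the rapid decay of $\widehat\chi$ on the far part and the coefficient bound (with $|n|\le 2|\xi|$, so $h(2|n|)\le h(4|\xi|)$) on the near part gives \eqref{main-thm-1 e2}.

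The routine parts are the single-scale estimate (bookkeeping with $\widehat\phi$ and the divisor count), the weak-$*$ convergence, and the transfer to $\RR$. The main obstacle is the tail estimate in the last paragraph: showing that the tuples with two or more nonzero frequencies contribute only $o(|n|^{-a})$ requires combining the rapid decay of $\widehat\phi$ with the sparsity of $\supp\widehat{f_{M_i}}$ (quantified via $|Q(M_i)|$, $M_i$ and $\epsilon(M_i)$ through \eqref{main-thm-1 e1}), and choosing the lacunarity of $(M_j)$ quantitatively against $\Psi$, $a$ and the earlier scales; this is delicate precisely because $\epsilon(M)$ may decay erratically, so the bound $|\widehat{f_M}(n)|\lesssim d(n)h(2|n|)|n|^{-a}$ alone is not enough and the $K$-dependent refinement above must be used throughout the convolution.
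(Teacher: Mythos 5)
Your construction of the densities $f_M$, the single-scale Fourier decay estimate via Wigert's divisor bound and \eqref{main-thm-1 e1}, the lacunary choice of scales, and the identification of the support all match the paper's proof (minor differences: you scale each $g_q$ by $\Psi(q)$ rather than the uniform $\epsilon(M)$, and you work on $\TT$ rather than with a compactly supported cutoff from the start). Where you diverge is in how you pass from the single-scale estimate to the bound on $\widehat\mu$. You propose to expand $\widehat\mu(n)$ as the infinite convolution $\widehat{f_{M_1}}\ast\widehat{f_{M_2}}\ast\cdots$, isolate the ``main'' tuples with exactly one nonzero entry, and argue that tuples with two or more nonzero entries contribute a negligible tail. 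You yourself flag this tail estimate as the main obstacle and leave it as a sketch, and that is the genuine gap: one must sum a multi-index sum over the sparse lattices $\supp\widehat{f_{M_i}}$ with weights $(1+|m_i|\epsilon(M_i)/M_i)^{-K}$ that begin to decay only past the threshold $M_i/\epsilon(M_i)$, which can be astronomically larger than $M_i$ since $\epsilon(M_i)$ is arbitrary; one would have to tie the growth of $M_{i+1}$ quantitatively to the entire history $M_1,\epsilon(M_1),\dots,M_i,\epsilon(M_i)$, and also fold the cutoff $\chi$ into the induction. None of this is spelled out, and it is not clear the bookkeeping closes.

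The paper avoids the convolution combinatorics entirely with a one-step perturbation lemma (Lemma~\ref{main-lemma}): writing $g(\xi)$ for the target decay profile, for every $\delta>0$, every $M_0$, and every $\chi\in C_c^K(\RR^{mn})$, there is $M_*\in\mathcal{M}$ with $M_*\geq M_0$ and $|\widehat{\chi F_{M_*}}(\xi)-\widehat\chi(\xi)|\leq\delta\, g(\xi)$ for all $\xi$. The crucial feature is that the error is measured against the \emph{shape} $g(\xi)$ rather than absolutely. Applying this recursively with $\chi=\chi_0 F_{M_1}\cdots F_{M_{k-1}}$ and $\delta=2^{-k-1}$ gives $|\widehat{\mu_k}-\widehat{\mu_{k-1}}|\leq 2^{-k-1}g$, and the telescoping series converges to a measure with $|\widehat\mu|\lesssim g$; tuples with several nonzero frequencies never appear, because each step is only a single split of the form $|\ell|\leq|\xi|/2$ versus $|\ell|>|\xi|/2$, exactly the split you use in your final transfer paragraph. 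Working with the cutoff $\chi_0$ on $\RR$ from the start also removes your torus-to-line step: ``translate so that $\supp\mu$ lies in an open unit interval'' is not justified, since $\supp\mu\subseteq\bigcap_j E_{M_j}$ need not fit inside an interval of length less than one. Finally, a small slip: $F_j\,dx$ with $F_j=f_{M_1}\cdots f_{M_j}$ is \emph{not} a probability measure (products of probability densities need not integrate to one); your later argument that $\int F_j\to c\in(2/3,2)$ is correct and already contradicts the earlier claim.
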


We also have a higher-dimensional version of Theorem \ref{main-thm-1}. 

Let $m, n \in \NN$, let $Q$ be an infinite subset of $\ZZ^n$, let $\Psi: \ZZ^n \rightarrow [0,\infty)$ be a function with $\Psi(q) > 0$ for all $q \in Q$, and let $\theta \in \RR^m$. 
%We will always assume $\Psi(0)=1$; this assumption results in no loss of generality and is imposed only to avoid tedious notation. 
Define $E(m,n,Q,\Psi,\theta)$ to be the set of all points 
$$
(x_{11}, \ldots, x_{1n}, \ldots, x_{m1}, \ldots, x_{mn}) \in \RR^{mn}
$$ 
such that
$$
\max_{1 \leq i \leq m} \| \sum_{j=1}^{n} q_j x_{ij}-\theta_i \| \leq \Psi(q) \text{ for infinitely many $q \in Q$}.
$$
Clearly $E(1,1,Q,\Psi,\theta) = E(Q,\Psi,\theta)$. As above, we will always assume $\Psi$ is bounded and $\Psi(0)=1$, and these assumptions result in no loss of generality.

For $M > 0$, define 
\begin{align*}
Q(M) &= \cbr{q \in Q : M/2 < |q_j| \leq M \quad \forall 1 \leq j \leq n }, \\
\epsilon(M) &= \min_{q \in Q(M)} \Psi(q).
\end{align*}

\begin{thm}\label{main-thm-2}
Suppose there is a number $a \geq 0$, an increasing function \mbox{$h:(0,\infty) \rightarrow (0,\infty)$}, and an unbounded set $\mathcal{M} \subseteq (0,\infty)$ such that
\begin{align}\label{main-thm-2 e1}
|Q(M)|\epsilon(M)^{a} h(M) \geq M^{a} \quad \forall M \in \mathcal{M}.
\end{align}
Then there is a Borel probability measure $\mu$ supported on $E(m,n,Q,\Psi,\theta)$ such that
\begin{align}\label{main-thm-2 e2}
|\widehat{\mu}(\xi)| \lesssim |\xi|^{-a} \exp\rbr{\frac{\ln |\xi|}{\ln \ln |\xi|}} h(4|\xi|) \quad \forall \xi \in \RR^{mn}, |\xi| > e.
\end{align}
\end{thm}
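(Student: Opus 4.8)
The plan is to carry out the Cantor-type construction of Kaufman, in the streamlined form of Bluhm, for systems of $m$ linear forms in $n$ variables; since $E(1,1,Q,\Psi,\theta)=E(Q,\Psi,\theta)$, this also proves Theorem~\ref{main-thm-1}. Because \eqref{main-thm-2 e1} holds on the unbounded set $\mathcal M$, I first fix a sequence $M_1<M_2<\cdots$ in $\mathcal M$ growing as rapidly as needed; it will suffice that $M_{r+1}\ge(M_r/\epsilon(M_r))^{2}$ for every $r$, which is possible because $Q(M_r)\ne\varnothing$, hence $\epsilon(M_r)>0$, by \eqref{main-thm-2 e1}. Fix $\phi\in C_c^\infty(\RR)$ with $\phi\ge0$, $\supp\phi\subseteq[-1,1]$, $\int\phi=1$, and $f_0\in C_c^\infty(\RR^{mn})$ with $f_0\ge0$, $\supp f_0\subseteq[0,1]^{mn}$, $\int f_0=1$. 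Writing $\epsilon_r=\epsilon(M_r)$, set
\[
g_r(x)=\frac{1}{\epsilon_r^{\,m}\,|Q(M_r)|}\sum_{q\in Q(M_r)}\ \prod_{i=1}^{m}\ \sum_{p\in\ZZ}\phi\!\rbr{\frac{\sum_{j=1}^{n}q_jx_{ij}-\theta_i-p}{\epsilon_r}},\qquad f_r=f_0\cdot g_1\cdots g_r .
\]
Then $g_r\ge0$, and since $\supp\phi\subseteq[-1,1]$ and $\epsilon_r\le1/2$ its support is $\{x:\max_i\|\sum_jq_jx_{ij}-\theta_i\|\le\epsilon_r\text{ for some }q\in Q(M_r)\}$; note the product over the $m$ forms sits inside the sum over $q$, so points of $\supp g_r$ are simultaneously approximated by one common $q\in Q(M_r)$ in all $m$ forms.

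A routine computation — the inner $p$-sum has mean $\epsilon_r$ in the variable $\sum_jq_jx_{ij}$, and the pushforward of Lebesgue measure on $[0,1]^n$ to that variable is slowly varying on the scale $1\ll M_r$ on which $\phi$ oscillates — gives $\int_{[0,1]^{mn}}g_r\approx1$ and, if the $M_r$ grow fast enough, $\|f_r\|_1\approx1$ uniformly in $r$; put $\mu_r=f_r/\|f_r\|_1$. Each $\mu_r$ is a Borel probability measure supported in $[0,1]^{mn}\cap\bigcap_{l\le r}\supp g_l$, and along a subsequence $\mu_r\to\mu$ weak-$*$ with $\mu$ a Borel probability measure. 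Since each $\supp g_l$ is closed and $\mu_r(\supp g_l)=1$ for $r\ge l$, the portmanteau theorem gives $\mu(\supp g_l)=1$, hence $\supp\mu\subseteq\bigcap_l\supp g_l$. If $x$ lies in this intersection then for every $l$ there is $q^{(l)}\in Q(M_l)$ with $\max_i\|\sum_jq^{(l)}_jx_{ij}-\theta_i\|\le\epsilon_l\le\Psi(q^{(l)})$, and these are infinitely many distinct elements of $Q$ because $|q^{(l)}|>M_l/2\to\infty$; so $\supp\mu\subseteq E(m,n,Q,\Psi,\theta)$.

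For the decay estimate, since $\mu$ is compactly supported and $\|f_r\|_1\approx1$, it is enough to bound $|\widehat f_r(\xi)|$ by a constant times the right side of \eqref{main-thm-2 e2}, uniformly in $r$. Poisson summation in $\sum_jq_jx_{ij}$ turns the $i$-th inner sum into $\sum_{t_i\in\ZZ}\epsilon_r\widehat\phi(\epsilon_rt_i)e^{-2\pi it_i\theta_i}e^{2\pi it_i\sum_jq_jx_{ij}}$, so $\widehat g_r$ is the discrete measure carrying mass $\tfrac1{|Q(M_r)|}\prod_i\widehat\phi(\epsilon_rt_i)e^{-2\pi it_i\theta_i}$ at the frequency $(t_1q,\dots,t_mq)\in(\RR^n)^m$, for $q\in Q(M_r)$ and $(t_1,\dots,t_m)\in\ZZ^m$, with mass exactly $1$ at the origin. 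Hence $\widehat f_r=\widehat f_0*\widehat g_1*\cdots*\widehat g_r$ is a sum over all ways of choosing, at each stage $l\le r$, either the origin (factor $1$) or a nonzero frequency $\zeta^{(l)}$, weighted by the corresponding product of masses and by $\widehat f_0\!\bigl(\xi-\sum_l\zeta^{(l)}\bigr)$. Fix $\xi$ with $|\xi|>e$ (the range $|\xi|<M_1$ is trivial because $|\widehat\mu|\le1$ and the right side of \eqref{main-thm-2 e2} is bounded below there), write $\xi=(\xi_1,\dots,\xi_m)$ with $\xi_i\in\RR^n$, and let $r$ be defined by $M_r\le|\xi|<M_{r+1}$. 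Stages $l>r$ must contribute $\zeta^{(l)}=0$ since $|\zeta^{(l)}|\ge M_l>|\xi|$ makes $\widehat f_0(\xi-\sum\zeta)$ negligible; the all-origin term is $\widehat f_0(\xi)\lesssim_N|\xi|^{-N}$. A term with a single nonzero $\zeta^{(l)}=(t_1q,\dots,t_mq)$, $q\in Q(M_l)$, is non-negligible only if $\xi_i\approx t_iq$ in $\RR^n$ for every $i$; choosing $i_0$ with $|\xi_{i_0}|=|\xi|$ forces $|t_{i_0}|\approx|\xi|/M_l$, and the number of integers $t$ of that size for which $\xi_{i_0}/t$ lies within $O(1/|t|)$ of $\ZZ^n$ is, up to a constant, at most the largest number of divisors of an integer $\le C|\xi|$, hence $\lesssim\exp(\ln|\xi|/\ln\ln|\xi|)$; each such $t$ pins $q$, and then $t_1,\dots,t_m$, to $O(1)$ choices. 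Using \eqref{main-thm-2 e1} in the form $|Q(M_l)|^{-1}\le\epsilon_l^{a}h(M_l)M_l^{-a}$, such a term is
\[
\lesssim\ \frac{\exp(\ln|\xi|/\ln\ln|\xi|)}{|Q(M_l)|}\,\bigl|\widehat\phi(\epsilon_l|\xi|/M_l)\bigr|\ \lesssim\ \exp(\ln|\xi|/\ln\ln|\xi|)\,h(M_l)\,|\xi|^{-a}\min(u_l^{a},u_l^{a-N}),\qquad u_l:=\frac{\epsilon_l|\xi|}{M_l}.
\]
For $l=r$ one has $\min(u_r^a,u_r^{a-N})\le1$ (any $N>a$) and $h(M_r)\le h(4|\xi|)$, so this is within \eqref{main-thm-2 e2}; for $l<r$, $u_l\ge\epsilon_lM_r/M_l\ge M_l/\epsilon_l\ge1$, and the bound becomes $\exp(\ln|\xi|/\ln\ln|\xi|)\,h(M_l)\,(M_l/\epsilon_l)^{N-a}|\xi|^{-N}\lesssim\exp(\ln|\xi|/\ln\ln|\xi|)\,h(4|\xi|)\,|\xi|^{-(N+a)/2}$, using $M_r\ge(M_l/\epsilon_l)^{2}$. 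Terms with two or more nonzero $\zeta^{(l)}$ carry an extra $\widehat\phi$-factor per stage and obey stronger constraints, and are handled the same way. Summing over the $O(\log\log|\xi|)$ relevant stages (here $M_r\le|\xi|$ forces $r\lesssim\log\log|\xi|$) and over the $O(1)$ nearby $\zeta$ in each term gives $|\widehat f_r(\xi)|\lesssim|\xi|^{-a}\exp(\ln|\xi|/\ln\ln|\xi|)\,h(4|\xi|)$, which is \eqref{main-thm-2 e2}.

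The substantive step — and the main obstacle — is this last Fourier estimate: expanding $\widehat f_0*\widehat g_1*\cdots*\widehat g_r$, isolating the stage relevant to each $\xi$, bounding the number of admissible tuples $(q,t_1,\dots,t_m)$ by a divisor estimate (the source of the factor $\exp(\ln|\xi|/\ln\ln|\xi|)$, and essentially the only place the move from $\ZZ$ to $\ZZ^n$ matters), and checking that every remaining term is killed once $(M_r)$ is chosen to grow fast enough and $N$ is taken large. The normalisation, the weak-$*$ limit, and the inclusion $\supp\mu\subseteq E(m,n,Q,\Psi,\theta)$ are routine by comparison.
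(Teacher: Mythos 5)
Your overall strategy --- Cantor-type construction, Poisson summation to expose the Fourier support of each factor $g_r$ on the lattice $\{(t_1q,\dots,t_mq)\}$, a divisor bound for the number of admissible $(q,t)$, and rapid growth of $(M_r)$ --- is exactly the one the paper uses. The paper's implementation differs in two structural choices that sidestep the places where your write-up hand-waves. First, the paper makes each $F_M$ a periodic $C^K$ function (with a single bump $\phi:\RR^m\to\RR$), so $\widehat{F_M}$ lives exactly on $\ZZ^{mn}$, the set $D(\ell)$ is defined by exact divisibility, and Wigert's bound gives $|Q(M)\cap D(\ell)|\le 2\tau(|\ell|)$ cleanly; your version works with real $\xi$ and ``approximate divisibility,'' which is fixable but not actually carried out. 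Second, and more importantly, the paper does not expand the full convolution $\widehat{f_0}*\widehat{g_1}*\cdots*\widehat{g_r}$ at once: it proves an iterative lemma (its Lemma~\ref{main-lemma}) asserting that for any compactly supported $C^K$ density $\chi$ one can pick $M_*\in\mathcal M$ large enough that $|\widehat{\chi F_{M_*}}(\xi)-\widehat{\chi}(\xi)|\le\delta g(\xi)$, and then builds $M_k$ recursively with $\chi=\chi_0F_{M_1}\cdots F_{M_{k-1}}$. This one-step-at-a-time structure is what makes the argument close.

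The genuine gap in your proposal is the treatment of the ``two or more nonzero $\zeta^{(l)}$'' terms, which you dismiss as obeying ``stronger constraints.'' They do not obviously obey stronger constraints: if $\zeta^{(l_1)}$ and $\zeta^{(l_2)}$ nearly cancel, the resulting frequency can land near any $\xi$, so the constraint on each factor individually is weaker, not stronger. Controlling this requires (i) using the rapid growth of $M_l$ to show that the last nonzero stage dominates the sum of frequencies, and (ii) summing the contributions from the earlier nonzero stages, whose total mass is \emph{not} $O(1)$ (the total weight of the discrete measure $\widehat{g_l}$ on nonzero frequencies is of order $\epsilon_l^{-m}$, not bounded). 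The iterative lemma handles this by absorbing all prior stages into $\chi$ and only controlling one new factor at a time; without it, your sum over $(\zeta^{(l_1)},\dots,\zeta^{(l_{k-1})})$ is unbounded unless you also invoke the decay of $\widehat{f_{0}}$ at each intermediate step, which is precisely the iterative structure you haven't set up. Relatedly, the fixed explicit growth $M_{r+1}\ge(M_r/\epsilon_r)^2$ is almost certainly insufficient; the paper deliberately lets $M_k$ depend on the (unquantified) constants coming from $\chi_0 F_{M_1}\cdots F_{M_{k-1}}$, and you should do the same rather than commit to a closed-form growth rate. The assertion $\|f_r\|_1\approx 1$ also needs to be derived from the Fourier estimate at $\xi=0$ (the paper gets $\widehat\mu(0)\in[1/2,3/2]$ this way), not from a heuristic about the pushforward of Lebesgue measure.
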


%Definitions of the possibly unfamilar concepts and notation in this theorem are presented in the next section.

%Sections \ref{motivation-1} and \ref{motivation-2} discuss motivations for Theorems \ref{main-thm-1} and \ref{main-thm-2} and give definitions of the possibly unfamilar concepts and notation appearing in their statements.

%
%We give some remarks on the proofs of Theorems \ref{main-thm-1} and \ref{main-thm-2} in Section \ref{remarks on proof}.
%
Sections \ref{motivation-1} and \ref{motivation-2} discuss motivations for Theorems \ref{main-thm-1} and \ref{main-thm-2}.
Section \ref{applications} contains applications of Theorems \ref{main-thm-1} and \ref{main-thm-2}.
In Section \ref{remarks on proof} we outline the combined proof of Theorems \ref{main-thm-1} and \ref{main-thm-2} and explain its novel aspects.
%We give some remarks on the proofs of Theorems \ref{main-thm-1} and \ref{main-thm-1} in Section \ref{remarks on proof}.
%
%The combined proof of Theorems \ref{main-thm-1} and \ref{main-thm-2} constitutes Sections \ref{sec-notation}, \ref{sec-phi-Phi}, \ref{key function}, \ref{fourier decay FM}, \ref{main lemma}, and \ref{completing the proof}.
%
The combined proof of Theorems \ref{main-thm-1} and \ref{main-thm-2} constitutes Sections \ref{sec-notation}--\ref{completing the proof}.
Section \ref{appendix} contains the proof of Lemma \ref{eta-upper lemma}. 
We pose questions for further study in Section \ref{Questions for Further Study}.
Section \ref{Acknowledgements} contains acknowledgements.
%

%The next theorem is a special case of Theorem \ref{main-thm-1}.

%\begin{thm}\label{main-thm-2}
%Suppose there is an $a \geq 0$ such that
%\begin{align}\label{main-thm-2 e1}
%|Q(M)| \epsilon(M)^{a} \gtrsim M^{a} \quad \text{for all sufficiently large $M$}.
%\end{align}
%Then $E(Q,\Phi,\theta)$ is a Salem set of dimension
%$$
%\dim_{H} E(Q,\Psi,\theta) = \dim_{F} E(Q,\Psi,\theta) = \min\cbr{2a,1}.
%$$
%\end{thm}

\section{Motivation: Explicit Salem Sets}\label{motivation-1}

The first motivation for our main result is the construction of explicit Salem sets and explicit sets with non-zero Fourier dimension. We start with some definitions and notation.

%The Hausdorff dimension of a set $A \subseteq \RR^d$ is denoted $\dim_{H} A$. We assume the reader is familar with the definition of Hausdorff dimension.
%; some good references are \cite{Mattila} and \cite{Wolff}.

For $\alpha \geq 0$, the $\alpha$-dimensional Hausdorff content of a set $A \subseteq \RR^d$ is
$$
H^{\alpha}(A) = \inf_{\mathcal{B}} \sum_{B \in \mathcal{B}} (\text{diam} (B))^{\alpha},
$$
where the infimum is over all 
countable collections $\mathcal{B}$ of balls such that $A \subseteq \bigcup_{B \in \mathcal{B}} B$.
%countable covers $\mathcal{B}$ of $A$ by balls. 
%
The Hausdorff dimension of $A$, denoted $\dim_{H}(A)$, is the supremum all of $\alpha \in [0,d]$ such that $H^{\alpha}(A) > 0$. 

If $\mu$ is a finite Borel measure on $\RR^d$, its Fourier transform $\widehat{\mu}$ is defined by
$$
\widehat{\mu}(\xi) = \int_{\RR^d} e^{-2\pi i x \cdot \xi } d\mu(x) \quad \forall \xi \in \RR^d.
$$
%If $A \subseteq \RR^d$, the Fourier dimension of $A$, denoted $\dim_{F}(A)$, is the supremum of all $\beta \in [0,n]$ such that there is a finite Borel measure %$\mu$ on $\RR^d$ with $\text{supp}(\mu) \subseteq A$ and 
%$$
%|\widehat{\mu}(\xi)| \lesssim |\xi|^{-\beta/2} \quad \forall \xi \in \RR^d, \xi \neq 0. 
%$$
If $A \subseteq \RR^d$, the Fourier dimension of $A$, denoted $\dim_{F}(A)$, is the supremum of all $\beta \in [0,d]$ such that 
$$
|\widehat{\mu}(\xi)| \lesssim |\xi|^{-\beta/2} \quad \forall \xi \in \RR^d, \xi \neq 0
$$
for some non-trivial finite Borel measure 
%Borel probability measure 
$\mu$ on $\RR^d$ with $\text{supp}(\mu) \subseteq A$. 

As general references for Hausdorff dimension, Fourier dimension, and the Fourier analysis of measures, we give \cite{Mattila}, \cite{Mattila-2}, and \cite{Wolff}. The recent papers \cite{ek-1} and \cite{ek-2} (to name just two) also discuss aspects of the theory of Fourier dimension.

It is well-known (cf. \cite[Chapter 12]{Mattila}, \cite[Chapter 3]{Mattila-2}, \cite[Chapter 8]{Wolff}) that if $A$ is a Borel subset of $\RR^d$, then
\begin{align}
\label{major}
\dim_{H}(A) \geq \dim_{F}(A).
\end{align}
A set $A \subseteq \RR^d$ with $\dim_{H}(A) = \dim_{F}(A)$ is called a Salem set.

%Every Borel set in $\RR^d$ of Hausdorff dimension $0$ is a Salem set, every set in $\RR^d$ that contains a ball is a Salem set of dimension $d$, and every $(d-1)$-sphere in $\RR^d$ is a Salem set of dimension $d-1$.

Every Borel set in $\RR^d$ of Hausdorff dimension $0$ is a Salem set, $\RR^d$ itself is a Salem set of dimension $d$, and every $(d-1)$-sphere in $\RR^d$ is a Salem set of dimension $d-1$.

Salem \cite{Salem} proved the existence of Salem sets in $\RR$ of arbitrary dimension $\alpha \in (0,1)$ using a random Cantor-type construction. 
Kahane \cite{Kahane-1966} showed that for every $\alpha \in (0,d)$ there is a Salem set in $\RR^d$ of dimension $\alpha$ by considering the images of compact subsets of $[0,1]$ under certain stochastic processes (see also Chapters 17 and 18 of \cite{Kahane}). 
%p.260, pp. 263-267
%Kahane \cite[Chapter 18]{Kahane} showed that for any $\alpha \in (0,d)$ if $2\alpha/\gamma < c$ the image under (c,d,\gamma) fractional Brownian motion of a compact subset of $\RR^c$ of Hausdorff dimension $2\alpha/\gamma$ is a Salem set in $\RR^d$ of dimension $\alpha$.
Recently, other random constructions of Salem sets have been given by Bluhm \cite{Bluhm-1}, {\L}aba and Pramanik \cite{LP}, and Shmerkin and Suomala \cite{SS}.
These random constructions do not produce explicit examples of Salem sets; they yield only uncountable families of sets which are almost all Salem sets.
%These random constructions cannot produce explicit examples of Salem sets; they yield only uncountable families consisting of sets which are almost all Salem.
%These random constructions cannot produce explicit examples of Salem sets; they yield only uncountable families of sets which are almost all Salem.

%Kaufman gave the first constructive proof of the existence of Salem sets in $\RR$ of dimension $\alpha \in (0,1)$.
%The first constructive proof of the existence of Salem sets in $\RR$ of dimension $\alpha \in (0,1)$ is due to Kaufman.
%Kaufman gave the first proof that an explicit set is Salem. 
%The first proof that an explicit set is Salem is due to Kaufman \cite{Kaufman}.

%The first constructive proof of the existence of Salem set of dimension $\alpha \neq 0, d-1,d$ is due to Kaufman.
Kaufman \cite{Kaufman} was the first to find an explicit Salem set of dimension $\alpha \notin \cbr{0, d-1,d}$. The set Kaufman proved to be Salem is $E(\ZZ,\Psi_{\tau},0)$, where $\Psi_{\tau}(q) = |q|^{-\tau}$ and $\tau > 1$. An easy and well-known argument (which we give in Section \ref{appendix}) gives
$$
\dim_{H} E(\ZZ,\Psi_{\tau},0) \leq \frac{2}{1+\tau}.
$$
Since $E(\ZZ,\Psi_{\tau},0)$ is a Borel set, \eq{major} implies $\dim_{H} E(\ZZ,\Psi_{\tau},0) \geq \dim_{F} (\ZZ,\Psi_{\tau},0)$. Kaufman showed that for every $\tau > 1$ there is a Borel probability measure $\mu$ with %compact 
support contained in $E(\ZZ,\Psi_{\tau},0) \cap [0,1]$ such that
$$
\widehat{\mu}(\xi) = |\xi|^{-1/(1+\tau)} o(\ln |\xi|) \quad \text{as $|\xi| \rightarrow \infty$},
$$
which implies 
$$
\dim_{F} E(\ZZ,\Psi_{\tau},0) \geq \frac{2}{1+\tau},
$$
and hence that $E(\ZZ,\Psi_{\tau},0)$ is a Salem set. See \cite{Bluhm-2} for a variation of Kaufman's argument with ample details.
In his thesis, Bluhm \cite{Bluhm-thesis} showed that $E(\ZZ,\Psi,0)$ is Salem for any $\Psi$ with $\Psi(q) = \psi(|q|)$ and $\psi: \NN \rightarrow (0,\infty)$ decreasing. Technically, the results of Bluhm and Kaufman are for $E(\NN,\Psi,0)$, not $E(\ZZ,\Psi,0)$, but it is easy to adapt their proofs to $E(\ZZ,\Psi,0)$.

%A theorem of Gatesoupe \cite{Gatesoupe} implies that if $A \subseteq [0,\infty)$ is a Salem set of dimension $\alpha \in (0,1)$, then $\cbr{x \in \RR^d : |x|_2 \in A}$ is a Salem set in $\RR^d$ of dimension $d-1+\alpha$. Combining Gatesoupe's and Kaufman's results yields explicit examples of Salem sets in $\RR^d$ of dimension $\alpha$ for every $\alpha \in (d-1,d)$.

By Dirichlet's approximation theorem, for every $x \in \RR$ there are infinitely many pairs $(p,q) \in \ZZ^2$ for which $|x - p/q| \leq 1/|q|^{2}$. Hence, $E(\ZZ,\Psi_{\tau},0) = \RR$ if $\tau \leq 1$.  A real number $x$ is said to be well approximable if there is a $\tau > 1$ and infinitely many pairs $(p,q) \in \ZZ^2$ for which $|x - p/q| \leq 1/|q|^{1+\tau}$. As the set of well approximable numbers is the union of the sets $E(\ZZ,\Psi_{\tau},0)$ with $\tau > 1$, the result of Kaufman \cite{Kaufman} mentioned above implies the set of well approximable numbers is a Salem set of dimension 1. 

A real number $x$ is said to be badly approximable if there is a positive constant $c(x)$ such that $|x - p/q| > c(x)/|q|^{2}$ for all pairs $(p,q) \in \ZZ^2$. Kaufman \cite{Kaufman-bad} shows, in particular, that the set of badly approximable numbers has positive Fourier dimension. See \cite{QUEFFELEC-RAMARE} and \cite{Jordan-Sahlsten} for extensions of the results of \cite{Kaufman-bad}. It is a classic result of Jarn{\'\i}k \cite{Jarnik-1} that the Hausdorff dimension of the set of badly approximable real numbers is 1. It is unknown whether the set of badly approximable numbers is a Salem set.

If $A \subseteq \RR$ is a set of Fourier dimension $\alpha \in [0,1]$, then it is easy to see the product set $A^d$ has Fourier dimension at least $\alpha$ by considering product measures. A theorem of Gatesoupe \cite{Gatesoupe} implies that if $A \subseteq [0,\infty)$ supports a non-trivial measure and has Fourier dimension $\alpha \in [0,1]$, then $\cbr{x \in \RR^d : |x|_2 \in A}$ has Fourier dimension at least $d-1+\alpha$. Moreover, Gatesoupe's theorem implies that if 
$A \subseteq [0,\infty)$ is a Salem set of dimension $\alpha \in [0,1]$, then $\cbr{x \in \RR^d : |x|_2 \in A}$ is a Salem set in $\RR^d$ of dimension $d-1+\alpha$. 
Combining Gatesoupe's and Kaufman's results yields explicit examples of Salem sets in $\RR^d$ of dimension $\alpha$ for every $\alpha \in [d-1,d]$. Explicit examples of sets (Salem or otherwise) in $\RR^d$ 
%for $d \geq 2$ 
with Fourier dimension $\alpha \in (1,d-1)$ were unknown until now.

%A theorem of Gatesoupe \cite{Gatesoupe} shows that $\cbr{x \in \RR^d : |x|_2 \in A}$ is a Salem set in $\RR^d$ of dimension $d-1+\alpha$ whenever $A$ is a Salem of dimension $\alpha \in (0,1)$ contained in $[0,\infty)$. So, since $E(1,1,\NN,\Psi_{\tau},0) \bigcap [0,1]$ is a Salem set of dimension $2/(1+\tau)$ for every $\tau > 1$, there are explicit examples of Salem sets in $\RR^d$ of dimension $\alpha$ for every $\alpha \in (d-1,d)$.

%It is interesting to look for other explicit Salem sets. 

%As a consequence of our main theorem, we will show that $E(1,1,Q,\Psi,\theta)$ is Salem whenever $|Q(M)| \gtrsim M^{1-\epsilon}$ for all $\epsilon>0$.

%Theorem \ref{main-thm-1} generalizes the theorems of Kaufman \cite{Kaufman} and Bluhm \cite{Bluhm-thesis} and gives a family of new explicit Salem sets. Some particular Salem sets produced by Theorem \ref{main-thm-1} are discussed in Section \ref{Applications}.

Theorems \ref{main-thm-1} and \ref{main-thm-2} generalize the theorems of Kaufman \cite{Kaufman} and Bluhm \cite{Bluhm-thesis}. Theorem \ref{main-thm-1} gives many new explicit Salem sets in $\RR$. Some particular Salem sets produced by Theorem \ref{main-thm-1} are discussed in Section \ref{applications}. Theorem \ref{main-thm-2} yields the first examples of explicit set in $\RR^d$ with Fourier dimension $\alpha \in (1,d-1)$. 
%new examples of explicit sets in $\RR^d$ with Fourier dimension $\alpha > 1$.

\section{Motivation: Metrical Diophantine Approximation}\label{motivation-2}

The second motivation for our main result comes from metrical Diophantine approximation, where there is considerable interest in the Hausdorff dimension of $E(Q,\Psi,\theta)$.

%The second motivation for our main result comes from metrical diophantine approximation, where calculating the Hausdorff dimension of $E(Q,\Psi,\theta)$ is of interest.

%This paper makes essentially three novel contributions. Firstly, we establish a non-trivial lower bound on the Hausdorff and Fourier dimension of $E(m,n,Q,\Psi,\theta)$. Secondly, we calculate the Hausdorff dimension of $E(m,n,Q,\Psi,\theta)$ for cases in which it was previously unknown. Thirdly, we generalize theorems of Kaufman \cite{Kaufman} and Bluhm \cite{Bluhm-thesis} and, in doing so, find many new explicit examples of Salem sets.

%There has been considerable interest in %determining 
%the Hausdorff dimension of $E(Q,\Psi,\theta)$. 
For $\tau \in \RR$, define $\Psi_{\tau}: \ZZ \rightarrow [0,\infty)$ by $\Psi_{\tau}(q) = |q|^{-\tau}$. The classical Jarn{\'\i}k-Besicovitch theorem \cite{Jarnik}, \cite{Bes} is that 
$$
\dim_{H} E(\ZZ,\Psi_{\tau},0) = \min\cbr{\frac{2}{1+\tau},1}.
$$
In the setting of restricted Diophantine approximation, where $Q$ is not necessarily equal to $\ZZ$, Borosh and Fraenkel \cite{BF} showed that 
$$
\dim_{H} E(Q,\Psi_{\tau},0) = \min\cbr{\frac{1+\nu(Q)}{1+\tau},1},
$$
where
$$
\nu(Q) 
= \inf\bcbr{ \nu \geq 0: \sum_{\substack{q \in Q \\ q \neq 0}} |q|^{-\nu} < \infty }. 
$$
Eggleston \cite{Eggleston} previously obtained this result for certain sets $Q$ with %either 
$\nu(Q)=0$ or $\nu(Q)=1$.

There are also several results for more general functions $\Psi$.
For $\Psi$ of the form $\Psi(q) = \psi(|q|)$ with $\psi: \NN \rightarrow (0,\infty)$ decreasing, Dodson \cite{Dodson} showed that 
$$
\dim_{H} E(\ZZ,\Psi,0) = \min\cbr{\frac{2}{1+\lambda},1},
$$
where
$$
\lambda = \liminf_{M \rightarrow \infty} \frac{-\ln \psi(M)}{\ln M}.
$$
%Jarn{\'\i}k \cite{Jarnik-2} had essentially proved this in the case $n=1$.
Hinokuma and Shiga \cite{HS} considered the non-monotone function $\Psi_{\tau}^{(HS)}(q) = |\sin q||q|^{-\tau}$ and proved that
$$
\dim_{H} E(\ZZ,\Psi_{\tau}^{(HS)},0) = \min\cbr{\frac{2}{1+\tau},1}.
$$
Dickinson \cite{Dickinson} considered restricted Diophantine approximation with a function $\Psi$ satisfying $\Psi(q) = \psi(|q|)$ with $\psi: \NN \rightarrow (0,\infty)$ and 
$$
\lambda = \liminf_{M \rightarrow \infty} \frac{-\ln \psi(M)}{\ln M} = \limsup_{M \rightarrow \infty} \frac{-\ln \psi(M)}{\ln M}.
$$
Dickinson deduced from the result of Borosh and Fraenkel above that
$$
\dim_{H} E(Q,\Psi,0) = \min\cbr{\frac{1+\nu(Q)}{1+\lambda},1}.
$$
Rynne \cite{Rynne-1998} proved a very general result that implies all of those above. Suppose only that
%$Q$ is an infinite subset of $\ZZ$ and 
$\Psi : \ZZ \rightarrow [0,\infty)$ is positive for all $q \in Q$. Let
$$
\eta(Q,\Psi) = \inf\bcbr{\eta \geq 0 : \sum_{\substack{q \in Q \\ q \neq 0}} |q| \rbr{\frac{\Psi(q)}{|q|}}^{\eta} < \infty }.
$$
Rynne showed that
$$
\dim_{H} E(Q,\Psi,0) = \min\cbr{\eta(Q,\Psi),1}.
$$
%See Section \ref{appendix} for the details on how Rynne's result implies the results preceeding it.

The main result in the case of inhomogeneous Diophantine approximation (i.e, the case where $\theta$ is non-zero) is due to Levesley \cite{Levesley}. Levesley showed that if $\Psi(q) = \psi(|q|)$ with $\psi: \NN \rightarrow (0,\infty)$ decreasing, and if 
$$
\lambda = \liminf_{M \rightarrow \infty} \frac{-\ln \psi(M)}{\ln M},
$$
then
$$
\dim_{H} E(\ZZ,\Psi,\theta) = \min\cbr{\frac{2}{1+\lambda},1}.
$$ 
By an adaptation of Dickinson's argument from \cite{Dickinson}, the assumption that $\psi$ is decreasing can be replaced by the assumption that
$$
\lambda = \liminf_{M \rightarrow \infty} \frac{-\ln \psi(M)}{\ln M} = \limsup_{M \rightarrow \infty} \frac{-\ln \psi(M)}{\ln M}.
$$

The main content of the formulas above is the lower bounds they give on $\dim_{H} E(Q,\Psi,\theta)$. The $\leq$-half of all the formulas for $\dim_{H} E(Q,\Psi,\theta)$ 
%listed 
above are implied by the following lemma whose proof is well-known and straightforward. For completeness, we give the proof in Section \ref{appendix}.
\begin{lemma}\label{eta-upper lemma}
\begin{align*}%\label{eta-upper}
\dim_{H} E(Q,\Psi,\theta) \leq \min\cbr{\eta(Q,\Psi),1}.
\end{align*}
\end{lemma}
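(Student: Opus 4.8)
The plan is to use the fact that $E(Q,\Psi,\theta)$ is a $\limsup$ set and to bound its Hausdorff content directly by the natural cover coming from the defining inequalities. First I would reduce to a bounded window: since Hausdorff dimension is countably stable and $\RR=\bigcup_{n\in\ZZ}[n,n+1]$, it suffices to bound $\dim_H\bigl(E(Q,\Psi,\theta)\cap[n,n+1]\bigr)$ for each $n$. Moreover any subset of $\RR$ has Hausdorff dimension at most $1$, so only the bound $\dim_H E(Q,\Psi,\theta)\le\eta(Q,\Psi)$ needs proof, and only in the case $\eta(Q,\Psi)<1$ (when $\eta(Q,\Psi)\ge 1$ the claim is trivial). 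I will also use that for $q\neq0$ one has $\{x:\|qx-\theta\|\le\Psi(q)\}=\{x:\|qx-\theta\|\le\min(\Psi(q),1/2)\}$ since $\|\cdot\|\le 1/2$, so we may assume $\Psi(q)\le 1/2$ for all $q\neq 0$ without changing either $E(Q,\Psi,\theta)$ or $\eta(Q,\Psi)$.

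Next, fix $s$ with $\eta(Q,\Psi)<s\le 1$. For $q\in Q$ with $q\neq 0$, the set $\{x:\|qx-\theta\|\le\Psi(q)\}$ is the union over $k\in\ZZ$ of the intervals centered at $(\theta+k)/q$ of half-length $\Psi(q)/|q|$, each of diameter $2\Psi(q)/|q|$; since $q\in\ZZ$ so $|q|\ge 1$, and since these centers are spaced $1/|q|$ apart while the half-length is at most $1/(2|q|)$, at most $|q|+2\le 3|q|$ of them meet $[n,n+1]$. The index $q=0$ (for which $\Psi(0)=1$) imposes no constraint relevant to an ``infinitely many $q$'' condition, so for every $N$ the intervals just described with $q\in Q$, $q\neq 0$, $|q|\ge N$ form a countable cover of $E(Q,\Psi,\theta)\cap[n,n+1]$. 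Hence
$$
H^{s}\bigl(E(Q,\Psi,\theta)\cap[n,n+1]\bigr)\;\le\;\sum_{\substack{q\in Q,\ q\neq 0\\ |q|\ge N}}3|q|\left(\frac{2\Psi(q)}{|q|}\right)^{s}\;\le\;6\sum_{\substack{q\in Q,\ q\neq 0\\ |q|\ge N}}|q|\left(\frac{\Psi(q)}{|q|}\right)^{s}.
$$
By the definition of $\eta(Q,\Psi)$ and the choice $s>\eta(Q,\Psi)$, the full series $\sum_{q\in Q,\,q\neq 0}|q|(\Psi(q)/|q|)^{s}$ converges, so its tail tends to $0$ as $N\to\infty$. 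Therefore $H^{s}\bigl(E(Q,\Psi,\theta)\cap[n,n+1]\bigr)=0$, whence $\dim_H\bigl(E(Q,\Psi,\theta)\cap[n,n+1]\bigr)\le s$. Letting $s\downarrow\eta(Q,\Psi)$ and then taking the supremum over $n$ (countable stability) yields $\dim_H E(Q,\Psi,\theta)\le\eta(Q,\Psi)$, and combined with the trivial bound $\dim_H\le 1$ this gives the lemma.

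I do not expect a genuine obstacle: this is the classical covering (Borel--Cantelli-type) upper bound for $\limsup$ sets. The only points needing care are the bookkeeping that at most $O(|q|)$ approximating intervals meet a fixed unit window—where the normalization $\Psi(q)\le 1/2$ is convenient—the harmless handling of $q=0$, and remembering to peel off the trivial bound $\dim_H\le 1$ so that the covering estimate is only ever invoked for exponents $s\le 1$.
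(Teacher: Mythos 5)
Your proof is correct and follows essentially the same route as the paper: reduce to a unit window (you via countable stability, the paper via translation invariance), cover the set by the intervals $\{x:\|qx-\theta\|\le\Psi(q)\}$ with $|q|\ge N$, count $O(|q|)$ such intervals meeting the window, and let $N\to\infty$ using convergence of the series defining $\eta(Q,\Psi)$. The only cosmetic difference is that you normalize $\Psi(q)\le 1/2$ to control the interval count, whereas the paper absorbs this into a constant $C=|\theta|+\sup_q\Psi(q)$; both are equivalent.
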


%The proof of the following theorem is well-known and straightforward. For completeness, we give the proof in Section \ref{appendix}.
%\begin{lemma}\label{eta-upper thm}
%\begin{align}\label{eta-upper}
%\dim_{H} E(Q,\Psi,\theta) \leq \min\cbr{\eta(Q,\Psi),1}.
%\end{align}
%\end{lemma}
%The $\leq$-half of all the formulas for $\dim_{H} E(Q,\Psi,\theta)$ listed above are implied by Lemma \ref{eta-upper thm}. Hence the real content of these %results is the lower bounds they give on $\dim_{H} E(Q,\Psi,\theta)$. %they give in various cases.

%An easy and well-known argument (which we give in Section \ref{proof of prop}) gives the general upper bound
%\begin{align}\label{eta-upper}
%\dim_{H} E(Q,\Psi,\theta) \leq \min\cbr{\eta(Q,\Psi),1}.
%\end{align}
%%%where
%%%$$
%%%\eta(Q,\Psi) = \inf\cbr{\eta \in [0,2] : \sum_{q \in Q} |q| \rbr{\frac{\Psi(q)}{|q|}}^{\eta} < \infty }.
%%%$$
%This inequality implies the $\leq$-half of all the formulas for $\dim_{H} E(Q,\Psi,\theta)$ listed above. Hence the real content of these results is the lower bounds on $\dim_{H} E(Q,\Psi,\theta)$ they give in various cases. 

Because of \eq{major}, the Fourier analytic method of Theorem \ref{main-thm-1} stands as an alternative to the usual methods of proving lower bounds on the Hausdorff dimension of $E(Q,\Psi,\theta)$. In fact, Theorem \ref{main-thm-1} implies or implies special cases of all the results for $\dim_{H} E(Q,\Psi,\theta)$ above (details are given in Section \ref{applications}). Moreover, Theorem \ref{main-thm-1} allows us to calculate the Hausdorff dimension of $E(Q,\Psi,\theta)$ in cases that (as far as we know) have not been treated previously in the literature, such as the case where $\theta \neq 0$ and $Q \neq \NN, \ZZ$. 

One particular advantage of the Fourier analytic method of Theorem \ref{main-thm-1} is the ease with which it handles the inhomogeneous case. In the proof of Theorem \ref{main-thm-1} it is trivial to accommodate $\theta \neq 0$, while Levelsey's proof of his result for $\theta \neq 0$ is a non-trivial extension of Dodson's proof for $\theta = 0$.

Our results for $\dim_{H} E(Q,\Psi,\theta)$ are not surprising, and it is likely that they can be obtained by directly extending the methods used by those authors mentioned already in this section, or by applying the powerful and unifying mass transference principle of Beresnevich and Velani \cite{BV}. Of course, these methods cannot be applied to the calculation of the Fourier dimension, which is the main novelty of our paper.

There are analogs of the formulas above for $\dim_{H} E(m,n,Q,\Psi,\theta)$. 
For example Rynne \cite{Rynne-1998} proved
$$
\dim_{H} E(m,n,Q,\Psi,0) = \min\cbr{m(n-1) + \eta(Q,\Psi),mn}
$$
when $\Psi : \ZZ^{n} \rightarrow [0,\infty)$ is positive for all $q \in Q$ and
$$
\eta(Q,\Psi) = \inf\bcbr{\eta \geq 0 : \sum_{\substack{q \in Q \\ q \neq 0}} |q|^m \rbr{\frac{\Psi(q)}{|q|}}^{\eta} < \infty }.
$$
%Theorem \ref{main-thm-2} (via \eq{major}) provides a lower bound on $\dim_{H} E(m,n,Q,\Psi,\theta)$, but the lower bound given by Theorem \ref{main-thm-2} does not reach the true value of $\dim_{H} E(m,n,Q,\Psi,\theta)$ for any known case.
Theorem \ref{main-thm-2} (via \eq{major}) provides a lower bound on $\dim_{H} E(m,n,Q,\Psi,\theta)$, but it does not reach the true value of $\dim_{H} E(m,n,Q,\Psi,\theta)$ for any known case with $mn > 1$.

\section{Applications}\label{applications}

In this section we will present several consequences of Theorem \ref{main-thm-1} that give 
%specific 
new 
families of explicit Salem sets and imply formulas for $\dim_H E(Q,\Psi,\theta)$ discussed in Section \ref{motivation-2}. We will also present a typical consequence of Theorem \ref{main-thm-2} that yields explicit  %sets in $\RR^{mn}$ 
%%%for $mn > 1$ 
%with Fourier dimension larger than $1$.
sets in $\RR^d$ with Fourier dimension strictly between $1$ and $d-1$.

\begin{thm}\label{prop-1}
Assume $\Psi$ is of the form $\Psi(q) = \psi(|q|)$ with $\psi: \NN \rightarrow (0,\infty)$ a decreasing function. Assume there is an increasing function $h:(0,\infty) \rightarrow (0,\infty)$ such that
\begin{align}\label{prop-1 1}
|Q(M)| \geq M / h(M) \quad \forall M \in \mathbb{N}
\end{align}
and 
$$
\lim_{x \rightarrow \infty} \frac{\ln h(x)}{\ln x} = 0
$$
Then $E(Q,\Psi,\theta)$ is a Salem set of dimension $\min\cbr{2/(1+\lambda),1}$, where
\begin{align}\label{prop-1 3}
\lambda = \liminf_{M \rightarrow \infty} \frac{-\ln(\psi(M))}{\ln M}.
\end{align}
\end{thm}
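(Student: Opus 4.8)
The plan is to squeeze $\dim_H E(Q,\Psi,\theta)$ and $\dim_F E(Q,\Psi,\theta)$ between the common value $d_0 := \min\{2/(1+\lambda),1\}$. Since $E(Q,\Psi,\theta)$ is a Borel set (indeed a $\limsup$ set of closed sets), inequality \eqref{major} gives $\dim_F E(Q,\Psi,\theta) \le \dim_H E(Q,\Psi,\theta)$, so it suffices to prove (i) $\dim_F E(Q,\Psi,\theta) \ge d_0$ using Theorem \ref{main-thm-1}, and (ii) $\dim_H E(Q,\Psi,\theta) \le d_0$ using Lemma \ref{eta-upper lemma}. We may assume $\lambda < \infty$: if $\lambda = \infty$ then $d_0 = 0$, and the same dyadic estimate used for (ii) below shows that the sum defining $\eta(Q,\Psi)$ converges for every $\eta > 0$, so $\eta(Q,\Psi) = 0$ and $\dim_H E(Q,\Psi,\theta) = 0$ by Lemma \ref{eta-upper lemma}, making the set trivially Salem.

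For the lower bound (i), I would fix $\epsilon > 0$, set $a = (1+\lambda+\epsilon)^{-1}$, keep the given function $h$, and take $\mathcal{M} = \{ M \in \NN : M \ge 2,\ \psi(M) > M^{-(\lambda+\epsilon)} \}$. Because $\lambda$ is the $\liminf$ in \eqref{prop-1 3}, the set $\mathcal{M}$ is infinite, hence unbounded. For $M \in \mathcal{M}$, monotonicity of $\psi$ gives $\epsilon(M) = \min_{q \in Q(M)} \psi(|q|) \ge \psi(M) > M^{-(\lambda+\epsilon)}$, so \eqref{prop-1 1} yields $|Q(M)| \epsilon(M)^a h(M) \ge \frac{M}{h(M)}\, M^{-a(\lambda+\epsilon)}\, h(M) = M^{1-a(\lambda+\epsilon)} = M^a$, the last equality by the choice of $a$. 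Thus hypothesis \eqref{main-thm-1 e1} holds, and Theorem \ref{main-thm-1} produces a Borel probability measure $\mu$ on $E(Q,\Psi,\theta)$ satisfying \eqref{main-thm-1 e2}. Writing $\exp(\ln|\xi|/\ln\ln|\xi|) = |\xi|^{1/\ln\ln|\xi|}$ and using $\ln h(x)/\ln x \to 0$, the right-hand side of \eqref{main-thm-1 e2} is $|\xi|^{-a+o(1)}$ as $|\xi| \to \infty$; combined with the trivial bound $|\widehat\mu| \le 1$ near the origin, this gives $|\widehat\mu(\xi)| \lesssim |\xi|^{-\beta/2}$ for every $\beta < 2a$ and every $\xi \ne 0$, so $\dim_F E(Q,\Psi,\theta) \ge \min\{2a,1\} = \min\{2/(1+\lambda+\epsilon),1\}$. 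Letting $\epsilon \to 0$ gives $\dim_F E(Q,\Psi,\theta) \ge d_0$.

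For the upper bound (ii), by Lemma \ref{eta-upper lemma} it is enough to show $\eta(Q,\Psi) \le 2/(1+\lambda)$. I would fix $\eta > 2/(1+\lambda)$ and choose $\epsilon > 0$ small enough that still $\eta > 2/(1+\lambda-\epsilon)$; by the $\liminf$ definition of $\lambda$ there is $M_0$ with $\psi(M) \le M^{-(\lambda-\epsilon)}$ for all $M \ge M_0$. A routine dyadic decomposition of the sum defining $\eta(Q,\Psi)$, together with $|Q(2^k)| \le 2^k$ and $\psi(|q|) \lesssim (2^k)^{-(\lambda-\epsilon)}$ on $Q(2^k)$ for large $k$, gives $\sum_{q \in Q(2^k)} |q|(\Psi(q)/|q|)^{\eta} \lesssim (2^k)^{2-\eta(1+\lambda-\epsilon)}$, which is summable in $k$ since $\eta(1+\lambda-\epsilon) > 2$. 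Hence $\sum_{q \in Q,\, q \ne 0} |q|(\Psi(q)/|q|)^{\eta} < \infty$, so $\eta \ge \eta(Q,\Psi)$; letting $\eta \downarrow 2/(1+\lambda)$ gives $\eta(Q,\Psi) \le 2/(1+\lambda)$, whence $\dim_H E(Q,\Psi,\theta) \le \min\{\eta(Q,\Psi),1\} \le d_0$. Combining (i), (ii), and \eqref{major} yields $\dim_F E(Q,\Psi,\theta) = \dim_H E(Q,\Psi,\theta) = d_0$.

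The substance is entirely in Theorem \ref{main-thm-1}; the rest is bookkeeping. The one place that needs care is translating the decay bound \eqref{main-thm-1 e2} into a statement about Fourier dimension: the universal factor $\exp(\ln|\xi|/\ln\ln|\xi|)$ and the term $h(4|\xi|)$ must be absorbed into the $|\xi|^{o(1)}$ error, and this works precisely because of the hypothesis $\ln h(x)/\ln x \to 0$, which is exactly what lets the decay exponent $a$ pass cleanly to the Fourier dimension $2a$. A related subtlety is that one should not expect a single clean increasing function to serve as the $h$ required by Theorem \ref{main-thm-1}; instead one keeps the given $h$ and restricts to the unbounded set $\mathcal{M}$ of scales at which the $\liminf$ defining $\lambda$ is nearly attained.
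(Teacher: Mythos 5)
Your proof is correct and follows essentially the same route as the paper's: Lemma \ref{eta-upper lemma} plus a dyadic/direct estimate for the upper bound, and Theorem \ref{main-thm-1} (choosing $a = 1/(1+\lambda+\epsilon)$, the given $h$, and the unbounded set of scales where the $\liminf$ is nearly attained, using monotonicity of $\psi$ to get $\epsilon(M)\ge\psi(M)$) for the lower bound. The only place you add detail beyond the paper is the explicit bookkeeping translating the decay bound \eqref{main-thm-1 e2} into a Fourier-dimension statement via $\exp(\ln|\xi|/\ln\ln|\xi|)\,h(4|\xi|)=|\xi|^{o(1)}$, which the paper leaves implicit; this is a worthwhile clarification but not a different argument.
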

\begin{proof}
Since $\Psi$ is bounded, $\lambda \geq 0$.

Let $\lambda^{\prime} < \lambda$ and $\delta > 0$. By \eq{prop-1 3}, $\psi(M) < M^{-\lambda^{\prime}}$ for all large $M \in \mathbb{N}$. It follows that
%\begin{align*}
%\sum_{q \in Q} |q|\rbr{\frac{\Psi(q)}{|q|}}^{(2+\delta)/(1+\lambda^{\prime})}
%%%&=
%%%\sum_{q \in Q} |q|\rbr{\frac{\psi(|q|)}{|q|}}^{(1+\nu^{\prime})/(1+\lambda)} \\
%&\lesssim
%\sum_{q \in Q} |q|\rbr{\frac{|q|^{-\lambda^{\prime}}}{|q|}}^{(2+\delta)/(1+\lambda^{\prime})} \\
%&=
%\sum_{q \in Q} |q|^{-(1+\delta)} < \infty.
%\end{align*}
$$
\sum_{\substack{q \in Q \\ q \neq 0}} |q|\rbr{\frac{\Psi(q)}{|q|}}^{(2+\delta)/(1+\lambda^{\prime})} 
\lesssim 
%\sum_{q \in Q} |q|\rbr{\frac{|q|^{-\lambda^{\prime}}}{|q|}}^{(2+\delta)/(1+\lambda^{\prime})}
%=
\sum_{\substack{q \in Q \\ q \neq 0}} |q|^{-(1+\delta)} < \infty.
$$
Therefore, by applying Lemma \ref{eta-upper lemma} and then letting $\lambda^{\prime} \rightarrow \lambda$ and $\delta \rightarrow 0$, we have
%Therefore, by Lemma \ref{eta-upper lemma},
%$$
%\dim_{H} E(Q,\Psi,\theta) \leq \min\cbr{\frac{2+\delta}{1+\lambda^{\prime}},1}.
%$$
%Letting $\lambda^{\prime} \rightarrow \lambda$ and $\delta \rightarrow 0$ gives 
$$
\dim_{H} E(Q,\Psi,\theta) \leq \min\cbr{\frac{2}{1+\lambda},1}.
$$
If $\lambda = \infty$, this argument shows $\dim_{H} E(Q,\Psi,\theta) = \dim_{F} E(Q,\Psi,\theta) = 0$.

Assume $0 \leq \lambda < \infty$. Let $\lambda^{\prime} > \lambda$. By \eq{prop-1 3}, there is an infinite set $\mathcal{M} \subseteq \NN$ such that $\psi(M) > M^{-\lambda^{\prime}}$ for all $M \in \mathcal{M}$. Since $\psi$ is decreasing, it follows that $\epsilon(M) > M^{-\lambda^{\prime}}$ for all $M \in \mathcal{M}$. Combining this with \eq{prop-1 1}, we see that \eq{main-thm-1 e1} holds with $a = 1/(1+\lambda^{\prime})$. Since $\lambda^{\prime} > \lambda$ is arbitrary, Theorem \ref{main-thm-1 e1} gives
$$
\dim_{F} E(Q,\Psi,\theta) \geq \min\cbr{\frac{2}{1+\lambda},1}.
$$
\end{proof}
Theorem \ref{prop-1} implies the result of Dodson \cite{Dodson} for $\dim_{H} E(\ZZ,\Psi,0)$ 
%(which in turn implies the Jarn{\'\i}k-Besicovitch theorem \cite{Jarnik}, \cite{Bes}) 
discussed in Section \ref{motivation-2}. Theorem \ref{prop-1} also implies the formula for $\dim_{H} E(\ZZ,\Psi,\theta)$ due to Levesley \cite{Levesley} mentioned in Section \ref{motivation-2}.

\begin{thm}\label{cor-8}
Assume $\Psi$ is of the form $\Psi(q) = \psi(|q|)$ with $\psi: \NN \rightarrow (0,\infty)$. Assume
\begin{align}\label{cor-8 1}
\lambda = \liminf_{M \rightarrow \infty} \frac{-\ln \psi(M)}{\ln M} = \limsup_{M \rightarrow \infty} \frac{-\ln \psi(M)}{\ln M}
\end{align}
and
\begin{align}\label{cor-8 2}
\sum_{\substack{q \in Q \\ q \neq 0}} |q|^{-1} = \infty.
\end{align}
Then $E(Q,\Psi,\theta)$ is a Salem set of dimension $\min\cbr{2/(1+\lambda),1}$.
\end{thm}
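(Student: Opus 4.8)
The plan is to establish the two matching estimates $\dim_{H} E(Q,\Psi,\theta)\le\min\cbr{2/(1+\lambda),1}$ and $\dim_{F} E(Q,\Psi,\theta)\ge\min\cbr{2/(1+\lambda),1}$. Since $E(Q,\Psi,\theta)=\bigcap_{N}\bigcup_{q\in Q,\,|q|\ge N}\cbr{x:\norm{qx-\theta}\le\Psi(q)}$ is an $F_{\sigma\delta}$ set, hence Borel, these two bounds together with \eq{major} will force $\dim_{H} E(Q,\Psi,\theta)=\dim_{F} E(Q,\Psi,\theta)=\min\cbr{2/(1+\lambda),1}$, i.e.\ that $E(Q,\Psi,\theta)$ is Salem of that dimension. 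For the upper bound I would argue exactly as in the proof of Theorem \ref{prop-1}: that argument invokes Lemma \ref{eta-upper lemma} and uses only the $\liminf$ half of \eq{cor-8 1} (together with the boundedness of $\psi$); no monotonicity of $\psi$ is needed. If $\lambda=\infty$ this already gives $\dim_{H} E(Q,\Psi,\theta)=\dim_{F} E(Q,\Psi,\theta)=0$, so from now on I assume $\lambda<\infty$.

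For the lower bound I would apply Theorem \ref{main-thm-1}. Fix $\lambda'>\lambda$ and $a\in(0,1/(1+\lambda'))$, and set $s:=a(1+\lambda')$, so that $s\in(0,1)$. The $\limsup$ half of \eq{cor-8 1} gives $\psi(j)>j^{-\lambda'}$ for all large $j$, and since $j\le M$ forces $j^{-\lambda'}\ge M^{-\lambda'}$, it follows that $\epsilon(M)=\min_{q\in Q(M)}\psi(|q|)>M^{-\lambda'}$ for all large $M$. Hence, for any large $M$ with $|Q(M)|\ge M^{s}$ one has $|Q(M)|\,\epsilon(M)^{a}>M^{s-a\lambda'}=M^{a}$, so \eq{main-thm-1 e1} holds at such an $M$ (with $a$ as chosen and $h\equiv1$, say). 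It therefore remains to exhibit an unbounded set of $M$ along which $|Q(M)|\ge M^{s}$; this is the only place where \eq{cor-8 2} is used, and it is the crux of the argument.

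I would obtain such a set by a dyadic counting argument. Writing $Q(2^{k})=\cbr{q\in Q:2^{k-1}<|q|\le2^{k}}$ and using $|q|>2^{k-1}$ on $Q(2^{k})$, the divergence \eq{cor-8 2} gives $\sum_{k\ge0}2^{-k}|Q(2^{k})|=\infty$, whereas $\sum_{k\ge0}2^{-k(1-s)}<\infty$ because $s<1$; comparing these two series shows that $2^{-k}|Q(2^{k})|>2^{-k(1-s)}$, i.e.\ $|Q(2^{k})|>(2^{k})^{s}$, for infinitely many $k$. Taking $\mathcal{M}$ to be the set of these (sufficiently large) values $2^{k}$ verifies \eq{main-thm-1 e1}, so Theorem \ref{main-thm-1} supplies a Borel probability measure $\mu$ on $E(Q,\Psi,\theta)$ with $|\widehat{\mu}(\xi)|\lesssim|\xi|^{-a}\exp\rbr{\ln|\xi|/\ln\ln|\xi|}$. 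Since $\exp\rbr{\ln|\xi|/\ln\ln|\xi|}=|\xi|^{o(1)}$, this gives $\dim_{F} E(Q,\Psi,\theta)\ge\min\cbr{2a,1}$; letting $a\uparrow1/(1+\lambda')$ and then $\lambda'\downarrow\lambda$ yields $\dim_{F} E(Q,\Psi,\theta)\ge\min\cbr{2/(1+\lambda),1}$, completing the proof. I expect the main obstacle to be exactly this passage from the single divergence \eq{cor-8 2} to a usable density bound: one cannot hope for $|Q(M)|\gtrsim M^{s}$ to hold for \emph{all} large $M$, only along a sparse subsequence, but that is all Theorem \ref{main-thm-1} needs, and the divergent-versus-convergent series comparison delivers it. Everything else is a routine adaptation of the proof of Theorem \ref{prop-1}.
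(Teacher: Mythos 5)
Your proof is correct and follows essentially the same route as the paper: the upper bound is recycled verbatim from Theorem \ref{prop-1}, and the lower bound is obtained from Theorem \ref{main-thm-1} by extracting a density bound for $|Q(M)|$ along a sparse sequence of scales via a divergent-versus-convergent dyadic series comparison driven by \eq{cor-8 2}. The only difference is cosmetic: you take $a$ strictly below $1/(1+\lambda')$ so that you can use $h\equiv 1$ and the bound $|Q(2^k)|>2^{ks}$ with $s<1$, whereas the paper sets $a=1/(1+\lambda')$ exactly and compensates by taking $h(x)=\ln^2 x$, obtaining $|Q(M)|\geq M/\ln^2 M$ infinitely often; both yield the same Fourier-dimension bound after letting $\lambda'\downarrow\lambda$.
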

\begin{proof}
Since $\Psi$ is bounded, $\lambda \geq 0$. 

The same argument as in the proof of Theorem \ref{prop-1} shows 
$$
\dim_{H} E(Q,\Psi,\theta) \leq \min\cbr{\frac{2}{1+\lambda},1}.
$$
If $\lambda = \infty$, the argument shows $\dim_{H} E(Q,\Psi,\theta) = \dim_{F} E(Q,\Psi,\theta) = 0$.

Assume $0 \leq \lambda < \infty$. Seeking a contradiction suppose, $|Q(M)| < M/\ln^2(M)$ for all large $M \in \NN$. Then
\begin{align*}
\sum_{\substack{q \in Q \\ q \neq 0}} |q|^{-1}
&=
\sum_{k=0}^{\infty} \sum_{q \in Q(2^{k})} |q|^{-1} 
\lesssim
\sum_{k=0}^{\infty} 2^{-k} \sum_{q \in Q(2^{k})} 1 \\
&\lesssim
\sum_{k=0}^{\infty} \frac{1}{\ln^2(2^k)} =
\frac{1}{\ln^2(2)} \sum_{k=0}^{\infty} \frac{1}{k^2} < \infty,
\end{align*}
which contradicts \eq{cor-8 2}. So there is an infinite set $\mathcal{M} \subseteq \mathbb{N}$ such that 
$$
|Q(M)| \geq M/\ln^2(M) \quad \forall M \in \mathcal{M}.
$$ 
Let $\lambda^{\prime} > \lambda$ be given. By \eq{cor-8 1}, $\psi(M) > M^{-\lambda^{\prime}}$ for all large $M \in \NN$. Therefore $\epsilon(M) > M^{-\lambda^{\prime}}$ for all large $M \in \NN$. After removing finitely elements of $\mathcal{M}$, we have 
$$
\epsilon(M) > M^{-\lambda^{\prime}} \quad \forall M \in \mathcal{M}.
$$ 
Then \eq{main-thm-1 e1} holds with $a = 1/(1+\lambda^{\prime})$ and $h(x) = \ln^2(x)$. Since $\lambda^{\prime} > \lambda$ is arbitrary, Theorem \ref{main-thm-1} gives
$$
\dim_{F} E(Q,\Psi,\theta) \geq \min\cbr{\frac{2}{1+\lambda},1}.
$$
\end{proof}
Theorem \ref{cor-8} implies the result of Dickinson \cite{Dickinson} for $\dim_{H} E(Q,\Psi,0)$ discussed in Section \ref{motivation-2} in the case $\nu(Q) = 1$. 
Consequently, it also implies the results of Borosh and Fraenkel \cite{BF} and Eggleston \cite{Eggleston} in the case $\nu(Q) = 1$.
Theorem \ref{cor-8} implies the variation of the result of Levesley \cite{Levesley} for $\dim_{H} E(\ZZ,\Psi,\theta)$ mentioned in Section \ref{motivation-2} that uses Dickinson's argument from \cite{Dickinson}.

As far as we know, Theorems \ref{prop-1} and \ref{cor-8} represent the first calculation of $\dim_{H} E(Q,\Psi,\theta)$ in the case where $\theta \neq 0$ and $Q \neq \NN, \ZZ$.

%The following corollary says, in particular, that the set studied by Hinokuma and Shiga \cite{HS} is a Salem set.
\begin{thm}\label{cor-10}
Suppose $\Psi_{\tau}^{(HS)}: \ZZ \rightarrow (0,\infty)$ is defined by $\Psi_{\tau}^{(HS)}(q) = |\sin q| |q|^{-\tau}$. Then $E(\ZZ,\Psi_{\tau}^{(HS)},\theta)$ is a Salem set of dimension of $\min\cbr{2/(1+\tau),1}$.
\end{thm}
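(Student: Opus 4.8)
The plan is to sandwich both $\dim_F E(\ZZ,\Psi_\tau^{(HS)},\theta)$ and $\dim_H E(\ZZ,\Psi_\tau^{(HS)},\theta)$ between $\min\cbr{2/(1+\tau),1}$ from both sides, so that the two dimensions coincide and equal this value. The upper bound is immediate from Lemma \ref{eta-upper lemma}: since $\abs{\sin q}\le 1$ we have $\Psi_\tau^{(HS)}(q)/\abs{q}\le\abs{q}^{-(1+\tau)}$, hence $\sum_{q\neq 0}\abs{q}\rbr{\Psi_\tau^{(HS)}(q)/\abs{q}}^{\eta}\le\sum_{q\neq 0}\abs{q}^{1-(1+\tau)\eta}<\infty$ whenever $\eta>2/(1+\tau)$. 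Therefore $\eta(\ZZ,\Psi_\tau^{(HS)})\le 2/(1+\tau)$, and Lemma \ref{eta-upper lemma} gives $\dim_H E(\ZZ,\Psi_\tau^{(HS)},\theta)\le\min\cbr{2/(1+\tau),1}$.

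For the lower bound the obstacle is that $\abs{\sin q}$ is not monotone and is extremely small for integers $q$ near a multiple of $\pi$, so for $Q=\ZZ$ the quantity $\epsilon(M)=\min_{q\in Q(M)}\Psi_\tau^{(HS)}(q)$ is much smaller than $M^{-\tau}$, and Theorem \ref{main-thm-1} cannot be applied to $\ZZ$ directly. The remedy is to discard the bad integers: set $Q'=\cbr{q\in\ZZ:\abs{\sin q}\ge 1/2}$. Then $E(Q',\Psi_\tau^{(HS)},\theta)\subseteq E(\ZZ,\Psi_\tau^{(HS)},\theta)$, so it suffices to build a measure with fast Fourier decay on $E(Q',\Psi_\tau^{(HS)},\theta)$ (applying Theorem \ref{main-thm-1} to the function that agrees with $\Psi_\tau^{(HS)}$ off $0$ and equals $1$ at $0$, which does not change the set since $0\notin Q'$). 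Since $\pi$ is irrational, the integers are equidistributed modulo $2\pi$ (Weyl), so the proportion of $q$ with $M/2<\abs{q}\le M$ that lie in $Q'$ tends to $2/3$; in particular $\abs{Q'(M)}\gtrsim M$ for all large $M$. Moreover, for $q\in Q'(M)$ we have $\Psi_\tau^{(HS)}(q)=\abs{\sin q}\abs{q}^{-\tau}\ge\tfrac12 M^{-\tau}$, so the associated $\epsilon'(M)\ge\tfrac12 M^{-\tau}$.

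I would then invoke Theorem \ref{main-thm-1} for $Q'$ with $a=\min\cbr{1/(1+\tau),\,1/2}$, with $h$ a sufficiently large constant function, and with $\mathcal{M}=\cbr{M\in\NN:M\ge M_0}$ for $M_0$ large. Hypothesis \eq{main-thm-1 e1} reduces to $\abs{Q'(M)}\epsilon'(M)^{a}\gtrsim M^{a}$, which holds in both cases: when $\tau\ge 1$ and $a=1/(1+\tau)$, the left side is $\gtrsim M\cdot M^{-\tau a}=M^{1-\tau/(1+\tau)}=M^{a}$; when $\tau<1$ and $a=1/2$, it is $\gtrsim M\cdot M^{-\tau/2}=M^{1-\tau/2}\ge M^{1/2}=M^{a}$ since $1-\tau/2>1/2$. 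Theorem \ref{main-thm-1} then produces a Borel probability measure $\mu$ supported on $E(Q',\Psi_\tau^{(HS)},\theta)$ with $\abs{\widehat{\mu}(\xi)}\lesssim\abs{\xi}^{-a}\exp\rbr{\ln\abs{\xi}/\ln\ln\abs{\xi}}$ for $\abs{\xi}>e$; because $\exp\rbr{\ln\abs{\xi}/\ln\ln\abs{\xi}}=\abs{\xi}^{o(1)}$, this forces $\dim_F E(Q',\Psi_\tau^{(HS)},\theta)\ge 2a=\min\cbr{2/(1+\tau),1}$, and hence the same lower bound holds for $E(\ZZ,\Psi_\tau^{(HS)},\theta)$.

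Finally, combining the two bounds with \eq{major} (valid because $E(\ZZ,\Psi_\tau^{(HS)},\theta)$, being a $\limsup$ of closed sets, is Borel) yields $\min\cbr{2/(1+\tau),1}\le\dim_F E(\ZZ,\Psi_\tau^{(HS)},\theta)\le\dim_H E(\ZZ,\Psi_\tau^{(HS)},\theta)\le\min\cbr{2/(1+\tau),1}$, so the Fourier and Hausdorff dimensions agree and $E(\ZZ,\Psi_\tau^{(HS)},\theta)$ is a Salem set of the stated dimension. The one genuinely non-routine point is the density estimate $\abs{Q'(M)}\gtrsim M$, i.e. the passage from $\ZZ$ to the subset on which $\abs{\sin q}$ is bounded below: this is what turns the oscillating weight $\Psi_\tau^{(HS)}$ into an essentially Kaufman-type weight to which Theorem \ref{main-thm-1} applies.
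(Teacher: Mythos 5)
Your argument is correct and takes essentially the same approach as the paper: both bound $\dim_H$ from above via Lemma~\ref{eta-upper lemma} and then pass to $Q=\{q:|\sin q|\geq 1/2\}$, lower-bound $|Q(M)|$ and $\epsilon(M)$, and apply Theorem~\ref{main-thm-1}. The only cosmetic difference is that the paper gets $|Q(M)|\gtrsim M$ by the elementary observation that $Q$ contains the nearest integer to each $(2k+1)\pi/2$, whereas you invoke Weyl equidistribution of $\ZZ$ mod $2\pi$; both are valid, the former being slightly more self-contained.
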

\begin{proof}
For every $\epsilon > 0$, 
\begin{align*}
\sum_{\substack{q \in \ZZ \\ q \neq 0}} |q| \rbr{\frac{\Psi(q)}{|q|}}^{2/(1+\tau) + \epsilon}
%&=
=
\sum_{\substack{q \in \ZZ \\ q \neq 0}} |q|^{-1-(1+\tau)\epsilon} |\sin q|^{2/(1+\tau) + \epsilon} %\\
%&\leq
%\sum_{q \in \NN} |q|^{-1-(1+\tau)\epsilon} 
< \infty.
\end{align*}
So, by Lemma \ref{eta-upper lemma},
$$
\dim_{H} E(\ZZ,\Psi_{\tau}^{(HS)},\theta) \leq \min\cbr{\frac{2}{1+\tau},1}.
$$
Let $Q = \cbr{q \in \NN : |\sin q| \geq 1/2}$. Clearly
$$
\min_{q \in Q(M)} \Psi_{\tau}^{(HS)}(q) \geq \frac{1}{2}M^{-\tau} \quad \forall M \in \mathbb{N}.
$$
It is also easy to see that 
$$
|Q(M)| \geq \frac{M}{4\pi}
$$
for all large $M$ (for instance, by noting $Q$ contains the nearest integer(s) to $(2k+1)\pi/2$ for every $k \in \NN$). 
It follows that \eq{main-thm-1 e1} holds with $a = 1/(1+\tau)$ and $h(x)=4\pi$. Therefore Theorem \ref{main-thm-1} and the fact $E(Q,\Psi_{\tau}^{(HS)},\theta) \subseteq E(\ZZ,\Psi_{\tau}^{(HS)},\theta)$ implies
$$
\dim_{F} E(\ZZ,\Psi_{\tau}^{(HS)},\theta) \geq \min\cbr{\frac{2}{1+\tau},1}.
$$
\end{proof}
Theorem \ref{cor-10} implies the formula for $\dim_{H} E(\ZZ,\Psi_{\tau}^{(HS)},0)$ due to Hinokuma and Shiga \cite{HS} mentioned in Section \ref{motivation-2}.

%Theorem with Rynne style assumption?

%Corollary \ref{cor-10} says, in particular, that the set studied by Hinokuma and Shiga \cite{HS} is a Salem set.

%Note that
%$$
%\min\cbr{m(n-1) + \frac{m+n}{1+\tau},mn} = \min\cbr{\frac{2n}{1+\tau},mn} < mn \quad \Rightarrow \quad \tau = \frac{n-m}{mn-m} - 1 \leq 0. 
%$$
%In light of this, our final corollary illustrates how far we are from showing that $E(m,n,Q,\Psi,\theta)$ is a Salem set. It follows immediately from Corollary %\ref{cor-1} and the result of Levesley \cite{Levesley} mentioned above. Recall that $\Psi_{\tau}(q) = |q|^{-\tau}$.
%\begin{cor}\label{cor-11}
%\begin{align*}
%\min\cbr{m(n-1) + \frac{m+n}{1+\tau},mn} &= \dim_H E(m,n,\ZZ^n,\Psi_{\tau},\theta) \\
%&\geq \dim_F E(m,n,\ZZ^n,\Psi_{\tau},\theta) \\
%&\geq \min\cbr{\frac{2n}{1+\tau},mn}
%\end{align*}
%\end{cor}

Finally, we give a typical consequence of Theorem \ref{main-thm-2} that yields (in particular) explicit 
sets in $\RR^d$ with Fourier dimension strictly between $1$ and $d-1$. 
%sets in $\RR^{mn}$ with Fourier dimension larger than $1$. 
The Hausdorff dimension of the sets is also determined for comparison.

\begin{thm}\label{mn-app}
Assume $\Psi: \ZZ^n \rightarrow [0,\infty)$ is of the form $\Psi(q) = \psi(|q|)$ with $\psi: \NN \rightarrow (0,\infty)$ and
\begin{align}\label{mn-app e1}
\lambda = \liminf_{M \rightarrow \infty} \frac{-\ln \psi(M)}{\ln M} = \limsup_{M \rightarrow \infty} \frac{-\ln \psi(M)}{\ln M}.
\end{align}
Assume $h:(0,\infty) \rightarrow (0,\infty)$ is an increasing function such that
\begin{align}\label{mn-app e2}
\lim_{x \rightarrow \infty} \frac{\ln h(x)}{\ln x} = 0.
\end{align}
Assume there is an unbounded set $\mathcal{M} \subseteq (0,\infty)$ such that
\begin{align}\label{mn-app e3}
|Q(M)|h(M) \geq M^{n} \quad \forall M \in \mathcal{M}.
\end{align}
Then 
$$
\dim_{H} E(m,n,Q,\Psi,0) = \min\cbr{m(n-1) + \frac{m+n}{1+\lambda},mn}
$$
and
$$
\dim_{F} E(m,n,Q,\Psi,0) \geq \min\cbr{\frac{2n}{1+\lambda},mn}.
$$
\end{thm}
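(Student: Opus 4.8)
The plan is to derive the Hausdorff dimension equality from the $\leq$ half (the higher-dimensional analog of Lemma \ref{eta-upper lemma}, stated for $E(m,n,Q,\Psi,0)$ via Rynne's quantity $\eta(Q,\Psi)$) together with the $\geq$ half coming from $\dim_H \geq \dim_F$ and the Fourier-dimension bound; and to obtain the Fourier bound by verifying the hypothesis \eqref{main-thm-2 e1} of Theorem \ref{main-thm-2}. First I would compute $\eta(Q,\Psi)$. Under \eqref{mn-app e1}, for any $\lambda' > \lambda$ we have $\psi(M) < M^{-\lambda'}$ for all large $M$, so for $\eta = (m+n)/(1+\lambda') + \delta$ the sum $\sum_{q \neq 0} |q|^m (\Psi(q)/|q|)^{\eta}$ is dominated by $\sum_{q \in \ZZ^n, q\neq 0} |q|^{m - (1+\lambda')\eta}$, and $m - (1+\lambda')\eta < -n$ for small $\delta$, giving convergence; conversely, using $\limsup$ in \eqref{mn-app e1} and the density hypothesis \eqref{mn-app e3} along $\mathcal{M}$ one shows divergence for $\eta$ slightly below $(m+n)/(1+\lambda)$, because on $Q(M)$ one has $\Psi(q) \approx M^{-\lambda}$ and $|q| \approx M$, so the block sum over $Q(M)$ is $\approx |Q(M)| M^{m} (M^{-\lambda}/M)^{\eta} \gtrsim M^{n} M^{-n} \cdot M^{m - (1+\lambda)\eta} h(M)^{-1}$, which fails to be summable once $m - (1+\lambda)\eta > 0$ thanks to \eqref{mn-app e2}. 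Hence $\eta(Q,\Psi) = (m+n)/(1+\lambda)$, and Rynne's formula (or its $\leq$ half plus the lower bound we are about to prove) gives $\dim_H E(m,n,Q,\Psi,0) = \min\{m(n-1) + (m+n)/(1+\lambda), mn\}$.

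Next I would establish the Fourier bound. Fix $\lambda' > \lambda$. By \eqref{mn-app e1}, $\psi(M) > M^{-\lambda'}$ for all large $M$, so (since $\epsilon(M) = \min_{q \in Q(M)} \psi(|q|)$ and $|q| \le M$ on $Q(M)$, while $\psi$ need not be monotone — here I would instead use that \eqref{mn-app e1} forces $\psi(M') > (M')^{-\lambda'}$ for \emph{all} large $M'$, hence $\epsilon(M) > M^{-\lambda'}$ for all large $M$ because every $|q|$ with $M/2 < |q_j| \le M$ satisfies $|q| > M/2$, giving $\psi(|q|) > |q|^{-\lambda'} \gtrsim M^{-\lambda'}$). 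Combining $\epsilon(M) \gtrsim M^{-\lambda'}$ with \eqref{mn-app e3}, for $M \in \mathcal{M}$ large we get
$$
|Q(M)| \epsilon(M)^{a} h(M) \gtrsim \frac{M^{n}}{h(M)} \cdot M^{-a\lambda'} h(M) = M^{n - a\lambda'},
$$
so taking $a = n/(1+\lambda')$ makes the right side $\gtrsim M^{a} = M^{n/(1+\lambda')}$, i.e. \eqref{main-thm-2 e1} holds (absorbing the implied constant by enlarging $h$ by a constant factor, which does not affect \eqref{mn-app e2}). Theorem \ref{main-thm-2} then yields a measure $\mu$ on $E(m,n,Q,\Psi,0)$ with $|\widehat\mu(\xi)| \lesssim |\xi|^{-a} \exp(\ln|\xi|/\ln\ln|\xi|) h(4|\xi|)$. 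Since $a = n/(1+\lambda')$ and, by \eqref{mn-app e2}, both $\exp(\ln|\xi|/\ln\ln|\xi|)$ and $h(4|\xi|)$ are $|\xi|^{o(1)}$, we get $|\widehat\mu(\xi)| \lesssim |\xi|^{-n/(1+\lambda') + \eps}$ for every $\eps > 0$; letting $\lambda' \downarrow \lambda$ gives $\dim_F E(m,n,Q,\Psi,0) \ge 2n/(1+\lambda)$. When $2n/(1+\lambda) > mn$ one caps at $mn$ using that $E \subseteq \RR^{mn}$, yielding the claimed $\min\{2n/(1+\lambda), mn\}$; in that regime the same calculation also pushes the Hausdorff lower bound up to $mn$, matching the upper bound $mn$ from Lemma-type considerations, so the Hausdorff formula's minimum is consistent.

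I expect the main obstacle to be the non-monotonicity of $\psi$: every step where one wants to pass from control of $\psi(M)$ to control of $\epsilon(M) = \min_{q \in Q(M)}\psi(|q|)$ must route through the two-sided estimate \eqref{mn-app e1}, which controls $\psi(M')$ for \emph{all} large $M'$ rather than along a subsequence, and one must be careful that $Q(M)$ lies in the annulus $M/2 < |q| \le M$ (here $|q| = \max_j|q_j|$, and the defining inequalities $M/2 < |q_j| \le M$ for all $j$ give exactly $M/2 < |q| \le M$), so $\psi(|q|) \ge |q|^{-\lambda'} \ge M^{-\lambda'}$ up to a constant. A secondary bookkeeping point is matching the exponent: Theorem \ref{main-thm-2} delivers decay rate $a$, which corresponds to Fourier dimension $2a$, and with $a \to n/(1+\lambda)$ this is exactly $2n/(1+\lambda)$, consistent with the $m=n=1$ case of Theorem \ref{cor-8}. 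Everything else — the convergence/divergence dichotomy for $\eta$, absorbing $|\xi|^{o(1)}$ factors, and the truncation at $mn$ — is routine.
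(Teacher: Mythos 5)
Your proposal matches the paper's proof: compute $\eta(Q,\Psi)=(m+n)/(1+\lambda)$ by separate convergence and divergence estimates, invoke Rynne's formula for the Hausdorff dimension, and verify hypothesis \eqref{main-thm-2 e1} of Theorem \ref{main-thm-2} with $a=n/(1+\lambda')$ to obtain the Fourier bound. Two small cautions: your parenthetical ``or its $\leq$ half plus the lower bound we are about to prove'' does not work, because $\min\{2n/(1+\lambda),mn\}$ is in general strictly smaller than $\min\{m(n-1)+(m+n)/(1+\lambda),mn\}$ (e.g.\ $m=4$, $n=2$, $\lambda=2$ gives $4/3$ vs.\ $6$), so the full $\geq$-half of Rynne's theorem is genuinely needed; and in your divergence estimate the block sum over $Q(M)$ should be $\gtrsim M^{\,n+m-(1+\lambda)\eta}h(M)^{-1}$ (the factor $M^{n}$ from $|Q(M)|$ must survive, not cancel against an extraneous $M^{-n}$), which is exactly what gives divergence for all $\eta<(m+n)/(1+\lambda)$.
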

\begin{proof}
Since $\Psi$ is bounded, $\lambda \geq 0$.

Let $\lambda^{\prime} < \lambda$ and $\delta > 0$. By \eq{mn-app e1}, $\psi(M) < M^{-\lambda^{\prime}}$ for all large $M \in \mathbb{N}$. It follows that
$$
\sum_{\substack{q \in Q \\ q \neq 0}} |q|^m \rbr{\frac{\Psi(q)}{|q|}}^{(m+n+\delta)/(1+\lambda^{\prime})} 
\lesssim 
\sum_{\substack{q \in Q \\ q \neq 0}} |q|^{-(n+\delta)} < \infty.
$$
Since $\lambda^{\prime} < \lambda$ and $\delta > 0$ are arbitrary, we have 
$$
\sum_{\substack{q \in Q \\ q \neq 0}} |q|\rbr{\frac{\Psi(q)}{|q|}}^{\eta} < \infty \quad \forall \eta > \frac{m+n}{1+\lambda}.
$$
Let $\lambda^{\prime} > \lambda$ and $\delta > 0$. By \eq{mn-app e1}, $\psi(M) > M^{-\lambda^{\prime}}$ for all large $M \in \mathbb{N}$. Choose a sequence $(M_k)_{k \in \NN}$ of numbers in $\mathcal{M}$ such that $M_k \geq 2M_{k-1}$. Then
\begin{align*}
\sum_{\substack{q \in Q \\ q \neq 0}} |q|^m \rbr{\frac{\Psi(q)}{|q|}}^{(m+n-\delta)/(1+\lambda^{\prime})}
&\gtrsim 
\sum_{\substack{q \in Q \\ q \neq 0}} |q|^{-(n-\delta)}
\gtrsim
\sum_{k \in \NN} \sum_{q \in Q(M_k)} |q|^{-(n-\delta)} \\
&\gtrsim
%\sum_{k \in \NN} M_{k}^{-(n-\delta)} \sum_{q \in Q(M_k)} 1 
\sum_{k \in \NN} M_{k}^{-(n-\delta)} |Q(M_k)|
\gtrsim
\sum_{k \in \NN} M_{k}^{\delta}(h(M_k))^{-1} \\
&\gtrsim
\sum_{k \in \NN} M_{k}^{\delta/2} = \infty
\end{align*}
Since $\lambda^{\prime} > \lambda$ and $\delta > 0$ are arbitrary, we have 
$$
\sum_{\substack{q \in Q \\ q \neq 0}} |q|\rbr{\frac{\Psi(q)}{|q|}}^{\eta} = \infty \quad \forall \eta < \frac{m+n}{1+\lambda}.
$$
By the result of Rynne \cite{Rynne-1998} for $\dim_{H} E(m,n,Q,\Psi,0)$ discussed in Section \ref{motivation-2}, we have
$$
\dim_{H} E(m,n,Q,\Psi,0) = \min \cbr{m(n-1) + \frac{m+n}{1+\lambda},mn}.
$$
If $\lambda = \infty$, this argument shows $\dim_{H} E(m,n,Q,\Psi,0) = m(n-1)$.

Assume $0 \leq \lambda < \infty$. Let $\lambda^{\prime} > \lambda$. By \eq{mn-app e1}, $\psi(M) > M^{-\lambda^{\prime}}$ for all large $M \in \NN$. Therefore $\epsilon(M) > M^{-\lambda^{\prime}}$ for all large $M \in \NN$. After removing finitely many elements of $\mathcal{M}$, we have $\epsilon(M) > M^{-\lambda^{\prime}}$ for all $M \in \mathcal{M}$. Combining this with \eq{mn-app e3}, we see that \eq{main-thm-2 e1} holds with $a = n/(1+\lambda^{\prime})$. %According to Theorem \ref{main-thm-2}, $E(m,n,Q,\Psi,\theta)$ supports a probability measure $\mu$ such that
%\begin{align*}
%|\widehat{\mu}(\xi)| \lesssim (1+|\xi|)^{-\frac{n}{1+\lambda^{\prime}}+\frac{1}{\ln \ln |\xi|}} h(4|\xi|) \quad \forall \xi \in \RR^{mn}, |\xi| > e.
%\end{align*}
%By \eq{mn-app e2} and because $\lambda^{\prime} > \lambda$ is arbitrary, 
%$$
%\dim_{F} E(Q,\Psi,\theta) \geq \frac{2n}{1+\lambda}.
%$$
As $\lambda^{\prime} > \lambda$ is arbitrary, Theorem \ref{main-thm-2} implies 
$$
\dim_{F} E(m,n,Q,\Psi,0) \geq \min\cbr{\frac{2n}{1+\lambda},mn}.
$$
If $\lambda = \infty$,  the desired lower bound is $\dim_{F} E(m,n,Q,\Psi,0) \geq 0$, which holds by definition.
\end{proof}

With the additional assumption $\frac{m+n}{m-1} < \lambda + 1 < 2n$, Theorem \ref{mn-app} implies that $E(m,n,Q,\Psi,0)$ is a subset of $\RR^{mn}$ with Fourier dimension strictly between $1$ and $mn-1$. 
As a concrete example, if $m=4$, $n=2$, $\lambda = 2$, $Q = \ZZ^n$, and $\Psi(q) = |q|^{-\lambda}$, then Theorem \ref{mn-app} implies 
$$
1 < \frac{4}{3} = \frac{2n}{1+\lambda} \leq \dim_{F} E \leq \dim_{H} E = m(n-1) + \frac{m+n}{1+\lambda} = 6 < 7 = mn-1,
$$
where we have put $E = E(m,n,Q,\Psi,0)$ for brevity.
%As a more concrete example, one make take 

%For brevity, put $E = E(m,n,Q,\Psi,0)$. With the additional assumption $\frac{m+n}{m-1} < \lambda + 1 < 2n$, Theorem \ref{mn-app} implies that $E$ is a subset of $\RR^{mn}$ %with Fourier dimension strictly between $1$ and $mn-1$. 
%As a concrete example, if $m=4$, $n=2$, $\lambda = 2$, $Q = \ZZ^n$, and $\Psi(q) = |q|^{-\lambda}$, then Theorem \ref{mn-app} implies 
%$$
%1 < \frac{4}{3} = \frac{2n}{1+\lambda} \leq \dim_{F} E \leq \dim_{H} E = m(n-1) + \frac{m+n}{1+\lambda} = 6 < 7 = mn-1.
%$$

%%%%%%%%%%%%%%%%%%%%%%%%%%%%%%%%%%%%%%%%%%%%%%%%%%%%%%%%%%%%%%%%%%%%%%%%
%%%%%%%%%%%%%%%%%%%%%%%%%%%%%%%%%%%%%%%%%%%%%%%%%%%%%%%%%%%%%%%%%%%%%%%%
%%%%%%%%%%%%%%%%%%%%%%%%%%%%%%%%%%%%%%%%%%%%%%%%%%%%%%%%%%%%%%%%%%%%%%%%
%%%%%%%%%%%%%%%%%%%%%%%%%%%%%%%%%%%%%%%%%%%%%%%%%%%%%%%%%%%%%%%%%%%%%%%%
\section{Remarks on the proof of Theorems \ref{main-thm-1} and \ref{main-thm-2}}\label{remarks on proof}

Since Theorem \ref{main-thm-2} is a generalization of Theorem \ref{main-thm-1}, we will give a single unified proof. The proof is in Sections \ref{sec-notation}--\ref{completing the proof}. In this section, we will outline the proof and explain its novel aspects. 

The proof of Theorems \ref{main-thm-1} and \ref{main-thm-2} is essentially a generalization of the proofs of the theorems of Kaufman \cite{Kaufman} and Bluhm \cite{Bluhm-thesis} mentioned in Section \ref{motivation-1}. The reformulations of Kaufman's proof by Bluhm \cite{Bluhm-2} and Wolff \cite{Wolff} were also valuable guides.

In order to explain the novel aspects of the proof of Theorems \ref{main-thm-1} and \ref{main-thm-2}, we will begin with an outline of Kaufman's proof and then gradually generalize it as we build towards the proof of Theorem \ref{main-thm-1} and Theorem \ref{main-thm-2}.

All the proofs have the same general form. The measure $\mu$ is defined as the weak limit of a sequence absolutely continuous measures $(\mu_k)_{k=0}^{\infty}$. The density of $\mu_k$ is the product of functions $\chi_0 F_{M_1} \cdots F_{M_{k}}$. Here $\chi_0$ is a bump function intended to restrict the support of the measures to a common compact set, and $(M_k)_{k=1}^{\infty}$ is a sequence of positive real numbers (whose precise definition will not be discussed in this outline). 
%depends on properties of the functions $F_M$. 
The functions $F_M$ are designed to have two important properties. The first property is that the support of $F_M$ is such that the infinite product $\prod_{i=1}^{\infty}F_{M_i}$, and hence $\mu$, is supported on the appropriate version of $E(Q,\Psi,\theta)$ (or $E(m,n,Q,\Psi,\theta)$). The second property of $F_M$ is a Fourier decay estimate. The desired Fourier decay estimate on $\mu$ is ultimately deduced from this Fourier decay estimate on $F_{M}$. The functions $F_M$ are the key to the proof, so our outline will focus on them.

Kaufman \cite{Kaufman} constructed a measure $\mu$ on $E(\ZZ,\Psi_{\tau},0)$, where $\Psi_{\tau}(q) = |q|^{-\tau}$ and $\tau > 1$, with 
\begin{align*}%\label{main-thm-1 e2}
|\widehat{\mu}(\xi)| = |\xi|^{-1/(1+\tau)} o(\ln |\xi|) \quad \text{as $|\xi| \rightarrow \infty$}.
\end{align*}
We will outline a slightly simplified version of Kaufman's proof that gives a slightly slower Fourier decay estimate. The proof of Theorem \ref{main-thm-1} and Theorem \ref{main-thm-2} is closer to this simplified version than it is to Kaufman's original proof. 
%Kaufman's measure $\mu$ is supported on $E(\mathcal{P},\Psi_{\tau},0) \subseteq E(\ZZ,\Psi_{\tau},0)$, where $\mathcal{P}$ is the set of prime numbers. 
%We shall actually outline a simplified version of Kaufman's proof that yields a measure $\mu%$ on $E(\ZZ,\Psi_{\tau},0)$ satisfying the weaker bound
%\begin{align*}%\label{main-thm-1 e2}
%|\widehat{\mu}(\xi)| \lesssim |\xi|^{-1/(1+\tau)} \ln^2 |\xi| \quad \forall \xi \in \RR, |%%%\xi| > e.
%\end{align*}
%This simplified version is actually closer to the proofs of Theorem  \ref{main-thm-1} and Theorem \ref{main-thm-2} than is Kaufman's full proof. From now on, when we refer to Kaufman's proof, we are actually referring to the simplified version of Kaufman's proof.
%
%We will not say anything more about the distinction between simplified version of Kaufman proof simply as Kaufman's proof.  
%
%Let $\mathcal{P}$ be the set of prime numbers, let $\mathcal{P}(M) = \cbr{q \in \mathcal{P} : M/2 < |q| \leq M }$, let $\epsilon(M) =\Psi_{\tau}(M) = M^{-\tau}$, and let $\phi: \RR \rightarrow \RR$ be any $C^K$ function with support contained in $[-1,1]$ and $K$ sufficiently large.
Define 
$$
F_{M}(x) = \frac{1}{|\mathcal{P}(M)|} \sum_{q \in \mathcal{P}(M)} \sum_{k \in \ZZ} \epsilon(M)^{-1}\phi(\epsilon(M)^{-1}(xq  - k))
\quad \forall x \in \RR.
$$
Here $\mathcal{P}$ is the set of prime numbers, $\mathcal{P}(M) = \cbr{q \in \mathcal{P} : M/2 < |q| \leq M }$, $\epsilon(M) = \Psi_{\tau}(M) = M^{-\tau}$, and $\phi: \RR \rightarrow \RR$ is an arbitrary $C^K$ function with support contained in $[-1,1]$ and $K$ sufficiently large. 
Since $\epsilon(M) \leq \Psi_{\tau}(q)$ for all $q \in \mathcal{P}(M)$, the support of $F_M$ is contained in
$$
\cbr{x \in \RR : \| qx \| \leq \Psi_{\tau}(q) \text{ for some $q \in \mathcal{P}(M)$}}.
$$
Consequently, if $(M_k)_{k=1}^{\infty}$ grows quickly enough, the support of $\mu$ is contained in $E(\mathcal{P},\Psi_{\tau},0)$, which is a subset of $E(\ZZ,\Psi_{\tau},0)$. We now describe the key Fourier decay estimate on $F_{M}$. Basic properties of the Fourier transform yield
$$
|\widehat{F_{M}}(\ell)| \lesssim \dfrac{|\mathcal{P}(M) \cap D(\ell)|}{|\mathcal{P}(M)|} (1 + \epsilon(M) M^{-1} |\ell|)^{-K} \quad \forall \ell \in \ZZ,
$$
where $D(\ell)$ is the set of integers which divide $\ell$. 
We estimate each factor on the right-hand side separately.
First, we require $K \geq \frac{1}{1+\tau}$ so that
$$
(1 + \epsilon(M) M^{-1} |\ell|)^{-K} \leq (\epsilon(M) M^{-1} |\ell|)^{-1/(1+\tau)} = M |\ell|^{-1/(1+\tau)}.
$$
Next, by the fundamental theorem of arithmetic, 
$$
|\mathcal{P}(M) \cap D(\ell)| \leq 2 \frac{\ln |l|}{\ln M} \quad \forall \ell \in \ZZ, \ell \neq 0, M \geq 2.
$$
Finally, by the density of the primes, 
$$
|\mathcal{P}(M)| \gtrsim \frac{M}{\ln M} \quad \forall M \geq 2.
$$
%Therefore
%$$ |\widehat{F_{M}}(\ell)| \lesssim \dfrac{\ln|\ell|}{M} (1 + \epsilon(M) M^{-1} |\ell|)^{-K} \quad \forall \ell \in \ZZ, ell \neq 0, M > 4. 
%$$ %We require now that $K > \frac{1}{1+\tau}$
Putting it all together, we obtain 
$$
|\widehat{F_{M}}(\ell)| \lesssim |\ell|^{-1/(1+\tau)} \ln|\ell| \quad \forall \ell \in \ZZ, \ell \neq 0, M \geq 2.
$$
%If $K$ is large enough, we can use this to deduce
%$$
%|\widehat{\mu}(|\xi|)| \lesssim |\xi|^{-1/(1+\tau)} \delta(|\xi|) \ln |\xi|  \quad \forall \xi \in \RR, |\xi| > e,
%$$
%for any increasing function $\delta:(e,\infty) \rightarrow (0,\infty)$ with $\lim_{x \rightarrow \infty} \delta(x) = \infty$ (such as $\delta(x) = \ln \ln x$).
If $K > 1 + a$, we can use this to deduce (for instance)
$$
|\widehat{\mu}(|\xi|)| \lesssim |\xi|^{-1/(1+\tau)} \delta(|\xi|) \ln^{1+\delta} |\xi|  \quad \forall \xi \in \RR, |\xi| > e,
$$
for any prescribed $\delta > 0$.
%The details of this deduction are omitted because they will be amply explained in the full proof of Theorem \ref{main-thm-1} and Theorem \ref{main-thm-2}.
%Bluhm \cite{Bluhm-thesis} extended Kaufman's result to $E(\ZZ,\Psi,0)$ with $\Psi(q) = \psi(|q|)$ and $\psi: \NN \rightarrow (0,\infty)$ decreasing.
%Bluhm \cite{Bluhm-thesis} extended Kaufman's result by constructing a measure $\mu$ on $E(\ZZ,\Psi,0)$ with $\Psi(q) = \psi(|q|)$ and $\psi: \NN \rightarrow (0,\infty)$ decreasing such that
%\begin{align*}%\label{main-thm-1 e2}
%|\widehat{\mu}(\xi)| \lesssim |\xi|^{-1/(1+\tau)} \ln |\xi| \quad \forall \xi \in \RR, |\xi| > e.
%\end{align*}
%Technically, Bluhm \cite{Bluhm-thesis}
%Technically, we have outlined only a simplified version of Kaufman's proof that leads to a weaker estimate like
%$$
%|\widehat{\mu}(|\xi|)| \lesssim |\xi|^{-1/(1+\tau)} \ln^2 |\xi| \quad \forall \xi \in \RR, |\xi| > e.
%$$
%However, this simplified version is actually closer to our proofs of Theorem \ref{main-thm-1} and Theorem \ref{main-thm-2} than is Kaufman's full proof.

Our next step will be generalizing Kaufman's argument to $E(Q,\Psi_{\tau},0)$, where $Q$ is any infinite subset of $\ZZ$. 
%We emphasize that $Q$ is not required to have any specific arithmetic structure. 
We now take
$$
F_{M}(x) = \frac{1}{|Q(M)|} \sum_{q \in Q(M)} \sum_{k \in \ZZ} \epsilon(M)^{-1}\phi(\epsilon(M)^{-1}(xq  - k))
\quad \forall x \in \RR,
$$ 
where $Q(M) = \cbr{q \in Q : M/2 < |q| \leq M }$, $\epsilon(M) = \Psi_{\tau}(M) = M^{-\tau}$, and $\phi: \RR \rightarrow \RR$ is a $C^K$ function with support contained in $[-1,1]$ and $K$ sufficiently large. 
As before, since $\epsilon(M) \leq \Psi_{\tau}(q)$ for all $q \in Q(M)$, the support of $F_M$ is contained in
$$
\cbr{x \in \RR : \| qx \| \leq \Psi_{\tau}(q) \text{ for some $q \in Q(M)$}},
$$
and therefore the support of $\mu$ is contained in $E(Q,\Psi_{\tau},0)$, provided $(M_k)_{k=1}^{\infty}$ grows sufficiently quickly. 
The Fourier decay estimate on $F_{M}$ is different from the one in Kaufman's proof. It starts the same way, with the bound
\begin{align}\label{start decay outline}
|\widehat{F_{M}}(\ell)| \lesssim \dfrac{|Q(M) \cap D(\ell)|}{|Q(M)|} (1 + \epsilon(M) M^{-1} |\ell|)^{-K} \quad \forall \ell \in \ZZ,
\end{align}
%where $\tau_{Q(M)}(|\ell|)$ is the number of divisors of $|\ell|$ in $Q(M)$. 
where $D(\ell)$ is the set of integers which divide $\ell$.
We estimate each factor on the right-hand side separately. 
First, we require $K \geq a$ so that
\begin{align*}%\label{third factor}
(1 + \epsilon(M) M^{-1} |\ell|)^{-K} \leq (\epsilon(M) M^{-1} |\ell|)^{-a}
\end{align*}
%Since $Q$ may contain no primes at all, 
Since $Q$ is not required to have any specific arithmetic structure, we cannot estimate $|Q(M) \cap D(\ell)|$ as simply as in Kaufman's argument. 
%Instead we bound $|Q(M) \cap D(\ell)|$ by the larger quantity $2\tau(|\ell|)$, where $\tau(|\ell|)$ is the number positive divisors of $|\ell|$. Then we apply the classical divisor bound of Wigert \cite{Wigert}, which says (in particular) that for every $\zeta > \ln 2$ there is a $v_{\zeta} \in \NN$ such that
%\begin{align}\label{wigert outline}
%\tau(|\ell|) \leq \exp\rbr{\frac{\zeta \ln|\ell|}{\ln \ln |\ell|}} \quad \forall \ell \in \ZZ, |\ell| \geq v_{\zeta}.
%\end{align}
Instead we bound $|Q(M) \cap D(\ell)|$ using the divisor bound of Wigert \cite{Wigert} to obtain that for every $\zeta > \ln 2$ there is an $L_{\zeta} \in \NN$ such that
$$
|Q(M) \cap D(\ell)| \leq \exp\rbr{\frac{\zeta \ln|\ell|}{\ln \ln |\ell|}} \quad \forall \ell \in \ZZ, |\ell| \geq L_{\zeta}.
$$
Finally, we need a lower bound on $|Q(M)|$. Since $Q$ is an arbitrary infinite set of integers, we cannot say much in general. But we know there must exist a number $a \geq 0$, an increasing function \mbox{$h:(0,\infty) \rightarrow (0,\infty)$}, and an unbounded set $\mathcal{M} \subseteq (0,\infty)$ such that 
\begin{align}\label{main-thm-1 e1 again}
|Q(M)|\epsilon(M)^{a} h(M) \geq M^{a} \quad \forall M \in \mathcal{M}.
\end{align}
Though it is not necessary for the proof, we can always choose $h$ so that $\lim_{x \rightarrow \infty} \ln h(x) / ln x = 0$. 
Putting it all together, we obtain
%By applying \eq{wigert outline}, \eq{main-thm-1 e1 again}, and \eq{third factor} to \eq{start decay outline}, we obtain
$$
|\widehat{F_{M}}(\ell)| \lesssim |\ell|^{-a} \exp\rbr{ \frac{\zeta \ln |\ell|}{\ln \ln |\ell|} } h(M) \quad \forall \ell \in \ZZ, |\ell| \geq L_{\zeta}, M \in \mathcal{M}.
$$
From this, provided $K > 1 + a$, it can be deduced that
$$
|\widehat{\mu}(|\xi|)| \lesssim |\xi|^{-a} \exp\rbr{ \frac{ \ln |\xi|}{\ln \ln |\xi|} } h(4 |\xi|) \quad \forall \xi \in \RR, |\xi| > e.
$$

We discuss this result briefly before moving on to the next generalization. Note that if $\lim_{x \rightarrow \infty} \ln h(x) / ln x = 0$ (which we can always achieve), then $\exp\rbr{ \frac{ \ln |\xi|}{\ln \ln |\xi|} } h(4|\xi|)$ goes to $\infty$ as $|\xi| \rightarrow \infty$ slower than any power of $|\xi|$, just like $\ln|\xi|$. So having the factor $\exp(\ln|\xi| / \ln \ln |\xi|) h(4|\xi|)$ rather than $\ln|\xi|$ does not cost us anything in terms of Fourier dimension. However, sparsity of $Q$ will decrease the exponent $a$ and (therefore) the Fourier dimension lower bound. Consider the following two examples. 
%First observe that Kaufman's argument applied to $E(\mathcal{P},\Psi_{\tau},0)$ leads to $\dim_F E(\mathcal{P},\Psi_{\tau},0) \geq 2/(1+\tau)$. 
First suppose $Q$ is the set of primes shifted up by $1$, i.e., $Q = \cbr{p+1 : p \in \mathcal{P}}$. Unlike the set of primes $\mathcal{P}$, the shifted set $Q$ has no obviously useful arithmetic structure. However, $\mathcal{P}$ and $Q$ have essentially the same density: $|\mathcal{P}| \approx |Q(M)| \approx M/\log M$ for all $M$ large enough. 
In fact, \eq{main-thm-1 e1 again} holds with $a = 1/(1+\tau)$, $h(x) = 4\ln(x+1)$, and $\mathcal{M} = [3,\infty)$, so that $|\widehat{\mu}(|\xi|)| \lesssim |\xi|^{-1/(1+\tau)} \exp\rbr{ \frac{ \ln |\xi|}{\ln \ln |\xi|} } \ln(4|\xi|+1)$ for $|\xi| > e$, and therefore $\dim_F E(Q,\Psi_{\tau},0) \geq 2/(1+\tau)$. 
For comparison, Kaufman's argument applied to $E(\mathcal{P},\Psi_{\tau},0)$ leads to $|\widehat{\mu}(|\xi|)| \lesssim |\xi|^{-1/(1+\tau)} \ln|\xi|$ for $|\xi| > e$ and thus the same lower bound $\dim_F E(\mathcal{P},\Psi_{\tau},0) \geq 2/(1+\tau)$. For the second example, suppose $Q$ is the set of perfect squares, i.e., $Q = \cbr{n^2 : n \in \NN}$, which is much sparser than $\mathcal{P}$. Indeed $|Q(M)| \approx M^{1/2}$ for all $M$ large enough, and \eq{main-thm-1 e1 again} holds with with $a = 1/2(1+\tau)$, $h(x) = 10$, and $\mathcal{M} = [9,\infty)$. So we get  $|\widehat{\mu}(|\xi|)| \lesssim |\xi|^{-1/2(1+\tau)} \exp\rbr{ \frac{ \ln |\xi|}{\ln \ln |\xi|} }$ for $|\xi| > e$, and therefore $\dim_F E(Q,\Psi_{\tau},0) \geq 1/(1+\tau)$.

%For example, if $Q$ is the set of primes, \label{main-thm-1 e1 again} holds with $a = \frac{1}{1+\tau}$, $h(x) = 2 \ln(x + 1)$, and $\mathcal{M} = [3299,\infty)$. In fact, because $\epsilon(M) = M^{-\tau}$, with that choice of paramaters \label{main-thm-1 e1 again} simply says 
%$$
%|Q(M)| \geq \frac{M}{2 \ln (M+1)} \quad \forall M \geq 3299.
%$$
%As another example, if $Q$ is the set of perfect squares, \label{main-thm-1 e1 again} holds with $a = \frac{1}{2(1+\tau)}$, $h(x) = \frac{1}{4}$, and $\mathcal{M} = [169,\infty)$. With %this choice of paramaters \label{main-thm-1 e1 again} says 
%$$
%|Q(M)| \geq \frac{1}{4} M^{1/2} \quad \forall M \geq 169.
%$$

Recall that Bluhm \cite{Bluhm-thesis} extended Kaufman's result to $E(\ZZ,\Psi,0)$ with $\Psi(q) = \psi(|q|)$ and $\psi: \NN \rightarrow (0,\infty)$ decreasing. The next step in our outline is to consider $E(Q,\Psi,0)$, where $Q$ is any infinite subset of $\ZZ$, and $\Psi$ is any function mapping $\ZZ \rightarrow [0,\infty)$ that is positive on $Q$. 
In fact, after replacing all instances of $\Psi_{\tau}$ by $\Psi$, the preceding argument goes through almost word for word. We just need to modify the definition $\epsilon(M)$. The only important feature of $\epsilon(M)$ in the preceding argument is that it is a positive number satisfying $\epsilon(M) \leq \Psi_{\tau}(q)$ for all $q \in Q(M)$. So we simply replace $\epsilon(M) =\Psi_{\tau}(M) = M^{-\tau}$ by $\epsilon(M) = \min_{q \in Q(M)}\Psi(q)$.

%In fact, the preceding argument goes through word for word except for two small modifications. First, we need to modify the definition of $\epsilon(M)$. The only important feature of $\epsilon(M)$  in the preceding argument is that it is a positive number satisfying $\epsilon(M) \leq \Psi_{\tau}(q)$ for all $q \in Q(M)$. So we simply replace $\epsilon(M) =\Psi_{\tau}(M) = M^{-\tau}$ by $\epsilon(M) = \min_{q \in Q(M)}\Psi(q)$. The second modification to the argument is simply to replace all instances of $\Psi_{\tau}$ by $\Psi$.

The next step in our outline is to generalize to $E(Q,\Psi,\theta)$ with $\theta$ being any real number. We again need only a very minor modification in the argument. In the definition of $F_M$, we replace 
$xq  - k$ by $xq  - k - \theta$. So we now take
$$
F_{M}(x) = \frac{1}{|Q(M)|} \sum_{q \in Q(M)} \sum_{k \in \ZZ} \epsilon(M)^{-1}\phi(\epsilon(M)^{-1}(xq  - k - \theta))
\quad \forall x \in \RR.
$$ 
Then the support of $F_M$ is contained in
$$
\cbr{x \in \RR : \| qx - \theta \| \leq \Psi(q) \text{ for some $q \in Q(M)$}},
$$
and therefore the support of $\mu$ is contained in $E(Q,\Psi,\theta)$. The replacement of $xq  - k$ by $xq  - k - \theta$ leaves the estimate \eq{start decay outline} unchanged. (This may not be easy to see here, but it is easy to see when one reads the details of the proof of Theorem \ref{main-thm-1} and Theorem \ref{main-thm-2} in Section \ref{key function}). Therefore the rest of argument proceeds exactly as above.

The final step in our outline is extending the argument to $E(m,n,Q,\Psi,\theta)$, where $m, n \in \NN$, $Q$ is an infinite subset of $\ZZ^n$, $\Psi: \ZZ^n \rightarrow [0,\infty)$ is positive on $Q$, and $\theta \in \RR^m$. To define the functions $F_M$, we need a few preliminaries. Define 
\begin{align*}
Q(M) &= \cbr{q \in Q : M/2 < |q_j| \leq M \quad \forall 1 \leq j \leq n }, \\
\epsilon(M) &= \min_{q \in Q(M)} \Psi(q),
\end{align*}
and let $\phi: \RR \rightarrow \RR$ be any $C^K$ function with support contained in $[-1,1]^m$ and $K$ sufficiently large. 
For $x = (x_{11}, \ldots, x_{1n}, \ldots, x_{m1}, \ldots, x_{mn}) \in \RR^{mn}$ and $q = (q_1,\ldots,q_n) \in \ZZ^n$, define the product $xq$ by identifying $x$ with the $m \times n$ matrix whose $ij$-entry is $x_{ij}$.
Finally, define
$$
F_{M}(x) = \frac{1}{|Q(M)|} \sum_{q \in Q(M)} \sum_{k \in \ZZ^{m}} \epsilon(M)^{-1}\phi(\epsilon(M)^{-1}(xq  - k - \theta))
\quad \forall x \in \RR^{mn}.
$$
Since $\epsilon(M) \leq \Psi(q)$ for all $q \in Q(M)$, the support of $F_M$ is contained in
$$
\cbr{x \in \RR : \max_{1 \leq i \leq m} \| \sum_{j=1}^{n} q_{j} x_{ij} - \theta_{i} \| \leq \Psi(q) \text{ for some $q \in Q(M)$}},
$$
and therefore the support of $\mu$ is contained in $E(m,n,Q,\Psi,\theta)$, provided $(M_k)_{k=1}^{\infty}$ grows sufficiently quickly. As before, the key Fourier decay estimate on $F_{M}$ begins with the relatively straightforward bound
\begin{align*}%\label{start decay outline multi}
|\widehat{F_{M}}(\ell)| \lesssim \dfrac{|Q(M) \cap D(\ell)|}{|Q(M)|} (1 + \epsilon(M) M^{-1} |\ell|)^{-K} \quad \forall \ell \in \ZZ^{mn}.
\end{align*}
Of course, $D(\ell)$ here is no longer the set of integers dividing $\ell$. Now $D(\ell)$ is the set of points in $\ZZ^{n}$ obeying a certain more complicated arithmetic relationship with the point $\ell \in \ZZ^{mn}$. 
%However, $|Q(M) \cap D(\ell)|$ is still bounded $2\tau(|\ell|)$. And we have by the Wigert divisor bound that for every $\zeta > \ln 2$ there is a $v_{\zeta} \in \NN$ such that
However, we still use Wigert's divisor bound to show that for every $\zeta > \ln 2$ there is an $L_{\zeta} \in \NN$ such that
\begin{align*}%\label{wigert outline multi}
|Q(M) \cap D(\ell)| \leq \exp\rbr{\frac{\zeta \ln|\ell|}{\ln \ln |\ell|}} \quad \forall \ell \in \ZZ^{mn}, |\ell| \geq L_{\zeta}.
\end{align*}
For the lower bound on $|Q(M)|$, we still know there must exist a number $a \geq 0$, an increasing function \mbox{$h:(0,\infty) \rightarrow (0,\infty)$}, and an unbounded set $\mathcal{M} \subseteq (0,\infty)$ such that 
\begin{align*}%\label{main-thm-1 e1 again multi}
|Q(M)|\epsilon(M)^{a} h(M) \geq M^{a} \quad \forall M \in \mathcal{M}.
\end{align*}
As before, we can always choose $h$ so that $\lim_{x \rightarrow \infty} \ln h(x) / ln x = 0$, but it is not necessary for the proof.
Finally, we are still permitted to require $K \geq a$ so that
\begin{align*}%\label{third factor multi}
(1 + \epsilon(M) M^{-1} |\ell|)^{-K} \leq (\epsilon(M) M^{-1} |\ell|)^{-a}.
\end{align*}
Thus we obtain
$$
|\widehat{F_{M}}(\ell)| \lesssim |\ell|^{-a} \exp\rbr{ \frac{\zeta \ln |\ell|}{\ln \ln |\ell|} } h(M) \quad \forall \ell \in \ZZ^{mn}, |\ell| \geq L_{\zeta}, M \in \mathcal{M}.
$$
If $K > mn + a$, we can then show
$$
|\widehat{\mu}(|\xi|)| \lesssim |\xi|^{-a} \exp\rbr{ \frac{ \ln |\xi|}{\ln \ln |\xi|} } h(4|\xi|) \quad \forall \xi \in \RR^{mn}, |\xi| > e.
$$

The proof of Theorems \ref{main-thm-1} and \ref{main-thm-2} constitutes Sections \ref{sec-notation}--\ref{completing the proof}. We conclude the current section by describing the contents of Sections \ref{sec-notation}--\ref{completing the proof}, so that the reader can easily find the details of the steps from the outline above. 

Section \ref{sec-notation} preemptively clarifies some potentially confusing notation for the Fourier transform. The function $\phi$ and the associated parameter $K$ are introduced in Section \ref{sec-phi-Phi}. Additionally, Section \ref{sec-phi-Phi} defines the function $\Phi^{\epsilon}_{q,\theta}$ and works out its Fourier transform. The purpose of defining the function $\Phi^{\epsilon}_{q,\theta}$ is to make it easier to establish certain properties of $F_M$. The precise definition of $D(\ell)$ for $\ell \in \ZZ^{mn}$ is also given in Section \ref{sec-phi-Phi}. In Section \ref{key function}, $F_M$ is defined in terms $\Phi^{\epsilon}_{q,\theta}$, some simple properties of $\widehat{F_M}$ are worked out using $\widehat{\Phi^{\epsilon}_{q,\theta}}$, and the support of $F_M$ is described. In Section \ref{fourier decay FM}, the key Fourier decay property of $F_M$ is established. In the course of doing so, the statement of Wigert's divisor bound and the details of how it is used to bound $|Q(M) \cap D(\ell)|$ are given. % also in Section \ref{fourier decay FM}. 
Section \ref{main lemma} contains the statement and proof of an important lemma. The lemma is used in Section \ref{completing the proof} to show that the sequence of measures $(\mu_k)_{k=0}^{\infty}$ does indeed converges weakly to a measure $\mu$ and to pass from the Fourier decay estimate on $F_M$ to the desired Fourier decay estimate on $\mu$.

%%%%%%%%%%%%%%%%%%%%%%%%%%%%%%%%%%%%%%%%%%%%%%%%%%%%%%%%%%%%%%%%%%%%%%%%
%%%%%%%%%%%%%%%%%%%%%%%%%%%%%%%%%%%%%%%%%%%%%%%%%%%%%%%%%%%%%%%%%%%%%%%%
%%%%%%%%%%%%%%%%%%%%%%%%%%%%%%%%%%%%%%%%%%%%%%%%%%%%%%%%%%%%%%%%%%%%%%%%
%%%%%%%%%%%%%%%%%%%%%%%%%%%%%%%%%%%%%%%%%%%%%%%%%%%%%%%%%%%%%%%%%%%%%%%%

\section{Proof of Theorems \ref{main-thm-1} and \ref{main-thm-2}: Notation}\label{sec-notation}

We begin the proof of Theorems \ref{main-thm-1} and \ref{main-thm-2} by clarifying some notation.

Suppose $f : \RR^d \rightarrow \CC$.
If $f \in L^1(\RR^d)$, the Fourier transform of $f$ is defined to be
$$
\widehat{f}(\xi) = \int_{\RR} f(x)e^{-2\pi i x \cdot \xi} dx \quad \forall \xi \in \RR^d.
$$
%If $f \in L^1([0,1])$ and $f$ is $1$-periodic, 
If $f \in L^1([0,1]^d)$ and $f$ is periodic for the lattice $\ZZ^d$, 
%(i.e., f(x) = f(x-k) for all $x \in \RR^d$ and $k \in \ZZ^d$), 
the Fourier transform of $f$ is defined to be
$$
\widehat{f}(\xi) = \int_{[0,1]^d} f(x)e^{-2\pi i x \cdot \xi} dx \quad \forall \xi \in \RR^d.
$$
There is no ambiguity with these definitions; 
%if $f \in L^1(\RR^d)$ and $f$ is 1-periodic, then $\widehat{f} = 0$ using either defintion. 
if $f \in L^1(\RR^d)$ and $f$ is periodic for the lattice $\ZZ^d$, then $\widehat{f} = 0$ using either definition.

%If $f \in L^1(\RR^d)$ and $f$ is periodic for the lattice $\ZZ^d$, then $\widehat{f} = 0$ using either defintion. So there is no ambiguity with these definitions.

\section{Proof of Theorems \ref{main-thm-1} and \ref{main-thm-2}: The Functions $\phi$ and $\Phi^{\epsilon}_{q,\theta}$}\label{sec-phi-Phi}

In this section, we define the function $\phi$, use it to define the function $\Phi^{\epsilon}_{q,\theta}$, and compute the Fourier transform of $\Phi^{\epsilon}_{q,\theta}$. We will use $\Phi^{\epsilon}_{q,\theta}$ to define the function $F_M$ in Section \ref{key function}.

Let $K$ be a positive integer with $K > mn+a$. Let $\phi: \RR^m \rightarrow \RR$ be a non-negative $C^{K}$ function with $\text{supp}(\phi) \subseteq [-1,1]^m$, and $\int_{\RR^m} \phi(x) dx = 1$. Then there is a $C_1 > 0$ such that
\begin{align}\label{phi-decay}
|\widehat{\phi}(\xi)| \leq C_1 (1+|\xi|)^{-K} \quad \forall \xi \in \RR^m.
\end{align}

For $\epsilon > 0$ and $x \in \RR^m$, let $\phi^{\epsilon}(x) = \epsilon^{-m}\phi(\epsilon^{-1}x)$, and 
$$
\Phi^{\epsilon}(x) = \sum_{k \in \ZZ^m} \phi^{\epsilon}(x-k).
$$
%For each x, the sum has at most finitely many non-zero terms. In fact, for any finite open interval $I$, there are $k_1,\ldots,k_n \in \ZZ$ such that $\phi^{\epsilon}(x-k) = 0$ for all $x \in  I$, $k \in \ZZ\setminus\cbr{k_1,\ldots,k_n}$. Hence $\Phi^{\epsilon} is C^{\infty}$.
%Note $\Phi^{\epsilon}$ is $C^{K}$, 1-periodic,
Note $\Phi^{\epsilon}$ is $C^{K}$, periodic for the lattice $\ZZ^m$,
and 
$$
\widehat{\Phi^{\epsilon}}(k) = \widehat{\phi^{\epsilon}}(k) = \widehat{\phi}(\epsilon k) \quad \forall k \in \ZZ^m.
$$
Therefore
\begin{align}\label{F-series-1}
\Phi^{\epsilon}(x) = \sum_{k \in \ZZ^m} \widehat{\phi}(\epsilon k) e^{ 2 \pi i k x } \quad \forall x \in \RR^{m}
\end{align}
with uniform convergence.
%continuously differentiable is enough. In fact, differentiable with derivative bounded on $(0,1)$ already gives by MVT a holder condtion, which gives uniform convergence.
%continuous and absolutely summable fourier coefficients (which the fourier decay of $\phi$ gives) also gives uniform convergence.
%differenitable and periodic implies periodic derivative. Look at limit defintion.
%
%For $q \in Q$, 
%%and $x \in \RR$, 
%define
%\begin{align*}
%\Phi^{\epsilon}_{q,\theta}(x) = \Phi^{\epsilon}(xq - \theta) = \sum_{k \in %\ZZ} \widehat{\phi}(\epsilon k) e^{ 2 \pi i k (xq - \theta) } \quad %\forall x \in \RR.
%\end{align*}
%

For $q \in \ZZ^n$, $\theta \in \RR^m$, and $x=(x_{11},\ldots,x_{1n},\ldots,x_{m1},\ldots,x_{mn}) \in \RR^{mn}$, define $xq$ by identifying $x$ with the $m \times n$ matrix whose $ij$-entry is $x_{ij}$, and define
\begin{align*}
\Phi^{\epsilon}_{q,\theta}(x) = \Phi^{\epsilon}(xq - \theta). 
%= \sum_{k \in \ZZ^m} \widehat{\phi}(\epsilon k) e^{ 2 \pi i k \cdot (xq - \theta) }.
\end{align*}
%
%\begin{comment}
%Therefore, for $\ell=(\ell_{11},\ldots,\ell_{1n},\ldots,\ell_{m1},\ldots,\ell_{mn}) \in \RR^{mn}$,
%\begin{align}\label{111}
%\widehat{\Phi^{\epsilon}_{q,\theta}}(\ell) 
%&=
%\sum_{k \in \ZZ^m} \widehat{\phi}(\epsilon k) \int_{[0,1]^{mn}} e^{2 \pi i k \cdot (xq - \theta) - \ell \cdot x} dx
%\\
%\notag
%&=
%\left\{
%\begin{array}{ll}
%e^{-2\pi i k \cdot \theta} \widehat{\phi}( \epsilon q_j^{-1} (\ell_{1j},\ldots,\ell_{mj}) ) & \text{if } \frac{\ell_{ij}}{q_j} =  \frac{\ell_{i1}}{q_1} \in \ZZ %\forall i,j \\
%0 & \text{otherwise }
%\end{array} \right.
%\end{align}
%\end{comment}
%
Note $\Phi^{\epsilon}_{q,\theta}$ is $C^{K}$ and is periodic for the lattice $\ZZ^{mn}$. %Unwrapping everything, we have
%\begin{align*}
%\Phi^{\epsilon}_{q,\theta}(x) = \sum_{k \in \ZZ^m} \epsilon^{-m}\phi(\epsilon^{-1}(xq - \theta - k)) \quad \forall x \in \RR^{mn}.
%\end{align*}
By \eq{F-series-1},
\begin{align*}%\eq{F-series-2}
\Phi^{\epsilon}_{q,\theta}(x) = \sum_{k \in \ZZ^m} \widehat{\phi}(\epsilon k) e^{ 2 \pi i k \cdot (xq - \theta) } \quad \forall x \in \RR^{mn}
\end{align*}
with uniform convergence.

%If we take the special case $m=n=1$ and $\theta = 0$, we obtain the Kaufman construction.

%Assume from now on that $q_j \neq 0$ for all $1 \leq j \leq n$.

For $\ell = (\ell_{11},\ldots,\ell_{1n},\ldots,\ell_{m1},\ldots,\ell_{mn}) \in \ZZ^{mn}$, define $\ell_j = (\ell_{1j},\ldots,\ell_{mj})$ for all $1 \leq j \leq n$ and 
$$
D(\ell) = \cbr{q \in \ZZ^n : q_{1}^{-1}\ell_1 = \cdots = q_{n}^{-1}\ell_n \in \ZZ^{m}}.
$$
Note that if $mn=1$, then $D(\ell)$ is the set of all integers that divide $\ell$.

\begin{lemma}\label{Phi-fourier}
For $\ell \in \ZZ^{mn}$ and $q \in \ZZ^n$ with $q_j \neq 0$ for all $1 \leq j \leq n$, 
\begin{align}\label{111}
\widehat{\Phi^{\epsilon}_{q,\theta}}(\ell) 
=
\left\{
\begin{array}{ll}
e^{-2 \pi i q_{1}^{-1} \ell_{1} \cdot \theta} \widehat{\phi}( \epsilon q_{1}^{-1} \ell_{1} ) & \text{if } q \in D(\ell),\\
0 & \text{otherwise. }
\end{array} \right.
\end{align}
\end{lemma}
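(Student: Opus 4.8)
The plan is to read off $\widehat{\Phi^{\epsilon}_{q,\theta}}(\ell)$ directly from the uniformly convergent Fourier expansion
$$
\Phi^{\epsilon}_{q,\theta}(x) = \sum_{k \in \ZZ^m} \widehat{\phi}(\epsilon k)\, e^{2\pi i k \cdot (xq - \theta)} \quad \forall x \in \RR^{mn}
$$
established earlier in this section, by recognizing each summand as a character on $\RR^{mn}$ and matching it against $e^{2\pi i \ell\cdot x}$, $\ell\in\ZZ^{mn}$. The key bookkeeping step is the identity, for $k\in\ZZ^m$ and $q=(q_1,\dots,q_n)\in\ZZ^n$,
$$
k\cdot(xq) = \sum_{i=1}^{m}\sum_{j=1}^{n} k_i q_j x_{ij} = \widetilde{k}\cdot x,
$$
where $\widetilde{k}\in\ZZ^{mn}$ is the point whose $j$-th block is $\widetilde{k}_j = q_j k \in \ZZ^m$ (here I use the coordinate conventions for $x$ and for $\ell\mapsto(\ell_1,\dots,\ell_n)$ fixed above). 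Thus $e^{2\pi i k\cdot(xq-\theta)} = e^{-2\pi i k\cdot\theta}\, e^{2\pi i \widetilde{k}\cdot x}$, so the series above displays $\Phi^{\epsilon}_{q,\theta}$ as a linear combination of the characters $x\mapsto e^{2\pi i \widetilde{k}\cdot x}$.

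Since $\Phi^{\epsilon}_{q,\theta}$ is $C^{K}$ and periodic for the lattice $\ZZ^{mn}$, its Fourier transform is given by integration over $[0,1]^{mn}$, and because the series converges uniformly on the compact set $[0,1]^{mn}$ while $\{e^{2\pi i \ell\cdot x}\}_{\ell\in\ZZ^{mn}}$ is orthonormal in $L^2([0,1]^{mn})$, I may integrate term by term. This yields
$$
\widehat{\Phi^{\epsilon}_{q,\theta}}(\ell) = \sum_{\substack{k\in\ZZ^m \\ \widetilde{k}=\ell}} e^{-2\pi i k\cdot\theta}\, \widehat{\phi}(\epsilon k).
$$
It remains to analyze the index set $\{k\in\ZZ^m : \widetilde{k}=\ell\}$. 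The condition $\widetilde{k}=\ell$ means precisely that $q_j k = \ell_j$ for every $1\le j\le n$. Using the hypothesis $q_j\ne 0$ for all $j$, a solution $k\in\ZZ^m$ exists if and only if $q_j^{-1}\ell_j\in\ZZ^m$ for each $j$ and these $n$ vectors all coincide — that is, if and only if $q\in D(\ell)$ — in which case the common value is $k=q_1^{-1}\ell_1$, and it is unique (uniqueness is immediate: $\widetilde{k}=\widetilde{k'}$ forces $q_j(k-k')=0$, hence $k=k'$ since some $q_j\ne 0$). Substituting $k=q_1^{-1}\ell_1$ into the single surviving term gives $\widehat{\Phi^{\epsilon}_{q,\theta}}(\ell) = e^{-2\pi i q_1^{-1}\ell_1\cdot\theta}\,\widehat{\phi}(\epsilon q_1^{-1}\ell_1)$ when $q\in D(\ell)$, while for $q\notin D(\ell)$ the sum is empty and $\widehat{\Phi^{\epsilon}_{q,\theta}}(\ell)=0$. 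This is exactly \eqref{111}.

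There is no serious obstacle here: the argument is a term-by-term computation on a uniformly convergent trigonometric series. The only points requiring care are the ones I have flagged — correctly translating the matrix product $xq$ and the definition of $D(\ell)$ into the arithmetic condition $q_j k=\ell_j$, and observing that the hypothesis $q_j\ne 0$ for all $j$ is what makes $k\mapsto\widetilde{k}$ injective, so that the sum collapses to at most one term. (That $\Phi^{\epsilon}_{q,\theta}$ is genuinely $C^K$ and $\ZZ^{mn}$-periodic, hence has honest, pointwise Fourier coefficients, was recorded just before the statement of the lemma.)
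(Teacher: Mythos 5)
Your proof is correct and follows essentially the same route as the paper's: expand $\Phi^{\epsilon}_{q,\theta}$ via the uniformly convergent Fourier series, integrate term by term over $[0,1]^{mn}$, and identify the (at most one) index $k$ for which the character matches $e^{2\pi i \ell\cdot x}$. The paper factors the integral into a product of one-dimensional integrals $\int_{[0,1]} e^{2\pi i x_{ij}(k_i q_j - \ell_{ij})}\,dx_{ij}$ rather than phrasing it via the map $k\mapsto\widetilde{k}$ and orthonormality, but the bookkeeping and conclusion are identical.
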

%
%Should I state also the one-dimensional case in this lemma?
%
\begin{proof}
As a warm-up, note that if $mn = 1$ and $\ell,q \in \ZZ$ with $q \neq 0$ we have
%If $mn = 1$, then for $\ell \in \ZZ$ we have
%
%Then $\Phi^{\epsilon}_{q,\theta}$ is $q^{-1}$-periodic and for $\ell \in \ZZ$ we have
\begin{align*}%\label{111-1dim}
\widehat{\Phi^{\epsilon}_{q,\theta}}(\ell) 
&=
\sum_{k \in \ZZ}  \int_{[0,1]} \widehat{\phi}(\epsilon k) e^{2 \pi i k (xq - \theta)} e^{- 2 \pi i \ell x} dx \\
\notag
&=
\sum_{k \in \ZZ} e^{- 2 \pi i k \theta } \widehat{\phi}(\epsilon k) \int_{[0,1]} e^{2 \pi i x (kq  - \ell)} dx \\
\notag
&=
\left\{
\begin{array}{ll}
e^{-2 \pi i q^{-1} \ell \theta} \widehat{\phi}( \epsilon q^{-1} \ell ) & \text{if } q \in D(\ell), \\
0 & \text{otherwise. }
\end{array} \right.
\end{align*}

In general, for $\ell = (\ell_{11},\ldots,\ell_{1n},\ldots,\ell_{m1},\ldots,\ell_{mn}) \in \ZZ^{mn}$ and $q \in \ZZ^n$ with $q_j \neq 0$ for all $1 \leq j \leq n$ we have
\begin{align*}%\label{111}
\widehat{\Phi^{\epsilon}_{q,\theta}}(\ell) 
&=
\sum_{k \in \ZZ^m}  \int_{[0,1]^{mn}} \widehat{\phi}(\epsilon k) e^{2 \pi i k \cdot (xq - \theta)} e^{- 2 \pi i \ell \cdot x} dx \\
\notag
&=
\sum_{k \in \ZZ^m}  e^{- 2 \pi i k \cdot \theta } \widehat{\phi}(\epsilon k) \int_{[0,1]^{mn}} e^{2 \pi i (k \cdot (xq - \theta) - \ell \cdot x)} dx \\
%\notag
%&=
%\sum_{k \in \ZZ^m}  e^{- 2 \pi i k \cdot \theta } \widehat{\phi}(\epsilon k) \int_{[0,1]^{mn}} e^{2 \pi i \sum_{i=1}^{m} \sum_{j=1}^{n}x_{ij}(k_i q_j - \ell_{ij})} dx \\
\notag
&=
\sum_{k \in \ZZ^m}  e^{- 2 \pi i k \cdot \theta } \widehat{\phi}(\epsilon k) \prod_{i=1}^{m} \prod_{j=1}^{n} \int_{[0,1]} e^{2 \pi i x_{ij}(k_i q_j - \ell_{ij})} dx_{ij} \\
\notag
&=
\left\{
\begin{array}{ll}
e^{-2 \pi i q_{1}^{-1} \ell_{1} \cdot \theta} \widehat{\phi}( \epsilon q_{1}^{-1} \ell_{1} ) & \text{if } q \in D(\ell),\\
0 & \text{otherwise. }
\end{array} \right.
\end{align*}
\end{proof}

\section{Proof of Theorems \ref{main-thm-1} and \ref{main-thm-2}: The Function $F_M$}\label{key function}

In this section, we define the function $F_M$ and discuss some of its properties.

For $M > 0$, define
$$
F_{M}(x) =  \frac{1}{|Q(M)|} \sum_{q \in Q(M)} \Phi^{\epsilon(M)}_{q,\theta}(x) \quad \forall x \in \RR^{mn}.
$$
Note $F_{M}$ is $C^{K}$ and periodic for the lattice $\ZZ^{mn}$.

By the definition of $\Phi^{\epsilon(M)}_{q,\theta}$, we can write
$$
F_{M}(x) = \frac{1}{|Q(M)|} \sum_{q \in Q(M)} \sum_{k \in \ZZ^m} \epsilon(M)^{-m}\phi(\epsilon(M)^{-1}(xq - \theta - k))
\quad \forall x \in \RR^{mn}.
%%%%= \frac{1}{|Q(M)|} \sum_{q \in |Q(M)|} \sum_{k \in \ZZ^{m}} \frac{1}{\epsilon(M)^m}\phi\rbr{\frac{xq - \theta - k}{\epsilon(M)}}
$$
Since $\text{supp}(\phi) \subseteq [-1,1]^{m}$ and $\displaystyle{ \epsilon(M) = \min_{q \in Q(M)} \Psi(q) }$, we have 
\begin{align}\label{support}
\text{supp} (F_{M}) \subseteq \{ x \in \RR^{mn} :  \max_{1 \leq i \leq m} \| \sum_{j=1}^{n} x_{ij}q_j - \theta_i \| \leq \Psi(q) \text{ for some } q \in Q(M) \}.
\end{align}
%As $F_{M}$ is a finite sum of periodic functions, it is uniformly almost periodic. 
%
%
By \eq{111}, 
\begin{align}\label{FMhat}
\widehat{F_{M}}(\ell) = \frac{1}{|Q(M)|} \sum_{q \in Q(M) \cap D(\ell)} e^{-2 \pi i q_{1}^{-1} \ell_{1} \cdot \theta} \widehat{\phi}( \epsilon q_{1}^{-1} \ell_{1} )
\quad \forall \ell \in \ZZ^{mn}.
\end{align}
As $\widehat{\phi}(0) = \int_{\RR^m} \phi(x) dx = 1$ and $D(0) = \ZZ^n$, \eq{FMhat} implies
%%%As $\widehat{\phi}(0) = \int_{\RR^m} \phi(x) dx = 1$, \eq{111} implies
\begin{align}\label{111.9}
\widehat{F_M}(0) = 1.
\end{align}
Since $\phi \geq 0$, we have $F_{M} \geq 0$, and so
\begin{align}\label{112}
|\widehat{F_M}(\ell)| \leq \widehat{F_M}(0) = 1 \quad \forall \ell \in \ZZ^{mn}.
\end{align}
If  $q \in Q(M)$ and $0 < |\ell| \leq M/2$, then for some $j_0 \in \cbr{1,\ldots,n}$ we have $M/2 < |q_{j_0}| \leq M$ and $0 < |\ell_{j_0}| \leq M/2$, hence $0 < |q_{j_0}^{-1}\ell_{j_0}| < 1$. On the other hand, if $q \in D(\ell)$, then $|q_{j}^{-1}\ell_{j}|$ is an integer for all $j \in \cbr{1,\ldots,n}$. Therefore if $q \in Q(M)$ and $0 < |\ell| \leq M/2$, we must have $q \notin D(\ell)$.
So, by \eq{FMhat},
\begin{align} \label{113}
\widehat{F_M}(\ell) = 0 \quad \text{for } 0 < |\ell| \leq M/2.
\end{align}

\section{Proof of Theorems \ref{main-thm-1} and \ref{main-thm-2}: The Fourier Decay of $F_M$}\label{fourier decay FM}

In this section we will prove the following Fourier decay estimate for $F_M$. 

\begin{lemma}\label{lemma-114}
For every $\zeta > \ln 2$ there is an $L_{\zeta} \in \NN$ such that 
\begin{align}\label{114}
|\widehat{F_{M}}(\ell)| \leq C_1 |\ell|^{-a} \exp\rbr{ \frac{\zeta \ln |\ell|}{\ln \ln |\ell|} } h(M) \quad \forall \ell \in \ZZ^{mn}, |\ell| \geq L_{\zeta}, M \in \mathcal{M}.
\end{align}
\end{lemma}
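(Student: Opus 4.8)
The plan is to start from the exact-form expression for $\widehat{F_M}$ in \eqref{FMhat} and bound it by the product of three pieces: the cardinality ratio $|Q(M)\cap D(\ell)|/|Q(M)|$, a factor coming from the decay of $\widehat\phi$, and the constant $C_1$ from \eqref{phi-decay}. Concretely, from \eqref{FMhat} and \eqref{phi-decay},
\begin{align*}
|\widehat{F_M}(\ell)| \leq \frac{1}{|Q(M)|}\sum_{q \in Q(M)\cap D(\ell)} C_1\bigl(1+|\epsilon(M) q_1^{-1}\ell_1|\bigr)^{-K} \leq C_1\,\frac{|Q(M)\cap D(\ell)|}{|Q(M)|}\,\bigl(1+\epsilon(M)M^{-1}|\ell|\bigr)^{-K},
\end{align*}
where the last inequality uses that for $q\in Q(M)\cap D(\ell)$ we have $|q_j|\le M$ for every $j$, so $|q_1^{-1}\ell_1| = |q_{j_0}^{-1}\ell_{j_0}| \ge |\ell_{j_0}|/M$ for the index $j_0$ realizing $|\ell| = |\ell_{j_0}|$ (using that $q_j^{-1}\ell_j$ is the same integer vector for all $j$ when $q\in D(\ell)$, so its max-norm is at least $|\ell_{j_0}|/|q_{j_0}|$). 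I should be a little careful with exactly which coordinate to pick, but this is routine.

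Next I would estimate the two nontrivial factors. For the $\widehat\phi$-decay factor, since $K > mn + a \ge a$, I use $(1+t)^{-K} \le (1+t)^{-a} \le t^{-a}$ for $t>0$, so $(1+\epsilon(M)M^{-1}|\ell|)^{-K} \le (\epsilon(M)M^{-1}|\ell|)^{-a} = \epsilon(M)^{-a}M^a|\ell|^{-a}$ (when $a>0$; when $a=0$ this factor is just $\le 1$ and the argument is even simpler). For the counting factor, the denominator is handled by hypothesis \eqref{main-thm-1 e1}/\eqref{main-thm-2 e1}: for $M\in\mathcal M$ we have $|Q(M)| \ge M^a\epsilon(M)^{-a}h(M)^{-1}$, so $\epsilon(M)^{-a}M^a/|Q(M)| \le h(M)$. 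The numerator $|Q(M)\cap D(\ell)|$ is bounded using Wigert's divisor bound: every $q\in D(\ell)$ is determined by a divisor-type relation with $\ell$, and in fact each coordinate $q_j$ must divide $\ell_{ij}$ for all $i$, hence $q_j \mid \gcd_i \ell_{ij}$, so $q$ ranges over a set of size at most $\prod_{j}\tau(|\ell_j|_{\gcd})$ — but more simply one shows $|Q(M)\cap D(\ell)| \le |D(\ell)\setminus\{0\}|$ is controlled by the number of divisors of some nonzero integer coordinate of $\ell$, and Wigert gives $\tau(k) \le \exp(\zeta' \ln k/\ln\ln k)$ for $k$ large, any $\zeta' > \ln 2$. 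Combining, for every $\zeta > \ln 2$ there is $L_\zeta$ so that $|Q(M)\cap D(\ell)| \le \exp(\zeta\ln|\ell|/\ln\ln|\ell|)$ for $|\ell|\ge L_\zeta$. Multiplying the three bounds gives exactly \eqref{114}.

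I expect the main obstacle — or at least the only step needing genuine care rather than bookkeeping — to be pinning down the combinatorial structure of $D(\ell)$ and extracting the right Wigert bound in the $mn>1$ case. In one dimension $D(\ell)$ is literally the set of divisors of $\ell$ and the bound $|Q(M)\cap D(\ell)| \le |D(\ell)| \le \tau(|\ell|) \le \exp(\zeta\ln|\ell|/\ln\ln|\ell|)$ is immediate. In higher dimensions I need to observe: if $q\in D(\ell)$ with all $q_j\ne 0$, then the common value $q_j^{-1}\ell_j \in \ZZ^m$ forces $q_j \mid \ell_{ij}$ for every $i$, so in particular, fixing any $j$ with $\ell_j \neq 0$ and any $i$ with $\ell_{ij}\neq 0$, $q_j$ is a divisor of the nonzero integer $\ell_{ij}$, and then the remaining coordinates $q_{j'}$ are determined (up to the $\pm$ ambiguity already inherent) by $q_{j'}\ell_{ij} = q_j\ell_{ij'}$; if $\ell\neq 0$ there is always such a pair $(i,j)$ with $|\ell_{ij}| = |\ell|$. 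Hence $|Q(M)\cap D(\ell)|$ is bounded by a constant (depending on $m,n$) times $\tau(|\ell|)$, and absorbing that constant into the choice of $\zeta$ versus $\zeta'$ (or into $L_\zeta$) yields the claimed bound. The edge cases $|\ell| < L_\zeta$ are excluded by hypothesis, and $a=0$ trivializes the decay factor, so no separate treatment is needed beyond a sentence. I would also remark that the hypothesis on $M\in\mathcal M$ is used only through the denominator estimate, exactly as in the outline of Section \ref{remarks on proof}.
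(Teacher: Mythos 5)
Your proposal is correct and follows essentially the same route as the paper: bound $|\widehat{F_M}(\ell)|$ by $C_1\frac{|Q(M)\cap D(\ell)|}{|Q(M)|}(1+\epsilon(M)M^{-1}|\ell|)^{-K}$ using \eqref{FMhat} and \eqref{phi-decay}, estimate the decay factor with $K\geq a$, handle $|Q(M)|$ via \eqref{main-thm-2 e1}, and control $|Q(M)\cap D(\ell)|$ by observing that $q\in D(\ell)$ is determined up to sign by a divisor $q_{j_0}$ of a coordinate of maximal modulus $|\ell_{i_0j_0}|=|\ell|$, then invoking Wigert's bound. The only minor imprecision is the remark about a ``$\pm$ ambiguity'' in the remaining coordinates $q_{j'}$: once $q_{j_0}$ is fixed, each $q_{j'}=\ell_{i_0j'}q_{j_0}/\ell_{i_0j_0}$ is uniquely determined, and the factor $2$ in the paper's bound $|D(\ell)|\leq 2\tau(|\ell|)$ comes solely from $q_{j_0}$ ranging over positive and negative divisors; this does not affect the conclusion.
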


The proof of Lemma \ref{lemma-114} relies on the following divisor bound of Wigert \cite{Wigert} (cf. \cite[p.~262]{HW}).
\begin{lemma}[Wigert]\label{Wigert}
Let $\tau(\ell)$ be the number of positive integer divisors of the integer $\ell$. Then
$$
\limsup_{\ell \rightarrow \infty} \frac{\ln \tau(\ell)}{\ln \ell / \ln \ln \ell} = \ln 2.
$$
\end{lemma}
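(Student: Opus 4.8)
The final statement is Wigert's classical theorem on the maximal order of the divisor function, so the plan is to give a self-contained proof consisting of a matching upper and lower bound for the $\limsup$. Two facts drive everything: $\tau$ is multiplicative with $\tau(p^a)=a+1$, so writing $n=\prod_p p^{a_p}$ we have $\ln\tau(n)=\sum_p\ln(a_p+1)$ and $\ln n=\sum_p a_p\ln p$; and $a_p\le\log_2 n$ for every prime $p\mid n$. I would establish $\limsup_{\ell\to\infty}\frac{\ln\tau(\ell)\ln\ln\ell}{\ln\ell}\le\ln 2$ and the reverse inequality $\ge\ln 2$ separately.

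For the upper bound I would fix a parameter $\delta\in(0,1)$ and split the product $\tau(n)=\prod_p(a_p+1)$ according to whether $p\ge 2^{1/\delta}$ or $p<2^{1/\delta}$. For $p\ge 2^{1/\delta}$ one has $a_p+1\le 2^{a_p}\le p^{\delta a_p}$, so that subproduct is at most $\prod_p p^{\delta a_p}=n^{\delta}$; for $p<2^{1/\delta}$ there are fewer than $2^{1/\delta}$ such primes and each contributes a factor $a_p+1\le 1+\log_2 n$, so that subproduct is at most $(1+\log_2 n)^{2^{1/\delta}}$. Taking logarithms gives
\[
\ln\tau(n)\le\delta\ln n+2^{1/\delta}\ln(1+\log_2 n).
\]
Then I would let $\delta=\delta(n)$ be determined by $2^{1/\delta}=\ln n/(\ln\ln n)^3$, which forces $\delta=(\ln 2+o(1))/\ln\ln n$; the first term is then $(\ln 2+o(1))\ln n/\ln\ln n$ while the second is $\tfrac{\ln n}{(\ln\ln n)^3}\big(\ln\ln n+O(1)\big)=o(\ln n/\ln\ln n)$, yielding $\ln\tau(n)\le(\ln 2+o(1))\ln n/\ln\ln n$ and hence $\limsup\le\ln 2$.

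For the lower bound I would evaluate the quotient along the primorials $n_k=p_1p_2\cdots p_k$. Here $\tau(n_k)=2^k$, so $\ln\tau(n_k)=k\ln 2$, while $\ln n_k=\sum_{i\le k}\ln p_i=\vartheta(p_k)$. By the prime number theorem $\vartheta(p_k)\sim p_k\sim k\ln k$, so $\ln n_k\sim k\ln k$ and therefore $\ln\ln n_k\sim\ln k$; consequently, for large $k$,
\[
\frac{\ln\tau(n_k)\,\ln\ln n_k}{\ln n_k}=\frac{k\ln 2\cdot\ln\ln n_k}{\ln n_k}\longrightarrow\ln 2,
\]
which gives $\limsup\ge\ln 2$. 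Combining the two bounds proves the lemma.

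The one genuinely delicate point, in both directions, is extracting the exact constant $\ln 2$ rather than just the correct order $\ln\ell/\ln\ln\ell$. On the upper side this is the tuning of the cutoff $2^{1/\delta}$: taking it as large as $\ln\ell$ already produces a small-prime contribution of order $\ln\ell/\ln\ln\ell$ with an unwanted constant, while taking it much smaller makes $\delta\ln\ell$ overshoot, so one is forced to $\delta\asymp 1/\ln\ln\ell$ and must carry the $(1+o(1))$ factors carefully. On the lower side, obtaining the precise value $\ln 2$ — as opposed to a quotient merely trapped between two positive multiples of $\ln 2$ — is exactly where the prime number theorem, in the form $\vartheta(x)\sim x$ (equivalently $p_k\sim k\ln k$), is essential; Chebyshev-type estimates alone would not suffice.
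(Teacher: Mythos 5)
The paper does not prove this lemma at all; it is quoted as a classical result with a citation to Wigert and to Hardy--Wright, so there is no ``paper's proof'' to compare against. Your self-contained argument is correct and is essentially the standard one (cf.\ Hardy--Wright, Theorem 317): the upper bound splits the multiplicative product $\tau(n)=\prod_p(a_p+1)$ at a threshold $2^{1/\delta}$ with $\delta\asymp 1/\ln\ln n$, absorbing the large-prime factors into $n^{\delta}$ and bounding the bounded number of small-prime factors polylogarithmically, while the lower bound is attained along primorials $n_k=p_1\cdots p_k$ using $\ln n_k=\vartheta(p_k)\sim p_k\sim k\ln k$. Your remarks on the delicate points are also accurate: on the upper side the choice $2^{1/\delta}\approx\ln n/(\ln\ln n)^3$ is what makes the secondary term $o(\ln n/\ln\ln n)$ while keeping $\delta\ln n=(\ln 2+o(1))\ln n/\ln\ln n$, and on the lower side the full strength of the prime number theorem in the form $\vartheta(x)\sim x$ (equivalently $p_k\sim k\ln k$) is genuinely needed to obtain the constant $\ln 2$ exactly rather than merely up to Chebyshev constants.
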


Besides Wigert's divisor bound, the proof of Lemma \ref{lemma-114} uses the Fourier decay of $\phi$ and the density of $|Q(M)|$.

%Here is the proof Lemma \ref{lemma-114}.

\begin{proof}[Proof of Lemma \ref{lemma-114}]
Choose $i_0 \in \cbr{1,\ldots,m}$ and $j_0 \in \cbr{1,\ldots,n}$ such that $|\ell| = |\ell_{j_0}|=|\ell_{i_0 j_0}|$. By \eq{phi-decay}, \eq{FMhat}, and the definition of $D(\ell)$, for all $\ell \in \ZZ^{mn}$ we have
\begin{align*}
|\widehat{F_M}(\ell)|
&\leq
C_1 \frac{1}{|Q(M)|} \sum_{q \in Q(M) \cap D(\ell)} 
|\widehat{\phi}( \epsilon(M)  q_{1}^{-1} \ell_1 )| \\
&\leq
C_1 \frac{1}{|Q(M)|} \sum_{q \in Q(M) \cap D(\ell)}  (1 + \epsilon(M)  |q_1|^{-1} |\ell_1| )^{-K}
\\
&=
C_1 \frac{1}{|Q(M)|} \sum_{q \in Q(M) \cap D(\ell)}  (1 + \epsilon(M)  |q_{j_0}|^{-1} |\ell_{j_0}| )^{-K}
\\
&\leq
C_1 \frac{|Q(M) \cap D(\ell)|}{|Q(M)|} (1 + \epsilon(M) M^{-1} |\ell| )^{-K}.
\end{align*} 
We estimate each factor in the last expression separately. Assume $\ell \neq 0$.
Since $K \geq a$, we have
\begin{align*}
(1 + \epsilon(M) M^{-1} |\ell|)^{-K} \leq (\epsilon(M) M^{-1} |\ell|)^{-a}.
\end{align*}
By \eq{main-thm-2 e1}, 
\begin{align*}
|Q(M)|\epsilon(M)^{a} h(M) \geq M^{a} \quad \forall M \in \mathcal{M}.
\end{align*}
To bound $|Q(M) \cap D(\ell)|$, we start with $|Q(M) \cap D(\ell)| \leq |D(\ell)|$. Note
$$
D(\ell)
\subseteq 
\cbr{(q_1,\ldots,q_n) \in \ZZ^n : \frac{\ell_{i_0 j_0}}{q_{j_0}} \in \ZZ, \, q_j = \frac{\ell_{i_0 j} q_{j_0} }{\ell_{i_0 j_0}} \quad \forall 1 \leq j \leq n }.
$$
%%%%% The mapping $(q_1,\ldots,q_n) \mapsto q_{j_0}$ is a bijection between the set on the right and the set of integers that divide $|\ell| = |\ell_{i_0 j_0}|$.
The set on the right is in bijection with the set of integers that divide $|\ell| = |\ell_{i_0 j_0}|$, so this set has cardinality $2\tau(|\ell|)$ with $\tau$ as in Lemma \ref{Wigert}. Thus 
$$
|Q(M) \cap D(\ell)| \leq |D(\ell)| \leq 2\tau(|\ell|).
$$
It follows from Lemma \ref{Wigert} that for every $\zeta > \ln 2$ there is an $L_{\zeta} \in \NN$ such that 
$$
|Q(M) \cap D(\ell)| \leq \exp\rbr{ \frac{\zeta \ln |\ell|}{\ln \ln |\ell|}  }	 \quad \forall \ell \in \ZZ^{mn}, |\ell| \geq L_{\zeta}, M \in \mathcal{M}.
$$
Putting everything together, we get \eq{114}.
\end{proof}

%%%%%%%%%%%%%%%%%%%%%%%%%%%%%%%%%%%%%%%%%%%%%%%%%%%%%%%%%%%%%%%
%%%%%%%%%%%%%%%%%%%%%%%%%%%%%%%%%%%%%%%%%%%%%%%%%%%%%%%%%%%%%%%
%%%%%%%%%%%%%%%%%%%%%%%%%%%%%%%%%%%%%%%%%%%%%%%%%%%%%%%%%%%%%%%
%%%%%%%%%%%%%%%%%%%%%%%%%%%%%%%%%%%%%%%%%%%%%%%%%%%%%%%%%%%%%%%
\section{Proof of Theorems \ref{main-thm-1} and \ref{main-thm-2}: The Key Lemma}\label{main lemma}

In this section, we state and prove the key lemma that will let us pass from the function $F_M$ to the measure $\mu$.
 
%Define
%$$
%g(\xi) = \left\{
%\begin{array}{cl}
%1 & \text{if } |\xi| \leq e, \\
%|\xi|^{-a} \exp\rbr{ \dfrac{\ln |\xi|}{\ln \ln |\xi|}  }  h(4|\xi|) & \text{if } |\xi| > e.
%\end{array} \right.
%$$

Define
$$
g(\xi) = \begin{cases}
1 & \text{if } |\xi| \leq e, \\
|\xi|^{-a} \exp\rbr{ \dfrac{\ln |\xi|}{\ln \ln |\xi|}  }  h(4|\xi|) & \text{if } |\xi| > e.
\end{cases}
$$

\begin{lemma}\label{main-lemma}
For every $\delta > 0$, $M_0 > 0$, and $\chi \in C^{K}_{c}(\RR^{mn})$, there is an $M_{\ast} = M_{\ast}(\delta,M_0,\chi) \in \mathcal{M}$ such that $M_{\ast} \geq M_0$ and
\begin{align*}
|\widehat{\chi F_{M_{\ast}}}(\xi) - \widehat{\chi}(\xi)| \leq \delta g(\xi) \quad \forall \xi \in \RR^{mn}
\end{align*}
\end{lemma}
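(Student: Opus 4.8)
The plan is to compare $\widehat{\chi F_M}$ with $\widehat \chi$ by expanding $F_M$ in its Fourier series and isolating the constant term. Since $F_M$ is $C^K$ and $\ZZ^{mn}$-periodic with $\widehat{F_M}(0)=1$ by \eqref{111.9}, we may write $F_M = 1 + G_M$ where $G_M(x) = \sum_{\ell \neq 0} \widehat{F_M}(\ell) e^{2\pi i \ell \cdot x}$, and this series converges absolutely and uniformly because $|\widehat{F_M}(\ell)| \leq C_1(1+\epsilon(M)M^{-1}|\ell|)^{-K}|Q(M)\cap D(\ell)|/|Q(M)|$ with $K > mn+a$ large. Consequently $\chi F_M = \chi + \chi G_M$, so that $\widehat{\chi F_M}(\xi) - \widehat\chi(\xi) = \widehat{\chi G_M}(\xi) = \sum_{\ell \neq 0} \widehat{F_M}(\ell)\, \widehat{\chi}(\xi - \ell)$, where I have used that the Fourier transform of a product $\chi$ times a pure frequency $e^{2\pi i \ell \cdot x}$ is a translate of $\widehat\chi$. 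The whole problem thus reduces to estimating this sum.

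The key point is to exploit \eqref{113}: $\widehat{F_M}(\ell) = 0$ for $0 < |\ell| \leq M/2$. So the sum runs only over $|\ell| > M/2$, and by choosing $M = M_*$ large we can force the sum to be small. I would split the sum into two ranges. For $|\ell| > M/2$ but $|\ell|$ small relative to $|\xi|$ — say $|\ell| \leq |\xi|/2$ or so, a regime that is only nonempty when $|\xi| \gtrsim M$ — one uses $|\widehat{F_M}(\ell)| \leq 1$ from \eqref{112} together with the rapid decay of $\widehat\chi(\xi-\ell)$ (since $\chi \in C_c^K$, $|\widehat\chi(\eta)| \lesssim_N (1+|\eta|)^{-N}$ for any $N$), noting $|\xi - \ell| \gtrsim |\xi|$; summing the tail $\sum_{|\ell| > M/2}(1+|\xi-\ell|)^{-N}$ gives something like $M^{-(N-mn)}$ which, against the target $g(\xi)$, is tiny once $M$ is large and $N$ is chosen appropriately. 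For $|\ell| > |\xi|/2$ (the main range), I bound $|\widehat\chi(\xi-\ell)| \leq \|\widehat\chi\|_\infty \lesssim 1$ and use the genuine decay estimate \eqref{114}: $|\widehat{F_M}(\ell)| \leq C_1 |\ell|^{-a}\exp(\zeta \ln|\ell|/\ln\ln|\ell|) h(M)$ for $|\ell|$ large. Actually, to make the sum converge I also need the extra decay from $\widehat\chi$ — so I would instead write $|\widehat{F_M}(\ell)| \leq C_1(1+\epsilon(M)M^{-1}|\ell|)^{-K}|D(\ell)|/|Q(M)|$ directly and combine the $|\xi-\ell|^{-N}$ decay of $\widehat\chi$ with the divisor bound $|D(\ell)| \leq 2\tau(|\ell|) \lesssim_\zeta \exp(\zeta\ln|\ell|/\ln\ln|\ell|)$, so that the sum over $|\ell| > |\xi|/2$ telescopes into roughly $|\xi|^{-a}\exp(\zeta\ln|\xi|/\ln\ln|\xi|)h(M)/(\epsilon(M)^a M^{-a}|Q(M)|) \cdot (\text{something summable})$, and by \eqref{main-thm-2 e1}, $M^a/(\epsilon(M)^a|Q(M)|) \leq h(M)$, giving a bound of the shape $|\xi|^{-a}\exp(\zeta \ln|\xi|/\ln\ln|\xi|) h(M)^2 \cdot C$. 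Comparing with $g(\xi) = |\xi|^{-a}\exp(\ln|\xi|/\ln\ln|\xi|)h(4|\xi|)$: since $\zeta$ can be taken just above $\ln 2 < 1$, we have $\exp(\zeta\ln|\xi|/\ln\ln|\xi|) \leq \exp(\ln|\xi|/\ln\ln|\xi|)$ for $|\xi|$ large; and here is where the argument $4|\xi|$ in $h$ matters — because $|\ell| > |\xi|/2$ and $h$ is increasing, controlling $h(M)$ against $h(4|\xi|)$ requires that in this range $M \lesssim |\xi|$, which is automatic since the first range already handled $|\xi| \gtrsim M$; more carefully one arranges the split so that in the "main" range $M/2 < |\ell|$, and $h(M) \leq h(2|\ell|) \leq h(4|\xi|)$ when $|\ell| \leq 2|\xi|$, treating $|\ell| > 2|\xi|$ with pure $\widehat\chi$-decay.

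The main obstacle will be the bookkeeping of these two (or three) ranges so that in every range the bound is $\leq \delta g(\xi)$ uniformly in $\xi \in \RR^{mn}$, with the $\delta$ coming either from taking $M_*$ large (tail ranges, where every extra power of $M^{-1}$ is a gift) or from a summable-series constant times $h(M)/(\epsilon(M)M^{-1})^{\text{gap}}$ — wait, the $\delta$ in the main range is subtler since the bound there is comparable to $g(\xi)$, not smaller. The resolution: in the main range I should not use the full $K$; instead use $(1+\epsilon(M)M^{-1}|\ell|)^{-K}$ with $K = a + (\text{small}) + (mn+1)$, spend $a$ on producing $|\ell|^{-a}(\epsilon(M)M^{-1})^{-a}$, spend $mn+1$ on making $\sum_\ell$ converge, and spend the remaining positive slack $s$ on a factor $(\epsilon(M)M^{-1}|\ell|)^{-s} \leq (\epsilon(M)M^{-1} \cdot M/2)^{-s} = (\epsilon(M)/2)^{-s}$ — no, that grows. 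Better: the slack gives $(\epsilon(M)M^{-1}|\ell|)^{-s}$ and since $|\ell| \gtrsim M$ this is $\lesssim \epsilon(M)^{-s}$ which is $\geq 1$, unhelpful. The correct source of smallness in the main range is simply that the series $\sum_{|\ell|>M/2}$ of a fixed summable tail goes to $0$ as $M \to \infty$: writing the main-range bound as $|\xi|^{-a}\exp(\zeta\ln|\xi|/\ln\ln|\xi|)h(M) \cdot [\text{tail sum over } |\ell| > M/2 \text{ of a convergent series}]$, and noting $h(M)$ grows slower than any power while the tail decays like a power of $M$, the product $\to 0$; so pick $M_*$ large enough that it is $< \delta$ (after absorbing the $h(M) \leq h(4|\xi|)$ comparison which holds in the relevant sub-range $M \lesssim |\xi|$, and handling $|\xi| \lesssim M$ separately where $g(\xi)$ is bounded below and $\widehat{\chi G_M}$ can be made small outright). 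Assembling this carefully — in particular verifying that the constants are independent of $\xi$ and that the choice of $M_*$ depends only on $\delta, M_0, \chi$ (through $\|\chi\|_{C^K}$ and $\supp\chi$) and the fixed data $Q,\Psi,\theta,a,h$ — is the technical heart.
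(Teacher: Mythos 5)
Your reduction to $\widehat{\chi F_M}(\xi) - \widehat\chi(\xi) = \sum_{|\ell|>M/2}\widehat{F_M}(\ell)\,\widehat\chi(\xi-\ell)$ via the Fourier series of $F_M$, and the subsequent split by the size of $|\ell|$ relative to $|\xi|$, matches the paper's route. The range $M/2<|\ell|\leq|\xi|/2$ is handled correctly: there $|\xi-\ell|\geq\max\{|\xi|/2,M/4\}$, and factoring $(1+|\xi|)^{-a}$ together with a surplus power of $(1+M)^{-1}$ out of the $\widehat\chi$-decay (retaining $mn$ plus a positive slack for summability, which is why one needs $K>mn+a$) gives a bound $\leq\frac{\delta}{2}g(\xi)$ once $M$ is large.

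However, your account of where the smallness comes from in the main range $|\ell|>|\xi|/2$, $|\ell|>M/2$ has a genuine gap. You assert that the tail sum $\sum_{|\ell|>M/2}(1+|\xi-\ell|)^{-K}$ of a convergent series tends to $0$ as $M\to\infty$. That is false here: for $|\xi|\approx M$, the lattice point $\ell$ nearest $\xi$ satisfies $|\ell|>M/2$ and $|\ell|>|\xi|/2$ yet $|\xi-\ell|=O(1)$, so the tail sum is bounded below by a positive constant independent of $M$. The actual mechanism, and the reason Lemma~\ref{lemma-114} is invoked with a $\zeta$ fixed strictly in $(\ln 2,1)$, is as follows. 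After pulling out $h(M)\leq h(4|\xi|)$ (legal because $|\xi|\geq M/4$ in this range) and using that for $a>0$ the function $t\mapsto t^{-a}\exp(\zeta\ln t/\ln\ln t)$ is eventually decreasing, the bound of Lemma~\ref{lemma-114} gives the prefactor $(|\xi|/2)^{-a}\exp\bigl(\zeta\ln(|\xi|/2)/\ln\ln(|\xi|/2)\bigr)h(4|\xi|)$ times the bounded sum $\sum_\ell(1+|\xi-\ell|)^{-K}$. Comparing with $g(\xi)=|\xi|^{-a}\exp(\ln|\xi|/\ln\ln|\xi|)h(4|\xi|)$, the ratio is $2^a\exp\bigl((\zeta-1)(1+o(1))\ln|\xi|/\ln\ln|\xi|\bigr)\to 0$ as $|\xi|\to\infty$, and the constraint $|\xi|\geq M/4$ --- which you flag but never exploit --- translates ``take $M_*\in\mathcal{M}$ large'' into ``$|\xi|$ is large,'' so this vanishing ratio eventually beats the fixed summation constant and delivers $\frac{\delta}{2}g(\xi)$. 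Without the strict gap $\zeta<1$ you only get $|S_2|\lesssim g(\xi)$, not $\leq\delta g(\xi)$, and the lemma would not yield a Cauchy sequence $(\widehat{\mu_k})$.
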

The proof will show that $M_{\ast}$ can be taken to be any sufficiently large element of $\mathcal{M}$.
%
%
%
%%%%The idea of the proof is to first rewrite $\widehat{\chi F_{M_{\ast}}}(\xi) - \widehat{\chi}(\xi)$ as $\sum_{|\ell| > M/2} \widehat{\chi}(\xi-\ell) \widehat{F_M}(\ell)$. This sum is then estimated by splitting it into a sum over a region where the decay of $\widehat{\chi}$ dominates and a region where the decay of $\widehat{F_M}$ dominates.
%
%
%
\begin{proof}
Since $\chi \in C^{K}_{c}(\RR^{mn})$, there is a $C_2 > 0$ such that
\begin{align}\label{108}
|\widehat{\chi}(\xi)| \leq C_2(1+|\xi|)^{-K} \quad \forall \xi \in \RR^{mn}.
\end{align}
For every $p > mn$, we have
\begin{align}\label{108-2}
\sup_{\xi \in \RR^{mn}} \sum_{\ell \in \ZZ^{mn}} (1+|\xi - \ell|)^{-p} < \infty.
\end{align}

%Since $F_M$ is $C^{K}$ and 1-periodic, we have
Since $F_M$ is $C^{K}$ and periodic for the lattice $\ZZ^{mn}$, we have
$$
F_M(x) = \sum_{\ell \in \ZZ^{mn}} \widehat{F_M}(\ell) e^{2 \pi i \ell \cdot x} \quad \forall x \in \RR^{mn}
%:= \lim_{N \rightarrow \infty} \sum_{\ell \in \ZZ^n \cap [-N,N]^n} \widehat{F_M}(k) e^{2 \pi i \ell \cdot x}.
$$
with uniform convergence. 
Since $\chi \in L^1(\RR^{mn})$, multiplying by $\chi$ and taking the Fourier transform yields
%\begin{align*}%\label{110}
%\widehat{\chi F_M}(\xi) &= \sum_{\ell \in \ZZ} \widehat{F_M}(\ell) \int_{\RR} \chi(x) e^{2 \pi i (\ell - \xi) \cdot x} dx \\
%&=
%%%\notag
%\sum_{\ell \in \ZZ} \widehat{F_M}(\ell) \widehat{\chi}(\xi-\ell) \quad \forall \xi \in \RR.
%\end{align*} 
\begin{align*}
\widehat{\chi F_M}(\xi) 
= \sum_{\ell \in \ZZ^{mn}} \widehat{F_M}(\ell) \int_{\RR^{mn}} \chi(x) e^{2 \pi i (\ell - \xi) \cdot x} dx
= \sum_{\ell \in \ZZ^{mn}} \widehat{F_M}(\ell) \widehat{\chi}(\xi-\ell)
\end{align*}
for all $\xi \in \RR^{mn}$.
Then by \eq{111.9} and \eq{113} we have
%By \eq{112}, \eq{113}, and \eq{110}, we have
\begin{align}\label{110-2}
\widehat{\chi F_{M}}(\xi) - \widehat{\chi}(\xi)
=
\sum_{\ell \in \ZZ^{mn}} \widehat{\chi}(\xi-\ell) \widehat{F_M}(\ell) - \widehat{\chi}(\xi) %\\
=
\sum_{|\ell| > M/2} \widehat{\chi}(\xi-\ell) \widehat{F_M}(\ell).
\end{align}%
\textbf{Case 1:} $|\xi| < M/4$. If $|\ell| > M/2$, then 
%%%%$|\xi - \ell| \geq M/4$.
$|\xi - \ell| > M/4 > |\xi|$. 
Hence by \eq{112}, \eq{108}, \eq{108-2}, \eq{110-2} and because $K > mn + a$ we have
\begin{align*}
| \widehat{\chi F_{M}}(\xi) - \widehat{\chi}(\xi) |
&\leq
C_2 \sum_{|\ell| > M/2} (1+|\xi - \ell|)^{-K} \\
&\leq
C_2 (1 + |\xi|)^{-a} (1 + M/4)^{-(K - a - mn)/2} \sum_{|\ell| > M/2} (1+|\xi - \ell|)^{-mn-(K - a - mn)/2} \\
&\leq
\delta g(\xi)
\end{align*}
for all $M$ sufficiently large.

%\begin{comment}
%\textbf{Case 2:} $|\xi| \geq M/4$. Using \eq{110-2}, write 
%$$
%\widehat{\chi F_{M}}(\xi) - \widehat{\chi}(\xi) = S_1 + S_2,
%$$
%where
%\begin{align*}
%S_1 &= \sum_{\substack{|\ell| \geq M/2 \\ |\ell| \leq |\xi|/2}} \widehat{\chi}(\xi-\ell)  \widehat{F_M}(\ell), \\
%S_2 &= \sum_{\substack{|\ell| \geq M/2 \\ |\ell| > |\xi|/2}} \widehat{\chi}(\xi-\ell)  \widehat{F_M}(\ell).
%\end{align*}
%\end{comment}

\textbf{Case 2:} $|\xi| \geq M/4$. Using \eq{110-2}, write 
$$
\widehat{\chi F_{M}}(\xi) - \widehat{\chi}(\xi) 
= \sum_{\substack{|\ell| > M/2 \\ |\ell| \leq |\xi|/2}} \widehat{\chi}(\xi-\ell)  \widehat{F_M}(\ell) 
+ \sum_{\substack{|\ell| > M/2 \\ |\ell| > |\xi|/2}} \widehat{\chi}(\xi-\ell)  \widehat{F_M}(\ell) 
= S_1 + S_2.
$$

If $|\ell| \leq |\xi|/2$, then $|\xi - \ell| \geq |\xi|/2 \geq M/8$. Hence by \eq{112}, \eq{108}, \eq{108-2} and because $K > mn + a$ we have
\begin{align*}
|S_1|
&\leq
C_2 \sum_{\substack{|\ell| > M/2 \\ |\ell| \leq |\xi|/2}} (1+|\xi - \ell|)^{-K} \\
&\leq
C_2 (1 + |\xi|/2)^{-a} (1 + M/8)^{-(K - a - mn)/2} \sum_{\substack{|\ell| > M/2 \\ |\ell| \leq |\xi|/2}} (1+|\xi - \ell|)^{-mn-(K - a - mn)/2} \\
&\leq
%\frac{\delta}{2} (1 + |\xi|)^{-a}
\frac{\delta}{2} g(\xi)
\end{align*}
for all $M$ sufficiently large.

%Fix $\ln 2 < \zeta < 1$. By \eq{main-thm-2 e1}, \eq{114}, \eq{108}, \eq{108-2} and because $K > mn$ we have
%\begin{align*}
%|S_2|
%&\leq
%2C_1 C_2 \sum_{\substack{|\ell| \geq M/2 \\ |\ell| > |\xi|/2 }} \frac{f_{\zeta}(|\ell|)}{|Q(M)|} (1 + \epsilon(M)  M^{-1} |\ell| )^{-K} (1+|\xi - \ell|)^{-K} \\
%&\leq
%2C_1 C_2 \sum_{\substack{|\ell| \geq M/2 \\ |\ell| > |\xi|/2}} |\ell|^{-a} f_{\zeta}(|\ell|) \frac{(\epsilon(M)M^{-1})^{-a}}{|Q(M)|}  (1+|\xi - \ell|)^{-K} \\
%&\leq
%2C_1 C_2 (|\xi|/2)^{-a} f_{\zeta}(|\xi|/2) h(M) \sum_{\substack{|\ell| \geq M/2 \\ |\ell| > |\xi|/2}}  (1+|\xi - \ell|)^{-K} \\
%%%%&\leq \frac{\delta}{2} (1+|\xi|)^{-a} f_{1}(|\xi|) h(4|\xi|)
%&\leq \frac{\delta}{2} g(\xi)
%\end{align*}
%for all sufficiently large $M \in \mathcal{M}$.

Fix $\ln 2 < \zeta < 1$. By \eq{114}, \eq{108}, \eq{108-2} and because $K > mn$ we have
\begin{align*}
|S_2|
&\leq
C_1 C_2 \sum_{\substack{|\ell| > M/2 \\ |\ell| > |\xi|/2}} |\ell|^{-a} \exp\rbr{ \frac{\zeta \ln |\ell|}{\ln \ln |\ell|} } h(M)  (1+|\xi - \ell|)^{-K} \\
&\leq
C_1 C_2 (|\xi|/2)^{-a} \exp\rbr{ \frac{\zeta \ln (|\xi|/2)}{\ln \ln (|\xi|/2)} } h(4|\xi|) \sum_{\substack{|\ell| > M/2 \\ |\ell| > |\xi|/2}}  (1+|\xi - \ell|)^{-K} \\
&\leq \frac{\delta}{2} g(\xi)
\end{align*}
for all sufficiently large $M \in \mathcal{M}$.
\end{proof}

%%%%%%%%%%%%%%%%%%%%%%%%%%%%%%%%%%%%%%%%%%%%%%%%%%%%%%%%%%%%%%%
%%%%%%%%%%%%%%%%%%%%%%%%%%%%%%%%%%%%%%%%%%%%%%%%%%%%%%%%%%%%%%%
%%%%%%%%%%%%%%%%%%%%%%%%%%%%%%%%%%%%%%%%%%%%%%%%%%%%%%%%%%%%%%%
%%%%%%%%%%%%%%%%%%%%%%%%%%%%%%%%%%%%%%%%%%%%%%%%%%%%%%%%%%%%%%%
\section{Proof of Theorems \ref{main-thm-1} and \ref{main-thm-2}: The Measure $\mu$}\label{completing the proof}

Let $\chi_0 \in C_{c}^{K}(\RR^{mn})$ with $\int_{\RR^{mn}} \chi_0(x)dx = 1$, $\text{supp}(\chi_0) = [-1,1]^{mn}$, and $\chi_0(x) > 0$ for all $|x|<1$. With the notation of Lemma \ref{main-lemma}, define
$$
M_1 = M_{\ast}(2^{-1},1,\chi_0), \quad M_{k} = M_{\ast}(2^{-k-1},2M_{k-1},\chi_0F_{M_1}\cdots F_{M_{k-1}}) \quad \forall k \geq 2.
$$
Define measures $\mu_k$ by
$$
d\mu_0 = \chi_0 dx, \quad d\mu_k = \chi_0  F_{M_1} \cdots F_{M_{k}} dx \quad \forall k \geq 1.
$$
By Lemma \ref{main-lemma},
\begin{align}\label{7}
|\widehat{\mu_k}(\xi) - \widehat{\mu_{k-1}}(\xi)| \leq 2^{-k-1} g(\xi) \quad \forall \xi \in \RR^{mn}, k \in \NN. 
\end{align}
Since $g(\xi)$ is bounded, \eq{7} implies $(\widehat{\mu_k})_{k = 0}^{\infty}$ is a Cauchy sequence in the supremum norm.
%\eq{7} implies $(\widehat{\mu_k}(\xi))_{k \in \NN}$ is a Cauchy sequence for each fixed $\xi$.
Therefore, since each $\widehat{\mu_k}$ is a continuous function, $\displaystyle{\lim_{k \rightarrow \infty}} \widehat{\mu_k}$ is a continuous function.
By \eq{7}, we have
\begin{align}\label{8}
|\lim_{k \rightarrow \infty} \widehat{\mu_k}(\xi) - \widehat{\mu_0}(\xi)| 
%&\leq
\leq 
\sum_{k=1}^{\infty} |\widehat{\mu_k}(\xi) - \widehat{\mu_{k-1}}(\xi)| %\\
%\notag
%&\leq 
\leq
g(\xi) \sum_{k=1}^{\infty} 2^{-k-1} = \frac{1}{2}g(\xi)
\end{align}
for all $\xi \in \RR^{mn}$. Since $\widehat{\mu_0}(0) = \int_{\RR^{mn}} \chi_0(x)dx = 1$ and $g(0) = 1$, it follows from \eq{8} that 
$$
1/2 \leq |\displaystyle{\lim_{k \rightarrow \infty}} \widehat{\mu_k}(0)| \leq 3/2.
$$ 
Therefore, by L\'{e}vy's continuity theorem, $(\mu_k)_{k=0}^{\infty}$ converges weakly to a non-trivial finite Borel measure $\mu$ with $\widehat{\mu} = \displaystyle{\lim_{k \rightarrow \infty}} \widehat{\mu_k}$ and
\begin{align*}
\text{supp}(\mu) 
= \bigcap_{k=1}^{\infty} \text{supp}(\mu_k) 
= \text{supp}(\chi_0) \cap \bigcap_{k=1}^{\infty} \text{supp}(F_{M_k}).
\end{align*}
Because $M_{k} \geq 2M_{k-1}$ and because of \eq{support}, we have 
$$\text{supp}(\mu) \subseteq E(m,n,Q,\Psi,\theta).$$
Since $\chi_0 \in C_{c}^{K}(\RR^{mn})$ and $K > a$, we have $\widehat{\mu_0}(\xi) \lesssim (1+|\xi|)^{-a}$ for all $\xi \in \RR^{mn}$. Combining this with \eq{8} gives 
$$
|\widehat{\mu}(\xi)| \lesssim g(\xi) \quad \forall \xi \in \RR^{mn}.
$$
By multiplying $\mu$ by a constant, we can make $\mu$ a probability measure. This completes the proof of Theorems \ref{main-thm-1} and \ref{main-thm-2}.

%%%%%%%%%%%%%%%%%%%%%%%%%%%%%%%%%%%%%%%%%%%%%%%%%%%%%%%%%%%%%%%%%%%%%%%5

\section{Proof of Lemma \ref{eta-upper lemma}}\label{appendix}

\begin{lemma}[Restatement of Lemma \ref{eta-upper lemma}]
$$
\dim_{H} E(Q,\Psi,\theta) \leq \min\cbr{\eta(Q,\Psi),1},
$$
where
$$
\eta(Q,\Psi) = \inf \bcbr{\eta \geq 0 : \sum_{\substack{q \in Q \\ q \neq 0}} |q|\rbr{\frac{\Psi(q)}{|q|}}^{\eta} < \infty}.
$$
\end{lemma}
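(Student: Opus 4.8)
The plan is to use the natural ``cover from above'' for the $\limsup$ set $E(Q,\Psi,\theta)$, which is the standard Borel--Cantelli half of a Khintchine-type dimension formula. Since $\dim_H(A)\le 1$ for every $A\subseteq\RR$ by definition of Hausdorff dimension, it is enough to show $\dim_H E(Q,\Psi,\theta)\le\eta(Q,\Psi)$, and we may assume $\eta(Q,\Psi)<1$, since otherwise $\min\{\eta(Q,\Psi),1\}=1$ and there is nothing to prove. So fix any $\eta$ with $\eta(Q,\Psi)<\eta<1$; because $\Psi$ is bounded, only finitely many $q$ have $\Psi(q)/|q|\ge 1$, so the convergence set $\{\eta:\sum_{q\ne 0}|q|(\Psi(q)/|q|)^{\eta}<\infty\}$ is an up-ray, and hence $\sum_{q\in Q,\,q\ne 0}|q|\left(\Psi(q)/|q|\right)^{\eta}<\infty$.

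Next I would record the interval structure. For $q\ne 0$, the set $\{x:\|qx-\theta\|\le\Psi(q)\}$ is the union over $k\in\ZZ$ of the intervals $I_{q,k}=\big[(k+\theta-\Psi(q))/q,\ (k+\theta+\Psi(q))/q\big]$, each of length $\diam I_{q,k}=2\Psi(q)/|q|$. Pick a constant $C$ with $\Psi\le C$; then $\diam I_{q,k}\le 2C/|q|$. Since the set $\{q\in Q:|q|<Q_0\}$ is finite for every $Q_0$, membership in $E(Q,\Psi,\theta)$ is equivalent to: for every $Q_0$ there is $q\in Q$ with $|q|\ge Q_0$ and $\|qx-\theta\|\le\Psi(q)$. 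Fixing $N\in\NN$ and working inside $[-N,N]$ (which suffices, since $E(Q,\Psi,\theta)=\bigcup_N E(Q,\Psi,\theta)\cap[-N,N]$ and $H^{\eta}$ is countably subadditive), the number of $k\in\ZZ$ with $I_{q,k}\cap[-N,N]\ne\emptyset$ is at most $C_N|q|$ for a constant $C_N$ depending only on $N$.

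Now I would run the covering estimate. For every $Q_0\in\NN$,
\[
E(Q,\Psi,\theta)\cap[-N,N]\ \subseteq\ \bigcup_{\substack{q\in Q,\,q\ne 0\\ |q|\ge Q_0}}\ \bigcup_{\substack{k\in\ZZ\\ I_{q,k}\cap[-N,N]\ne\emptyset}} I_{q,k},
\]
and every interval on the right has diameter at most $2C/Q_0\to 0$ as $Q_0\to\infty$. Moreover
\begin{align*}
\sum_{\substack{q\in Q,\,q\ne 0\\ |q|\ge Q_0}}\ \sum_{\substack{k\in\ZZ\\ I_{q,k}\cap[-N,N]\ne\emptyset}}(\diam I_{q,k})^{\eta}
&\le \sum_{\substack{q\in Q,\,q\ne 0\\ |q|\ge Q_0}} C_N|q|\left(\frac{2\Psi(q)}{|q|}\right)^{\eta}\\
&\le 2^{\eta}C_N\sum_{\substack{q\in Q,\,q\ne 0\\ |q|\ge Q_0}}|q|\left(\frac{\Psi(q)}{|q|}\right)^{\eta}.
\end{align*}
As the full series over $q\ne 0$ converges, the right-hand side tends to $0$ as $Q_0\to\infty$. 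Hence $H^{\eta}\big(E(Q,\Psi,\theta)\cap[-N,N]\big)=0$, so $H^{\eta}(E(Q,\Psi,\theta))=0$ and $\dim_H E(Q,\Psi,\theta)\le\eta$. Letting $\eta\downarrow\eta(Q,\Psi)$ and combining with $\dim_H E(Q,\Psi,\theta)\le 1$ gives the claim.

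As for the main obstacle: there really is none of substance here, which matches the paper's description of this lemma as well known and straightforward. The only points that need a little care are (i) truncating to a bounded window $[-N,N]$, so that the number of relevant translates $I_{q,k}$ is finite and bounded by $C_N|q|$ (without this, a single $q$ already contributes infinitely many intervals of fixed diameter), and (ii) checking that the covering intervals have diameters shrinking uniformly to $0$ as $Q_0\to\infty$ — this is exactly where the standing hypothesis that $\Psi$ is bounded is used. Everything else is routine manipulation of a convergent series.
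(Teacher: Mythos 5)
Your proof is correct and follows essentially the same covering argument as the paper: both cover the limsup set by the intervals $I_{q,k}$ with $|q|$ large and sum the $\eta$-th powers of the diameters, using convergence of $\sum |q|(\Psi(q)/|q|)^{\eta}$ for $\eta > \eta(Q,\Psi)$. The only cosmetic difference is that the paper exploits integer-translation invariance of $E(Q,\Psi,\theta)$ to reduce to $[0,1]$, whereas you reduce to $[-N,N]$ via countable subadditivity of Hausdorff measure; the bookkeeping is otherwise the same.
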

\begin{proof}
Since $\dim_{H} E(Q,\Psi,\theta) \leq \dim_{H} \RR \leq 1$, we only need to prove
$$
\dim_{H} E(Q,\Psi,\theta) \leq \eta(Q,\Psi).
$$
Note $E(Q,\Psi,\theta)$ is invariant under translation by integers. Therefore
\begin{align*}
\dim_{H} E(Q,\Psi,\theta)
&=
\dim_{H} \bigcup_{k \in \ZZ} E(Q,\Psi,\theta) \cap ([0,1] + k) 
=
\dim_{H} \bigcup_{k \in \ZZ} (E(Q,\Psi,\theta) - k) \cap [0,1] \\
&=
\dim_{H} \bigcup_{k \in \ZZ} \dim_{H} E(Q,\Psi,\theta) \cap [0,1] 
=
\dim_{H} E(Q,\Psi,\theta) \cap [0,1].
\end{align*}
So it suffices to prove
$$
\dim_H E(Q,\Psi,\theta) \cap [0,1] \leq \eta(Q,\Psi).
$$
Therefore, according to the definition of Hausdorff dimension, it will suffice to show that for all $\epsilon > 0$ and all $\eta > \eta(Q,\Psi)$ there is a countable collection $\mathcal{I}$ of intervals that covers $E(Q,\Psi,\theta) \cap [0,1]$ and satisfies 
$$
\sum_{I \in \mathcal{I}} (\text{diam} (I))^{\eta} < \epsilon.
$$
Let $\epsilon > 0$ and $\eta > \eta(Q,\Psi)$. Define $C = |\theta| + \displaystyle{\sup_{q \in \ZZ}} \Psi(q)$. %(Remember that $\Psi$ is bounded.)
Observe that 
\begin{align*}
E(Q,\Psi,\theta) \cap [0,1]
%&= \bigcap_{N \in \NN} \bigcup_{\substack{ q \in Q \\ |q| \geq N}} \bigcup_{k \in \ZZ} \cbr{x \in [0,1] : |xq - \theta - k| \leq \Psi(q)} \\
&= \bigcap_{N \in \NN} \bigcup_{\substack{ q \in Q \\ |q| \geq N}} \bigcup_{\substack{k \in \ZZ \\ |k| \leq C + |q|}} \cbr{x \in [0,1] : |xq - \theta - k| \leq \Psi(q)}.
\end{align*}
Let $N \in \NN$. Then
\begin{align*}
E(Q,\Psi,\theta) \cap [0,1]
&\subseteq
\bigcup_{\substack{ q \in Q \\ |q| \geq N}} \bigcup_{\substack{k \in \ZZ \\ |k| \leq C + |q|}} \cbr{x \in [0,1] : |xq - \theta - k| \leq \Psi(q)} 
%\\
%&\subseteq \bigcup_{\substack{ q \in Q \\ |q| \geq N}} \bigcup_{\substack{k \in \ZZ \\ |k| \leq C + |q|}} [-\Psi(q)/|q| + \theta/q + k/q,\Psi(q)/|q| + \theta/q + k/q] \\
%&
\subseteq \bigcup_{\substack{ q \in Q \\ |q| \geq N}} \bigcup_{\substack{k \in \ZZ \\ |k| \leq C + |q|}} I_{q,k},
\end{align*}
where
$$
I_{q,k} = [(\theta + k - \Psi(q))/|q|, (\theta + k + \Psi(q))/|q|].
$$
We have
\begin{align*}
\sum_{\substack{ q \in Q \\ |q| \geq N}} \sum_{\substack{k \in \ZZ \\ |k| \leq C + |q|}} 
(\text{diam} (I_{q,k}))^{\eta}
&=
\sum_{\substack{ q \in Q \\ |q| \geq N}} \sum_{\substack{k \in \ZZ \\ |k| \leq C + |q|}} 2^{\eta}\rbr{\frac{\Psi(q)}{|q|}}^{\eta} \\
&\leq
\sum_{\substack{ q \in Q \\ |q| \geq N}} (2(C+|q|)+1)2^{\eta}\rbr{\frac{\Psi(q)}{|q|}}^{\eta} \\
&\lesssim
\sum_{\substack{ q \in Q \\ |q| \geq N}} |q|\rbr{\frac{\Psi(q)}{|q|}}^{\eta}.
\end{align*}
The last sum converges because $\eta > \eta(Q,\Psi)$. So, by taking $N$ sufficiently large, we can make the sum less than $\epsilon$.
\end{proof}

\section{Questions for Further Study}\label{Questions for Further Study}

In this section, we pose three questions that are interesting for future research.

What is the Fourier dimension of $E(Q,\Psi_{\tau},0)$ when 
$\nu(Q) 
%%%= \max \{\nu \in [0,1]: \sum_{q \in Q, q \neq 0} |q|^{-\nu} = \infty \} 
< 1$? For example, consider $Q$ as the set of squares (so that $\nu(Q) = 1/2$) or the set of powers of $2$ (so that $\nu(Q) = 0$). 
%Theorem \ref{main-thm-1} says nothing when $\nu(Q) < 1$.
We know the Fourier dimension is at most the Hausdorff dimension $\min\{(1+\nu(Q))/(1+\tau), 1\}$.  
And Theorem \ref{main-thm-1} implies the Fourier dimension is at least $\min\{2\nu(Q)/(1+\tau),1\}$. But when $\nu(Q) < 1$ the exact Fourier dimension is unknown.

%One natural conjecture is that $E(Q,\Psi_{\tau},0)$ is a Salem set, meaning its Fourier dimension is equal to its Hausdorff dimension $\min\{(1+\nu(Q))/(1+\tau), 1\}$.

What is the Fourier dimension of $E(m,n,\ZZ,\Psi_{\tau},0)$? Theorem \ref{main-thm-2} implies the Fourier dimension is at least $\min\{2n/(1+\tau),mn\}$. It is natural to conjecture that the Fourier dimension is exactly $\min\{2n/(1+\tau),mn\}$.  
%The verification of this conjecture would make $E(m,n,\ZZ,\Psi_{\tau},0)$ the first explicit example of a set in $\RR^{d}$ $(d \geq 2)$ with Fourier dimension strictly between $1$ and $d-1$. 
It is, perhaps, equally natural conjecture to that $E(m,n,\ZZ,\Psi_{\tau},0)$ is a Salem set, meaning its Fourier dimension is equal to its Hausdorff dimension $\min\{m(n-1) + (m+n))/(1+\tau), mn\}$. The verification of the latter conjecture would make $E(m,n,\ZZ,\Psi_{\tau},0)$ the first explicit example of a Salem set in $\RR^{d}$ $(d \geq 2)$ with dimension strictly between $1$ and $d-1$.

What is the Fourier dimension of $E(m,n,Q,\Psi,\theta)$ when no additional restrictions are placed on the parameters? This is the most general question and therefore the most challenging.

\section{Acknowledgements}\label{Acknowledgements}

The author thanks Izabella {\L}aba for her valuable feedback on this work. The authors thanks the anonymous referee for his/her many valuable comments, including for pointing out that Theorem \ref{mn-app} yields the first explicit examples of sets in $\RR^{d}$ $(d \geq 2)$ with Fourier dimension strictly between $1$ and $d-1$, as discussed in Section \ref{applications}.

%%%%%%%%%%%%%%%%%%%%%%%%%%%%%%%%%%%%%%%%%%%%%%%%%%%%%%%%%%%%%%%%%%%%%%%%%%%%%%%%%%%%%%%%%%%%%%%%55
%%%%%%%%%%%%%%%%%%%%%%%%%%%%%%%%%%%%%%%%%%%%%%%%%%%%%%%%%%%%%%%%%%%%%%%%%%%%%%%%%%%%%%%%%%%%%%%%%%
%%%%%%%%%%%%%%%%%%%%%%%%%%%%%%%%%%%%%%%%%%%%%%%%%%%%%%%%%%%%%%%%%%%%%%%%%%%%%%%%%%%%%%%%%%%%%%%%%%

\noindent {\sc Kyle Hambrook}, Department of Mathematics, University of British Columbia, Vancouver, BC, V6T1Z2 Canada
                                                                                    
\noindent \texttt{hambrook@math.ubc.ca}

\end{document}